\title{On the $\bpn$-cohomology of elementary abelian $p$-groups}
\author[Geoffrey Powell]{Geoffrey Powell}
\address{Laboratoire Angevin de Recherche en Mathématiques, UMR 6093, 
  Faculté des Sciences, Université d'Angers, 
2 Boulevard Lavoisier,
49045 Angers, France}
\email{Geoffrey.Powell@math.cnrs.fr}
\keywords{Brown-Peterson theory; Johnson-Wilson theories; Milnor primitive; elementary abelian group; formal group}
\subjclass[2000]{55N20; 55N22; 20J06}
\date{}
\thanks{}
\newtheorem{thm}{Theorem}[section]
\newtheorem{prop}[thm]{Proposition}
\newtheorem{cor}[thm]{Corollary}
\newtheorem{lem}[thm]{Lemma}
\theoremstyle{definition}
\newtheorem{defn}[thm]{Definition}
\theoremstyle{remark}
\newtheorem{rem}[thm]{Remark}
\newtheorem{nota}[thm]{Notation}
\newcommand{\double}{{\mathbf{\Psi}}}
\newcommand{\hq}{\mathscr{H}}
\newcommand{\vtor}[1][n]{\mathbf{tors}_{v_{#1}}}
\newcommand{\ploc}[1][p]{\zed_{(#1)}}
\newcommand{\bpn}[1][n]{BP\langle #1 \rangle}
\newcommand{\Strunc}[2]{\overline{S^{#1}_{#2}}}
\newcommand{\cali}{\mathscr{I}}
\newcommand{\nat}{\mathbb{N}}
\newcommand{\zed}{\mathbb{Z}}
\newcommand{\field}{\mathbb{F}}
\renewcommand{\phi}{\varphi}
\renewcommand{\epsilon}{\varepsilon}
\begin{document}

\begin{abstract}
The structure of the $\bpn$-cohomology of elementary abelian $p$-groups is
studied, obtaining a presentation expressed in terms of $BP$-cohomology and
mod-$p$ singular cohomology, using the Milnor derivations. 

The arguments are based on a result on multi-Koszul complexes which is related to 
Margolis's criterion for freeness of a graded module over an exterior algebra. 
\end{abstract}
\maketitle

\section{Introduction}
\label{sect:intro}

Understanding the generalized group cohomology of elementary abelian $p$-groups
for a cohomology theory $E^* (-)$ is of interest both as a first step towards
the study of generalized group cohomology, inspired in part by
the results of Quillen for singular cohomology, and also since Lannes' theory
\cite{Lannes} implies that it yields information on the $p$-local homotopy type
of the spaces of the $\Omega$-spectrum representing $E$. 

In studying the spectra of interest in chromatic homotopy theory, it is natural
to commence by the complex oriented theories. Here the state of knowledge is 
incomplete once one moves outside the cases admitting descriptions 
as formal schemes (see \cite{HKR}) or the classical cases corresponding 
to singular cohomology or the periodic Morava $K$-theories.

The universal example, complex cobordism $MU$, is of interest. For elementary 
abelian $p$-groups,  one can 
reduce to   Brown-Peterson theory, $BP$; this corresponds to working 
$p$-locally, hence 
restricting to $p$-typical formal 
 group laws. Landweber showed that $BP^* (BV)$, for $V$ an
elementary abelian $p$-group, can be described  in terms of the formal group
structure (the situation for Brown-Peterson homology is much more
complicated \cite{JW85,JWY94}).

Wilson \cite{WilI,WilII} introduced and studied  the theories $\bpn$, for $n \in
\nat$, which
interpolate between $BP=\bpn[\infty] $ and the mod $p$ Eilenberg-MacLane 
spectrum 
$H\field_p= \bpn[-1]$. These  provide a first step towards other theories of
significant interest in chromatic homotopy theory; moreover, they are 
important in understanding the $BP$-cohomology of Eilenberg-MacLane spaces (cf.
\cite{RWY}).

The cases $\bpn[-1] = H \field_p$,  $\bpn[0] = H \zed_{(p)}$ and $\bpn[1]$ are 
understood ($\bpn[1]$
identifies with the Adams summand of $p$-local connective complex $K$-theory).
Hitherto, for $n >1$,  results on $\bpn^* (BV)$ have concentrated on  low degree
or small rank behaviour; for example, Strickland \cite{Str} gave an analysis of
the first (in terms of the rank) occurrence of $v_n$-torsion in $\bpn^* (BV)$,
exhibiting a relationship between 
formal group theory and the action of the Milnor derivations on $H\field_p^* 
(BV)$. 

This paper shows that this is the tip of the iceberg: the Milnor derivations
explain all the $v_n$-torsion, without restriction on the rank of $V$. 
 The structure of $\bpn^* (BV)$ is determined in terms of the 
contribution from formal groups  obtained from $BP^* (BV)$ by base
change, and from mod-$p$ cohomology  $H\field_p^* (BV)$, considered as a module
over $\Lambda (Q_0, \ldots , Q_n)$. Namely, there is a short exact sequence 
\[
 0\rightarrow
L_n \hookrightarrow  \big(\bpn[n]^* \otimes_{BP^*} BP^* (BV)\big) \oplus
\vtor[n] \twoheadrightarrow \bpn[n]^*
(BV) \rightarrow 0,
\]
where the $v_n$-torsion $\vtor[n] \subset \bpn[n]^* (BV)$ is a trivial 
$\bpn[n]^*$-module, which is 
isomorphic to the image $\mathrm{Im} (Q_0
\ldots Q_n) \subset H\field_p^* (BV)$ of the iterated Milnor operation, and the 
kernel $L_n$ is identified explicitly 
 (see Theorem \ref{thm:eltab}).

This result is derived from the general result, Theorem \ref{thm:general}, for which the 
key input is the behaviour of the quotients (for $n \in \nat$):
 \[
  \hq^* (X,n) : = \big\{\bigcap _{i=0} ^n \mathrm{Ker} (Q_i) \big\} /
\mathrm{Im} (Q_0 \ldots Q_{n}).
 \]
 associated to the mod $p$-cohomology of a space $X$. The fundamental property is that 
the Thom reduction from $BP$ to mod-$p$ cohomology induces a surjection onto $\hq^* (X,n)$.

 The proof of this for the case $X =BV$ is a modification of Margolis's
criterion  \cite{margolis}  for a module over the exterior algebra $\Lambda
(Q_0, \ldots , Q_n)$ on the Milnor derivations $Q_i$ to be free; this
establishes a fundamental  property of the structure of  $H\field_p^* (BV)$ (see Theorem
\ref{thm:multikoszul}).

The argument can be generalized to the study of any $MU$-module spectrum which
is
constructed from $BP$ by forming the quotient by a cofinite subset of a 
suitable 
set 
of algebra generators $\{ v_i |
i \in \nat \}$ for $BP_*$ (where $v_0 =p$). For instance, the methods recover 
the author's results on
connective complex $K$-theory \cite{powell}; moreover, they also apply to
connective Morava $K$-theories, adding a useful perspective on existing results,
such as Kuhn's study of the periodic theory \cite{kuhn87} and the results of
Wilson on the Hopf ring of periodic Morava $K$-theory \cite{wil84}, and Hara,
on the Hopf ring of the connective theory \cite{hara}. 
 Similarly, the methods extend to the study of integral version of connective
Morava $K$-theory, generalizing the results for connective complex $K$-theory.
 For simplicity of exposition, these applications are not treated in the current
paper;  however, the main input is provided by Proposition
\ref{prop:multi-koszul}, which is proved in full generality.

 \bigskip
 {\bf Organization of the paper:}
 Section \ref{sect:prelim} provides background and Section
\ref{sect:thom_reduct} introduces the subquotient which bounds the indeterminacy
of the Thom reduction map in terms of the action
 of the Milnor primitives. Section \ref{sect:general} proves technical results
which control injectivity and surjectivity of certain reduction maps. The
fulcrum is Section \ref{sect:noeth}, which shows 
 how the $v_n$-torsion can be controlled in odd degrees under appropriate
hypotheses; Section \ref{sect:trivial_torsion_thm} exhibits the ramifications to
the full  $\bpn$-cohomology. Finally, in Section \ref{sect:eltab}, these
techniques are applied to the case of elementary abelian $p$-groups, proving the
Margolis-type vanishing result, which provides the necessary input.

\section{Preliminaries}
\label{sect:prelim}


\subsection{Torsion theories}
This section fixes notation and recalls a standard result on the relation
between 
torsion submodules and annihilator submodules.

Let $R$ be a commutative ring and $R[v]$ the polynomial algebra on $v$. For $M$
 an $R[v]$-module, the $v$-torsion submodule $\mathbf{tors}_vM$ is the set of
$v$-torsion elements $\{m \in M | \exists t \ v^t m=0 \}$ and $\mathrm{Ker}_vM$
is the kernel of multiplication by $v$, $M \stackrel{v}{\rightarrow} M $, so
that $\mathrm{Ker}_v M \cong \mathrm{Tor}^{R[v]} _1 (R, M)$  and 
$\mathrm{Ker}_v M \subset \mathbf{tors}_v M $. The $v$-cotorsion
$\mathbf{cotors}_v M$ is the quotient $M / \mathbf{tors}_v M$,
so that there is a natural short exact sequence 
\[
 0 
 \rightarrow 
 \mathbf{tors}_v M
 \rightarrow 
 M
 \rightarrow 
 \mathbf{cotors}_v M 
 \rightarrow 
 0.
\]
This is a standard example of a hereditary torsion theory.

The proof of the following is straightforward.

\begin{lem}
\label{lem:ker-tor}
For $M$ an $R[v]$-module, the following conditions are equivalent:
\begin{enumerate}
 \item 
 $\mathrm{Ker}_v M= \mathbf{tors}_v M$;
 \item
 $vM \cap \mathrm{Ker}_v M=0$;
 \item
 the projection $M \twoheadrightarrow M/ vM$ induces a monomorphism 
$\mathrm{Ker}_v M \hookrightarrow M/vM$.
\end{enumerate}
If these conditions are satisfied, there is a  short exact
sequence 
\[
 0 
 \rightarrow 
 \mathrm{Ker}_v M
 \rightarrow M/vM
 \rightarrow 
 (\mathbf{cotors}_v M)/ v
 \rightarrow 
 0.
\]
\end{lem}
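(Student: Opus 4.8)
The plan is to establish the cycle of implications $(1)\Rightarrow(2)\Rightarrow(3)\Rightarrow(1)$ and then to derive the concluding short exact sequence from the defining sequence $0 \to \mathbf{tors}_v M \to M \to \mathbf{cotors}_v M \to 0$.

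For $(1)\Rightarrow(2)$, given $x \in vM \cap \mathrm{Ker}_v M$, write $x = vy$; then $v^2 y = vx = 0$, so $y \in \mathbf{tors}_v M = \mathrm{Ker}_v M$ by hypothesis, whence $x = vy = 0$. For $(2)\Rightarrow(3)$, the kernel of the composite $\mathrm{Ker}_v M \hookrightarrow M \twoheadrightarrow M/vM$ is exactly $\mathrm{Ker}_v M \cap vM$, which is zero by (2). For $(3)\Rightarrow(1)$, the inclusion $\mathrm{Ker}_v M \subseteq \mathbf{tors}_v M$ is automatic, and for the reverse I would argue by induction on the least $t$ with $v^t x = 0$: the cases $t \le 1$ are immediate, while for $t \ge 2$ the element $v^{t-1}x$ lies in $\mathrm{Ker}_v M \cap vM$ and therefore maps to $0$ under the monomorphism of (3), forcing $v^{t-1}x = 0$ and reducing to a smaller exponent.

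For the final assertion, I would reduce the defining short exact sequence modulo $v$, i.e. apply $- \otimes_{R[v]} R$, obtaining the exact sequence $\mathrm{Tor}^{R[v]}_1(R,\mathbf{cotors}_v M) \to (\mathbf{tors}_v M)/v \to M/vM \to (\mathbf{cotors}_v M)/v \to 0$. The leftmost term is $\mathrm{Ker}_v(\mathbf{cotors}_v M)$, which vanishes since $\mathbf{cotors}_v M$ is $v$-torsion-free (if $v^t x \in \mathbf{tors}_v M$ then $x \in \mathbf{tors}_v M$); and under hypothesis (1) the module $\mathbf{tors}_v M = \mathrm{Ker}_v M$ is killed by $v$, so $(\mathbf{tors}_v M)/v = \mathrm{Ker}_v M$. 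This yields the displayed short exact sequence. Alternatively one can avoid $\mathrm{Tor}$ by checking directly that the kernel of $M/vM \twoheadrightarrow (\mathbf{cotors}_v M)/v$ is the image of $\mathbf{tors}_v M$ in $M/vM$, which by (1) and (3) is a faithful copy of $\mathrm{Ker}_v M$.

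Everything here is elementary; the only step demanding any care is the vanishing of the $\mathrm{Tor}$ term — equivalently, the $v$-torsion-freeness of $\mathbf{cotors}_v M$ — which is what lets the four-term sequence collapse to the three-term one claimed.
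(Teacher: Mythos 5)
Your proof is correct: the cycle $(1)\Rightarrow(2)\Rightarrow(3)\Rightarrow(1)$ is sound, and the derivation of the short exact sequence by applying $-\otimes_{R[v]}R$ to $0\to\mathbf{tors}_v M\to M\to\mathbf{cotors}_v M\to 0$, with the $\mathrm{Tor}_1$ term vanishing because $\mathbf{cotors}_v M$ is $v$-torsion-free, is exactly the standard argument. The paper omits the proof as ``straightforward,'' and your write-up supplies precisely the routine verification that was left to the reader.
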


\begin{rem}
 In the application, rings and modules are graded; as usual,  the appropriate
commutativity condition is graded commutativity (with Koszul signs). 
 However, where this intervenes, the rings are concentrated in even degrees, so
signs do not appear.
 \end{rem}

\subsection{The Wilson theories $\bpn$}

Fix a  prime $p$ and consider the Brown-Peterson spectrum $BP$ and the
associated Wilson spectra $\bpn$ (cf. \cite{WilII,Str,Tam}), equipped with
the reduction maps 
$$BP \stackrel{\rho_n}{\rightarrow} \bpn \stackrel{\rho^n_{n-1} }{\rightarrow}
\bpn[n-1],$$ 
which can be constructed in the category of $MU$-modules. The $\bpn$ can be
taken to be commutative $MU$-ring
spectra so that the reduction maps are morphisms of ring spectra \cite[Section
3]{Str}.  The coefficient rings are 
$BP_* \cong \ploc[p] [v_i  |i \geq 0]$, 
$\bpn_* \cong BP_*/ (v_i | i >n)$,   where  $|v_n| = 2 (p^n -1)$ and  $v_0 =p$,
by convention; thus $\bpn_* \cong  \ploc[p][v_1, \ldots, v_n]$ for $n\geq 1$.
In particular $\bpn[-1] = H \field_p$ and  $\bpn[0] = H \ploc$ are
Eilenberg-MacLane spectra.

Multiplication by $v_n$  fits into the cofibre sequence which defines $q_n$:
\begin{eqnarray}
\label{eqn:cofib_BPn}
 \Sigma^{|v_n|} \bpn \stackrel{v_n} {\rightarrow} 
 \bpn 
 \stackrel{\rho^n_{n-1}} {\rightarrow}
 \bpn[n-1] 
 \stackrel{q_n}{\rightarrow}
 \Sigma^{|v_n|+1} \bpn.
\end{eqnarray}
The following  is clear:

\begin{lem}
 \label{lem:q_vn-torsion}
 For $X$ a spectrum and $n \in \nat$, $q_n$ induces a map 
 \[
  \bpn[n-1] ^* (X) \stackrel{q_n}{\longrightarrow} \mathrm{Ker}(v_n) ^{*+
|v_n|+1} \subset \bpn ^{*+ |v_n|+1} (X).
 \]
\end{lem}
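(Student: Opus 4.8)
The plan is to read off the statement directly from the defining cofibre sequence \eqref{eqn:cofib_BPn} by applying the cohomology functor $[-,X]$ (equivalently, the long exact sequence in $\bpn$- and $\bpn[n-1]$-cohomology of $X$). Rotating \eqref{eqn:cofib_BPn} and taking $[X,-]_*$ yields, for each degree $*$, the exact sequence
\[
\bpn[n-1]^{*}(X) \xrightarrow{\ q_n\ } \bpn^{*+|v_n|+1}(X) \xrightarrow{\ v_n\ } \bpn^{*+2|v_n|+1}(X),
\]
where the second map is multiplication by $v_n$ (it is induced by $\Sigma^{|v_n|}\bpn \xrightarrow{v_n} \bpn$ after the appropriate suspension). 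Exactness at the middle term says precisely that the image of $q_n$ is the kernel of multiplication by $v_n$ on $\bpn^{*+|v_n|+1}(X)$, i.e. $\mathrm{Ker}(v_n)^{*+|v_n|+1}$ in the notation of the preliminaries.

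First I would fix the indexing: the cofibre sequence \eqref{eqn:cofib_BPn} gives a distinguished triangle, and applying $X^+ \wedge -$ (or mapping out of $X$) and taking homotopy groups produces a long exact sequence whose connecting map is exactly the one induced by $q_n$. The one point needing a word of care is the suspension bookkeeping, so that the target of $q_n$ lands in degree $*+|v_n|+1$ and the next map is honestly multiplication by $v_n$ rather than a shifted variant; this is immediate from the fact that the map labelled $v_n$ in \eqref{eqn:cofib_BPn} is, after desuspending, multiplication by the coefficient class $v_n \in \bpn_{|v_n|}$, together with naturality of the $\bpn_*$-module structure on $\bpn^*(X)$.

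Then I would simply invoke exactness: the composite $q_n$ followed by multiplication by $v_n$ is zero, so $q_n$ factors through $\mathrm{Ker}(v_n) \subset \bpn^{*+|v_n|+1}(X)$, which is the asserted map. (Surjectivity onto $\mathrm{Ker}(v_n)$ is not claimed in the statement and is not needed here, though it also follows from the same exact sequence.)

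There is essentially no obstacle: the result is, as the text says, clear — the only thing to get right is the degree shift and the identification of the relevant map in the long exact sequence with multiplication by $v_n$. I would therefore keep the proof to one or two sentences, pointing at \eqref{eqn:cofib_BPn} and the induced long exact sequence.
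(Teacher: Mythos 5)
Your argument is correct and is exactly the one the paper intends (the paper gives no proof, simply declaring the lemma clear): rotate the cofibre sequence (\ref{eqn:cofib_BPn}), take the induced long exact sequence in cohomology of $X$, and use exactness at the middle term to see that the image of $q_n$ lies in $\mathrm{Ker}(v_n)$. One minor bookkeeping slip: multiplication by $v_n\in\bpn_{|v_n|}$ \emph{lowers} cohomological degree, so the third term of your exact sequence should be $\bpn^{*+1}(X)$ rather than $\bpn^{*+2|v_n|+1}(X)$; this does not affect the conclusion.
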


The composite $\rho_n q_n : \bpn[n-1] \rightarrow \Sigma^{|v_n|+1} \bpn[n-1]$ is
a derivation (cf.  \cite[Section 3]{Str}). More generally, as in  {\em loc.
cit.}, one considers the derivation 
induced for $MU$-modules by the derivation $MU/v_n  \rightarrow \Sigma^{|v_n|+1}
MU/v_{n}$, which provides compatibility; 
 the operation on $H\field_p$ coincides  with the Milnor derivation $Q_n$ (up to
sign), by \cite[Proposition 3.1]{Str}.

This compatibility  implies the following (cf. 
\cite[Proposition 4-4]{Tam}):

\begin{lem}
 \label{lem:Q_compatible}
 For $n \in \nat$, the following diagram commutes
 \[
  \xymatrix{
  \bpn
  \ar[r]^(.35){q_{n+1}} 
  \ar[d]_{\rho^n_{-1}}
  &
  \Sigma^{|Q_{n+1}|} 
  \bpn[n+1]
  \ar[d]^{\rho^{n+1}_{-1}}
  \\
  H\field_p
  \ar[r]_(.4){\pm Q_{n+1}}
  &
  \Sigma^{|Q_{n+1}|}H\field_p.
  }
 \]
 Hence, (up to possible sign) the composite $Q_n \ldots Q_0 : H \field_p
\rightarrow 
 \Sigma^{\sum |Q_i| } H\field_p$ factors across $\rho^n _{-1}$ as 
 \[
  \xymatrix{
  H\field_p 
  \ar[rr]^(.4){q_n\ldots q_0} 
  &&
  \Sigma^{\sum |Q_i|} \bpn 
  \ar[r]^{\rho^n _{-1}}
  &
  \Sigma^{\sum |Q_i| } H\field_p.
  }
 \]
\end{lem}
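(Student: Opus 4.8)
The plan is to deduce the commuting square from the naturality of the $v_{n+1}$-Bockstein recalled just before the statement, and then to obtain the factorization of the iterated Milnor operation by induction on $n$. By definition $q_{n+1}$ is the connecting map of the cofibre sequence \eqref{eqn:cofib_BPn} with $n$ replaced by $n+1$; thus $\bpn[n]\simeq\bpn[n+1]/v_{n+1}$ and $q_{n+1}$ is the associated $v_{n+1}$-Bockstein. As in \cite[Section~3]{Str}, this Bockstein is obtained by tensoring the $MU$-module $\bpn[n+1]$ with the $v_{n+1}$-cofibre sequence of $MU$, hence is natural for morphisms of $MU$-modules compatible with multiplication by $v_{n+1}$. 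I would apply this naturality to the reduction $g\colon\bpn[n+1]\to R$, where $R:=\bpn[n+1]/(v_0,\ldots,v_n)$ is the further $MU$-module quotient, with $R_* = \field_p[v_{n+1}]$, on which $v_{n+1}$ is a non-zero-divisor (constructible in $MU$-modules as an iterated cofibre). Then $R/v_{n+1}\simeq H\field_p$, and by \cite[Proposition~3.1]{Str} — precisely the identification recalled above — the $v_{n+1}$-Bockstein derivation induced on $R/v_{n+1}$ is $\pm Q_{n+1}$. Naturality yields a map of cofibre sequences from \eqref{eqn:cofib_BPn} (index $n+1$) to the analogous sequence $\Sigma^{|v_{n+1}|}R\xrightarrow{v_{n+1}}R\to R/v_{n+1}\xrightarrow{\beta_R}\Sigma^{|v_{n+1}|+1}R$ for $R$, and in particular the square
\[
\xymatrix{
\bpn[n] \ar[r]^(.4){q_{n+1}} \ar[d]_{\bar g} & \Sigma^{|Q_{n+1}|}\bpn[n+1] \ar[d]^{\Sigma^{|Q_{n+1}|}g} \\
H\field_p \ar[r]_(.55){\beta_R} & \Sigma^{|Q_{n+1}|}R
}
\]
commutes, where the induced left-hand map $\bar g\colon\bpn[n]=\bpn[n+1]/v_{n+1}\to R/v_{n+1}=H\field_p$ is exactly $\rho^n_{-1}$ (both kill precisely $v_0,\ldots,v_n$). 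Post-composing the right-hand column with $\Sigma^{|Q_{n+1}|}\pi_R$, where $\pi_R\colon R\to R/v_{n+1}=H\field_p$ is the quotient, turns the right edge into $\Sigma^{|Q_{n+1}|}\rho^{n+1}_{-1}$ (since $\pi_R g=\rho^{n+1}_{-1}$) and the bottom composite $(\Sigma^{|Q_{n+1}|}\pi_R)\circ\beta_R$ into the induced derivation $\pm Q_{n+1}$; this is exactly the square in the statement.

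For the factorization I would induct on $n$, the base case $n=0$ being the standard factorization $\rho^0_{-1}\circ q_0=\pm Q_0$ of the mod-$p$ Bockstein through the integral one (equally, the argument above with $R=\bpn[0]=H\ploc$ and $v_0=p$). For the inductive step, write $a=\sum_{i=0}^{n}|Q_i|$; applying $\Sigma^{a}(-)$ to the square just established and precomposing with the inductive identity $\rho^n_{-1}\circ(q_n\ldots q_0)=\pm Q_n\ldots Q_0\colon H\field_p\to\Sigma^{a}H\field_p$, functoriality of $\Sigma^{a}$ and of composition give
\[
\rho^{n+1}_{-1}\circ(q_{n+1}q_n\ldots q_0)=\pm Q_{n+1}\circ(\pm Q_n\ldots Q_0)=\pm Q_{n+1}Q_n\ldots Q_0,
\]
with total suspension $a+|Q_{n+1}|=\sum_{i=0}^{n+1}|Q_i|$, as required.

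The single genuinely nontrivial input — that the operation induced on $H\field_p$ by the $MU$-module derivation of $MU/v_{n+1}$ is $\pm Q_{n+1}$ — is \cite[Proposition~3.1]{Str}, already available; everything else is bookkeeping, namely constructing the $MU$-linear reduction $g\colon\bpn[n+1]\to R$, identifying the induced quotient maps ($\bar g=\rho^n_{-1}$ and $\pi_R g=\rho^{n+1}_{-1}$), and keeping track of suspensions and of the unspecified signs through the iteration. The only point requiring genuine care is that one must route through $R$, on which $v_{n+1}$ is a non-zero-divisor, rather than tensoring directly with $H\field_p$: since $v_{n+1}$ acts as zero on $H\field_p$, the spectrum $H\field_p/v_{n+1}$ splits and its Bockstein derivation is nilpotent, so a naive direct comparison would produce the wrong operation.
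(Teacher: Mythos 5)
Your proof is correct and follows essentially the same route the paper intends: the lemma is asserted there as a consequence of the naturality of the $MU$-module derivation on $MU/v_{n+1}$ together with Strickland's identification of the induced operation on $H\field_p$ as $\pm Q_{n+1}$, which is exactly the compatibility you make explicit by routing through the connective Morava $K$-theory $\bpn[n+1]/(v_0,\ldots,v_n)$. Your remark on why one must pass through a spectrum on which $v_{n+1}$ is a non-zero-divisor is a worthwhile point of care that the paper leaves implicit.
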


When considering the $\bpn$-cohomology of a space, the following  can be 
applied.

\begin{prop}
 \label{prop:BPodd0_v_n-torsion}
 For $X$ a space such that $BP ^{\mathrm{odd}} (X)=0$ and $n \in \nat$,
$\bpn^{\mathrm{odd}} (X)$
is $v_i$-torsion for $0\leq i \leq n$. 
\end{prop}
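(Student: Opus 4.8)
The plan is to induct on $n$, using the cofibre sequences \eqref{eqn:cofib_BPn} to climb from $\bpn[-1] = H\field_p$ (or from a convenient base case) up to $\bpn$. The base of the induction is immediate: for $n = -1$ one has $\bpn[-1] = H\field_p$, and $BP^{\mathrm{odd}}(X) = 0$ together with the (collapsing) Atiyah--Hirzebruch-type comparison forces $H\field_p$ to be concentrated in the ``right'' parities only after we know what we want; more robustly, I would take the genuine base case to be the observation that, since $BP^{\mathrm{odd}}(X) = 0$, the long exact sequence associated to $\Sigma^{|v_0|}BP \xrightarrow{v_0=p} BP \to \bpn[0] = H\ploc \to \cdots$ shows $\bpn[0]^{\mathrm{odd}}(X)$ is a quotient of $BP^{\mathrm{even}}(X)/p \hookrightarrow \cdots$; in fact the relevant thing is simply that $\bpn[0]^{\mathrm{odd}}(X) = H\ploc^{\mathrm{odd}}(X)$ receives the map $q_0$ from $BP$ and is killed by $p = v_0$, hence is $v_0$-torsion.

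For the inductive step, suppose the statement holds for $\bpn[n-1]$, i.e.\ $\bpn[n-1]^{\mathrm{odd}}(X)$ is $v_i$-torsion for $0 \le i \le n-1$. Apply $(-)^*(X)$ to the cofibre sequence
\[
 \Sigma^{|v_n|}\bpn \xrightarrow{\ v_n\ } \bpn \xrightarrow{\ \rho^n_{n-1}\ } \bpn[n-1] \xrightarrow{\ q_n\ } \Sigma^{|v_n|+1}\bpn
\]
to obtain the long exact sequence
\[
 \cdots \to \bpn^{*}(X) \xrightarrow{v_n} \bpn^{*+|v_n|}(X) \to \bpn[n-1]^{*+|v_n|}(X) \xrightarrow{q_n} \bpn^{*+|v_n|+1}(X) \to \cdots .
\]
Taking $*$ odd (so that, since $|v_n| = 2(p^n-1)$ is even, all three terms are in odd total degree) gives a surjection $\bpn[n-1]^{\mathrm{odd}}(X) \twoheadrightarrow \mathrm{Coker}(v_n \colon \bpn^{\mathrm{odd}}(X) \to \bpn^{\mathrm{odd}}(X))$, and dually an injection of $\mathrm{Ker}(v_n)$ in odd degrees into $\bpn[n-1]^{\mathrm{odd}}(X)$ via Lemma \ref{lem:q_vn-torsion}. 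The first consequence is that $\bpn^{\mathrm{odd}}(X)$ is $v_n$-torsion: indeed $\bpn^{\mathrm{odd}}(X)$ is a finitely-generated-in-each-degree $\bpn_*$-module (or, more carefully, one argues degreewise — in each fixed degree the group $\bpn^{d}(X)$ for $d$ odd is hit, after multiplication by a power of $v_n$, inside a bounded range, using that $\mathrm{Coker}(v_n)$ in odd degrees is a subquotient of $\bpn[n-1]^{\mathrm{odd}}(X)$, which by induction is $v_n$-... wait, no: $v_n$ acts as zero on $\bpn[n-1]^*$). Let me restate: since $v_n = 0$ on $\bpn[n-1]^*(X)$, the surjection $\bpn[n-1]^{\mathrm{odd}}(X) \twoheadrightarrow \mathrm{Coker}(v_n)^{\mathrm{odd}}$ shows $v_n$ acts as zero on $\mathrm{Coker}(v_n \colon \bpn^{\mathrm{odd}} \to \bpn^{\mathrm{odd}})$, i.e.\ $v_n^2 \bpn^{\mathrm{odd}}(X) = v_n \bpn^{\mathrm{odd}}(X)$; iterating and invoking connectivity/finiteness of $\bpn^d(X)$ in each degree to make the descending chain $v_n^t\bpn^{\mathrm{odd}}(X)$ stabilize, one concludes that this stable submodule is $v_n$-divisible and, again by the bounded-below structure over $\bpn_*$, must vanish — so $\bpn^{\mathrm{odd}}(X)$ is $v_n$-torsion.

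It remains to see that $\bpn^{\mathrm{odd}}(X)$ is $v_i$-torsion for $0 \le i \le n-1$ as well. For this I would use that $\mathrm{Ker}(v_n) \subset \bpn^{\mathrm{odd}}(X)$ in odd degrees injects into $\bpn[n-1]^{\mathrm{odd}}(X)$, which by the inductive hypothesis is $v_i$-torsion for $i \le n-1$; a diagram chase along the same long exact sequence (filtering $\bpn^{\mathrm{odd}}(X)$ by the $v_n$-adic filtration, whose finite length we just established, with successive quotients receiving compatible maps to $\bpn[n-1]^{\mathrm{odd}}(X)$ under $\rho^n_{n-1}$, which commutes with multiplication by $v_i$) propagates $v_i$-torsion from the associated graded back to $\bpn^{\mathrm{odd}}(X)$. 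The main obstacle is the finiteness/boundedness bookkeeping needed to turn ``$v_n$ acts nilpotently modulo its own image'' into ``everything is $v_n$-torsion'' — i.e.\ making precise that each $\bpn^{d}(X)$ is a bounded-below, degreewise-finitely-generated $\bpn_*$-module so that the descending $v_n$-adic filtration terminates; once that is in hand, the long exact sequence and induction do the rest essentially formally.
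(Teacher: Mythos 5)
There is a genuine gap, and it is located exactly where your argument tries to be purely formal. The paper's proof is not an induction over the tower: its entire content is the external input \cite[Corollary 5.6]{WilII}, which says that $BP^t(X)\to\bpn^t(X)$ is surjective for $t\le 2(p^n-1)/(p-1)$, whence $\bpn^t(X)=0$ for all odd $t$ in that range (in particular for all sufficiently negative odd $t$); since multiplication by $v_i$ ($i\ge 1$) \emph{lowers} cohomological degree by $2(p^i-1)>0$, a large power of $v_i$ pushes any odd-degree class into the vanishing range. Your argument never invokes this (the hypothesis $BP^{\mathrm{odd}}(X)=0$ essentially does not enter your inductive step), and the point where you need it is precisely the assertion that the $v_n$-divisible submodule ``must vanish by the bounded-below structure''. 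That is not a formal fact: for a space such as $BV$, $\bpn^*(X)$ is nonzero in arbitrarily negative cohomological degrees (e.g.\ $v_n^k x$ for $x\in\bpn^0(X)$), so there is no bounded-below grading to appeal to, and the vanishing of the \emph{odd} part in low degrees is exactly what Wilson's theorem plus $BP^{\mathrm{odd}}(X)=0$ buys you. Without it, your chain $v_nM=v_n^2M=\cdots$ (which indeed stabilizes immediately, since $v_n$ kills $\bpn[n-1]^*(X)$ and hence the cokernel of $v_n$ --- note, incidentally, that this cokernel is a \emph{sub}module of $\bpn[n-1]^*(X)$ via $\rho^n_{n-1}$, not a quotient as you wrote) only tells you that $v_nM$ is $v_n$-divisible, and you cannot conclude it is zero.

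Two further problems. First, your base case rests on the false cofibre sequence $BP\xrightarrow{p}BP\to\bpn[0]$: the cofibre of $p$ on $BP$ is $BP/v_0$, whose homotopy is $\field_p[v_1,v_2,\ldots]$, not $H\ploc=\bpn[0]=BP/(v_1,v_2,\ldots)$; so the claim that $H\ploc^{\mathrm{odd}}(X)$ is killed by $p$ has no proof here. This matters because $v_0=p$ has degree $0$, so the degree-shifting argument that handles $i\ge 1$ says nothing about $v_0$, and the $p$-torsion statement genuinely requires an argument tied to the hypothesis $BP^{\mathrm{odd}}(X)=0$. Second, the final ``propagation'' of $v_i$-torsion ($i<n$) through the $v_n$-adic filtration does not close up as stated: from $v_i^{a}x=v_nx_1$, $v_i^{a_1}x_1=v_nx_2$, etc., one only gets $v_i^{\sum a_j}x\in v_n^k\bpn^{\mathrm{odd}}(X)$ for all $k$, which returns you to the unresolved divisibility question unless you already have a \emph{uniform} bound $v_n^N\bpn^{\mathrm{odd}}(X)=0$ (that uniform bound is what Proposition \ref{prop:noeth} later extracts from a Noetherian hypothesis, which is not available here). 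The direct degree-shift argument for each $i\ge 1$ avoids all of this.
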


\begin{proof}
 \cite[Corollary 5.6]{WilII} shows that the reduction map $(\rho_{n})^t : 
BP^t(X) \rightarrow \bpn^t
(X)$ is surjective for $t\leq 2 \big( \frac{p^n-1}{p-1} \big)$, hence $ \bpn^t
(X)$ is zero in this range. The result  is a straightforward consequence.
\end{proof}

The condition $\bpn[n-1]^{\mathrm{odd}}(X)=0$ arises naturally at the start of the 
inductive arguments; the following  observation records its immediate 
ramifications. 

\begin{prop}
 \label{prop:v_n-torsion}
 For $X$ a spectrum and $0<n \in \nat$ such that $\bpn[n-1]^{\mathrm{odd}}(X)=0$, the
following properties hold:
 \begin{enumerate}
  \item 
  $\vtor[n]^{\mathrm{even}}=0$, where $\vtor[n]\subset \bpn[n]^* (X)$;
  \item
  if $\bpn[n]^{\mathrm{odd}}(X)$ contains no $v_{n}$-divisible elements,
then $$\bpn[n]^{\mathrm{odd}}(X) =0.$$
 \end{enumerate}
 \end{prop}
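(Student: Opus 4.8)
The plan is to extract everything from the long exact sequence obtained by applying $[X,-]$ to the cofibre sequence (\ref{eqn:cofib_BPn}) at index $n$; for each $k$ this is a sequence of $\bpn[n]^*$-modules
\[
\cdots \to \bpn[n]^{k+|v_n|}(X) \xrightarrow{v_n} \bpn[n]^{k}(X) \xrightarrow{\rho^n_{n-1}} \bpn[n-1]^{k}(X) \xrightarrow{q_n} \bpn[n]^{k+|v_n|+1}(X) \to \cdots,
\]
in which the connecting map $q_n$ is the operation of Lemma \ref{lem:q_vn-torsion}. The only thing to watch is parity: $|v_n| = 2(p^n-1)$ is even, so multiplication by $v_n$ and the reduction $\rho^n_{n-1}$ preserve the parity of the cohomological degree, whereas $q_n$ raises degree by $|v_n|+1$ and hence reverses it. Since $\bpn[n]^*$ is concentrated in even degrees, $\bpn[n]^*(X)$ is the direct sum of its even and odd parts as a $\bpn[n]^*$-module, each stable under multiplication by $v_n$, and $\vtor[n]$ splits accordingly; so the two parities may be treated separately.

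For (1) I would use exactness at $\bpn[n]^{k+|v_n|+1}(X)$ (the subsequent map being $v_n\colon \bpn[n]^{k+|v_n|+1}(X)\to \bpn[n]^{k+1}(X)$): this identifies the kernel of that $v_n$ with the image of $q_n\colon \bpn[n-1]^{k}(X) \to \bpn[n]^{k+|v_n|+1}(X)$. Taking $k$ odd kills the source by hypothesis, while $k+|v_n|+1$ then runs through all even integers; hence multiplication by $v_n$ is injective on $\bpn[n]^{\mathrm{even}}(X)$. A graded module on which $v_n$ acts injectively has no nonzero $v_n$-torsion (iterate injectivity, or invoke condition (1) of Lemma \ref{lem:ker-tor} since $\mathrm{Ker}_{v_n}=0$), so $\vtor[n]^{\mathrm{even}} = 0$.

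For (2) I would instead use exactness at $\bpn[n]^{k}(X)$ with $k$ odd: then $\bpn[n-1]^{k}(X) = 0$, so $\rho^n_{n-1}$ vanishes on $\bpn[n]^{k}(X)$ and therefore $v_n\colon \bpn[n]^{k+|v_n|}(X) \to \bpn[n]^{k}(X)$ is surjective. Letting $k$ range over the odd integers gives $v_n\,\bpn[n]^{\mathrm{odd}}(X) = \bpn[n]^{\mathrm{odd}}(X)$, hence by iteration $v_n^{t}\,\bpn[n]^{\mathrm{odd}}(X) = \bpn[n]^{\mathrm{odd}}(X)$ for every $t$. Thus every element of $\bpn[n]^{\mathrm{odd}}(X)$ is $v_n$-divisible (indeed infinitely so), and the hypothesis that there are no nonzero such elements forces $\bpn[n]^{\mathrm{odd}}(X) = 0$.

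I do not expect a genuine obstacle here: both parts are short diagram chases, and the only point requiring care is the parity accounting in $q_n$ — checking that, as $k$ runs over one parity class, the shifted index $k+|v_n|$ (resp. $k+|v_n|+1$) sweeps out exactly the parity class of cohomological degrees one wants. The one step worth spelling out, rather than a difficulty, is the elementary passage from injectivity of $v_n$ to the vanishing of the whole $v_n$-torsion submodule $\vtor[n]^{\mathrm{even}}$, since the exact sequence a priori controls only $\mathrm{Ker}_{v_n}$.
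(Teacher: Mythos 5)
Your proof is correct and follows essentially the same route as the paper: both parts are read off from the long exact sequence of the cofibre sequence (\ref{eqn:cofib_BPn}), using the parity shift of $q_n$ to see that the vanishing of $\bpn[n-1]^{\mathrm{odd}}(X)$ forces $v_n$ to be injective in even degrees and surjective onto odd degrees. The paper's proof is just a terser version of the same diagram chase, and your explicit passage from $\mathrm{Ker}_{v_n}^{\mathrm{even}}=0$ to $\vtor[n]^{\mathrm{even}}=0$ is a welcome elaboration of a step the paper leaves implicit.
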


\begin{proof}
 The result follows from the long exact sequence associated to the cofibre 
sequence (\ref{eqn:cofib_BPn}). For example, 
the hypothesis $\bpn[n-1]^{\mathrm{odd}}(X)=0$ implies that 
 any element of $\bpn[n]^{\mathrm{odd}}(X)$ is the image of an odd degree 
element under multiplication 
 by $v_{n}$; repeating the argument, any such element is (infinitely) 
$v_{n}$-divisible.
 \end{proof}

\section{The image of the Thom reduction}
\label{sect:thom_reduct}

The image in cohomology of the Thom reduction map $BP \rightarrow H \zed_{(p)}$
is of
significant interest in general (see
\cite{tam_BPimage}, for example);  
here we consider the image of 
$ 
\rho^n_{-1} : \bpn \rightarrow H\field_p
$ 
and its relation with the action of the Milnor derivations $Q_i$ on mod-$p$
cohomology.

The following is well-known; a proof is included for the convenience of the
reader.

\begin{prop}
\label{prop:indeterminacy}
For $X$ a spectrum and $n \in \nat$, the reduction map $\rho^n_{-1}$ induces a
map of $\bpn^*$-modules: $\rho^n_{-1} :
\bpn^* (X) \rightarrow H\field_p^* (X)$ such that 
\[
 \mathrm{Im}(Q_0 \ldots Q_{n})
 \subset 
 \mathrm{Image} (\rho^n_{-1})
 \subset \bigcap _{i=0} ^n \mathrm{Ker} (Q_i).
\]
\end{prop}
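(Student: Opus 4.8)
The plan is to treat the two inclusions separately, in each case reducing the assertion to an identity between maps of spectra and then feeding in Lemma \ref{lem:Q_compatible} together with the cofibre sequence (\ref{eqn:cofib_BPn}). That $\rho^n_{-1}$ induces a map of $\bpn^*$-modules $\bpn^*(X) \to H\field_p^*(X)$ is immediate, since $\rho^n_{-1}$ is a morphism of ring spectra, so $H\field_p$ acquires the structure of a $\bpn$-algebra and the induced map on $X$-cohomology is $\bpn^*$-linear.

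For the inclusion $\mathrm{Im}(Q_0 \ldots Q_{n}) \subseteq \mathrm{Image}(\rho^n_{-1})$, I would invoke the factorization supplied by Lemma \ref{lem:Q_compatible}: the iterated operation $Q_n \ldots Q_0$ on $H\field_p$ equals $\rho^n_{-1} \circ (q_n \ldots q_0)$, with $q_n \ldots q_0 : H\field_p \to \Sigma^{\sum |Q_i|} \bpn$. Passing to $X$-cohomology, the operation $Q_n \ldots Q_0 : H\field_p^*(X) \to H\field_p^{*+\sum|Q_i|}(X)$ factors through $\rho^n_{-1} : \bpn^{*+\sum|Q_i|}(X) \to H\field_p^{*+\sum|Q_i|}(X)$, so $\mathrm{Im}(Q_n \ldots Q_0) \subseteq \mathrm{Image}(\rho^n_{-1})$; and since the $Q_i$ anticommute in the Steenrod algebra one has $\mathrm{Im}(Q_0 \ldots Q_n) = \mathrm{Im}(Q_n \ldots Q_0)$, giving the claim.

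For the inclusion $\mathrm{Image}(\rho^n_{-1}) \subseteq \bigcap_{i=0}^n \mathrm{Ker}(Q_i)$, it is enough to show, for each $i$ with $0 \le i \le n$, that the composite of spectra $\bpn \xrightarrow{\rho^n_{-1}} H\field_p \xrightarrow{Q_i} \Sigma^{|Q_i|} H\field_p$ is null. For $1 \le i \le n$, I would write $\rho^n_{-1}$ as the composite $\bpn \xrightarrow{\rho^n_{i}} \bpn[i] \xrightarrow{\rho^i_{i-1}} \bpn[i-1] \xrightarrow{\rho^{i-1}_{-1}} H\field_p$, then apply Lemma \ref{lem:Q_compatible} (with its ``$n+1$'' taken to be $i$) to get $Q_i \circ \rho^{i-1}_{-1} = \pm\, \rho^i_{-1} \circ q_i$ for $q_i : \bpn[i-1] \to \Sigma^{|Q_i|}\bpn[i]$; hence $Q_i \circ \rho^n_{-1} = \pm\, \rho^i_{-1} \circ q_i \circ \rho^i_{i-1} \circ \rho^n_i$, which vanishes because $q_i \circ \rho^i_{i-1} = 0$ (two consecutive maps in the cofibre sequence (\ref{eqn:cofib_BPn})). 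The case $i = 0$ is equally elementary: $\rho^n_{-1}$ factors through $\bpn[0] = H\ploc$, and the image of reduction mod $p$, $H\ploc^*(X) \to H\field_p^*(X)$, is annihilated by the Bockstein $Q_0$.

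The only real work is the bookkeeping with the tower of reduction maps $\rho^m_{m-1}$ and the ensuing signs; there is no essential obstacle, because Lemma \ref{lem:Q_compatible} already encodes the compatibility between the connecting maps $q_m$, the Milnor derivations $Q_m$, and the reductions $\rho^m_{-1}$. The one point requiring a separate (but trivial) treatment is the degenerate index $i=0$, where $Q_0$ is the Bockstein rather than one of the derivations appearing directly in the diagram of Lemma \ref{lem:Q_compatible}.
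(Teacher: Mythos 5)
Your proposal is correct and is essentially the paper's argument: the lower bound comes from the factorization of $Q_0\ldots Q_n$ through $\rho^n_{-1}$ in Lemma \ref{lem:Q_compatible}, and the upper bound from combining the compatibility square $Q_i\circ\rho^{i-1}_{-1}=\pm\,\rho^i_{-1}\circ q_i$ with the vanishing of the consecutive composite $q_i\circ\rho^i_{i-1}$ in the cofibre sequence (\ref{eqn:cofib_BPn}). The only cosmetic difference is that you verify $\mathrm{Ker}(Q_i)$ for each index $i$ separately (treating $i=0$ via the Bockstein) where the paper runs a single induction on $n$, reducing at each stage to the case of $Q_n$ alone.
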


\begin{proof}
The inclusion $\mathrm{Im}(Q_0 \ldots Q_{n})
 \subset 
 \mathrm{Image} (\rho^n_{-1})$ is a consequence of the factorization of $Q_0
\ldots Q_{n}$ across $\rho^n_{-1}$, given by Lemma \ref{lem:Q_compatible}.

For the upper bound, since $\rho^n _{-1} = \rho^{n-1}_{-1} \rho^n_{n-1}$, it
suffices to show that 
$ \mathrm{Image} (\rho^n_{-1})\subset \mathrm{Ker}(Q_n)$; this follows from
the commutative
diagram
\[
 \xymatrix{
\bpn^* (X) 
\ar[r]_{\rho^n_{n-1}}
\ar[rd]_{\rho^n_{-1}}
\ar@/^1pc/[rr]^0
&
\bpn[n-1]^* (X)
\ar[d]^{\rho^{n-1}_{-1}}
\ar[r]_{q_n}
&
\bpn^{*+|Q_n|}(X)
\ar[d]^{\rho^n_{-1}}
\\
&
H\field_p^* (X) 
\ar[r]_{\pm Q_n}
&
H\field_p^{*+|Q_n|} (X),
}
\]
where the commutative square is provided by  Lemma \ref{lem:Q_compatible}.
 \end{proof}

\begin{nota}
 For $X$ a spectrum and $n \in \nat$, let $\hq ^*(X,n)$ denote the graded
subquotient of $H\field_p^* (X)$ 
 \[
  \hq^* (X,n) : = \big\{\bigcap _{i=0} ^n \mathrm{Ker} (Q_i) \big\} /
\mathrm{Im} (Q_0 \ldots Q_{n}).
 \]
\end{nota}

 \begin{rem}
 Proposition \ref{prop:indeterminacy} shows that $\hq^* (X,n)$ bounds the
indeterminacy
of the image of $\rho^n_{-1}$. In particular, if $\hq^t
(X,n)=0$, then $\mathrm{Image} (\rho^n_{-1})^t = \mathrm{Im}(Q_0 \ldots
Q_{n})^t$.
 \end{rem}

\begin{rem}
\label{rem:free_trivial_hq}
For $M$  a graded module over  the exterior algebra $\Lambda (Q_0, \ldots,
Q_n)$, 
$\bigcap _{i=0}^n \mathrm{Ker} (Q_i) \subset M$ identifies with the socle
$\mathrm{soc}(M)$ of $M$.
If
$M$ is bounded below and of finite
type, it can be written as $M \cong F \oplus \overline{M}$, where $F$ is a free
$\Lambda (Q_i| 0 \leq i \leq n)$-module and $\overline{M}$ contains no free
sub-module (see \cite{margolis}, for example).
 The inclusion $\mathrm{Im} (Q_0\ldots Q_n) \subset \bigcap _{i=0}^n
\mathrm{Ker} (Q_i)$ corresponds to the
 inclusion $\mathrm{soc} (F) \hookrightarrow \mathrm{soc}(M)$ and the quotient
 identifies with $\mathrm{soc} (\overline{M})$. 

Hence, $\hq^* (X,n)$ gives a measure of the failure of $H\field_p^* (X)$ to be
free as an $\Lambda (Q_0, \ldots , Q_n)$-module, when $X$ is a connective
spectrum with cohomology of finite type.

At the opposite extreme, if $Q_0, \ldots , Q_n$ act trivially upon  $ 
H\field_p^* (X)$ (for example, if the latter 
is concentrated in even degrees), then there is an identification  $\hq^* (X,n) 
\cong H\field_p^* (X)$.
\end{rem}

By Proposition \ref{prop:indeterminacy}, $\rho^n_{-1}$
maps to $\bigcap
_{i=0}^{n}\mathrm{Ker}(Q_i) \subset H\field_p^* (X)$, hence induces a map to
$\hq^* (X,n)$.

\begin{cor}
\label{cor:surjectivity}
For $X$  a spectrum and $n\in \nat$, 
 $\rho^n_{-1} : \bpn^* (X) \rightarrow H\field_p^* (X)$ surjects to
$\bigcap
_{i=0}^{n}\mathrm{Ker}(Q_i) $ if and only if the induced map 
$\bpn^* (X) \rightarrow \hq^* (X,n)$ is surjective.
\end{cor}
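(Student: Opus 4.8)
This corollary is essentially immediate once one unwinds the definitions, so my proof plan is short.

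The plan is to exploit the filtration already set up by Proposition \ref{prop:indeterminacy}. Write $K := \bigcap_{i=0}^n \mathrm{Ker}(Q_i) \subset H\field_p^*(X)$ and $J := \mathrm{Im}(Q_0\ldots Q_n) \subset K$, so that $\hq^*(X,n) = K/J$ and the induced map $\bar\rho : \bpn^*(X) \rightarrow \hq^*(X,n)$ is the composite of $\rho^n_{-1}$ (which lands in $K$ by Proposition \ref{prop:indeterminacy}) with the quotient map $\pi : K \twoheadrightarrow K/J$. First I would record that $J$ is always contained in the image of $\rho^n_{-1}$, again by Proposition \ref{prop:indeterminacy}; this is the one nontrivial ingredient, and it is exactly the lower bound established there via the factorization of $Q_0\ldots Q_n$ through $\rho^n_{-1}$ in Lemma \ref{lem:Q_compatible}.

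The forward implication is trivial: if $\rho^n_{-1}$ surjects onto $K$, then $\bar\rho = \pi \circ \rho^n_{-1}$ is a composite of surjections, hence surjective. For the converse, suppose $\bar\rho$ is surjective and let $x \in K$. Choose $a \in \bpn^*(X)$ with $\bar\rho(a) = \pi(x)$, i.e.\ $x - \rho^n_{-1}(a) \in J$. Since $J \subset \mathrm{Image}(\rho^n_{-1})$, write $x - \rho^n_{-1}(a) = \rho^n_{-1}(b)$ for some $b$; then $x = \rho^n_{-1}(a+b)$, so $x$ is in the image. Hence $\mathrm{Image}(\rho^n_{-1}) = K$. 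One should note that all maps are $\bpn^*$-module maps and degree-preserving, so the argument goes through degreewise without change; I would phrase it for a fixed but arbitrary degree to be safe.

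There is no real obstacle here — the only thing to be careful about is not to circularly invoke surjectivity onto $K$ when proving the converse, which is why the explicit use of the inclusion $J \subset \mathrm{Image}(\rho^n_{-1})$ (the "harmless" half of Proposition \ref{prop:indeterminacy}) is the crux of the bookkeeping. Accordingly the written proof can be a three- or four-line diagram-free argument; I would not even draw a diagram.

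Here is the proof as I would write it.

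\begin{proof}
Write $K := \bigcap_{i=0}^n \mathrm{Ker}(Q_i) \subset H\field_p^*(X)$ and $J := \mathrm{Im}(Q_0\ldots Q_n)$. By Proposition \ref{prop:indeterminacy}, $\mathrm{Image}(\rho^n_{-1}) \subset K$ and $J \subset \mathrm{Image}(\rho^n_{-1})$; moreover $\hq^*(X,n) = K/J$ and the induced map $\bar\rho : \bpn^*(X) \rightarrow \hq^*(X,n)$ is the composite of $\rho^n_{-1} : \bpn^*(X) \rightarrow K$ with the quotient $\pi : K \twoheadrightarrow K/J$. All maps are degree-preserving, so it suffices to argue in each fixed degree.

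If $\rho^n_{-1}$ surjects onto $K$, then $\bar\rho = \pi \circ \rho^n_{-1}$ is a composite of surjections, hence surjective. Conversely, suppose $\bar\rho$ is surjective and let $x \in K$. Choose $a \in \bpn^*(X)$ with $\bar\rho(a) = \pi(x)$, so that $x - \rho^n_{-1}(a) \in J$. Since $J \subset \mathrm{Image}(\rho^n_{-1})$, there is $b \in \bpn^*(X)$ with $\rho^n_{-1}(b) = x - \rho^n_{-1}(a)$, whence $x = \rho^n_{-1}(a+b)$. Thus $\mathrm{Image}(\rho^n_{-1}) = K$.
\end{proof}
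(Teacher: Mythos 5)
Your proof is correct and is precisely the "straightforward consequence of Proposition \ref{prop:indeterminacy}" that the paper leaves to the reader: both directions rest on the sandwich $\mathrm{Im}(Q_0\ldots Q_n)\subset\mathrm{Image}(\rho^n_{-1})\subset\bigcap_{i=0}^n\mathrm{Ker}(Q_i)$, with the lower inclusion doing the work in the converse. Nothing to change.
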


\begin{proof}
A straightforward consequence of Proposition \ref{prop:indeterminacy}.
\end{proof}

\begin{rem}
Surjectivity to  $\bigcap
_{i=0}^{n}\mathrm{Ker}(Q_i) $
is a natural condition; for $n=1$ it arises in the work of Kane 
\cite{kane} on finite $H$-spaces via connective $K$-theory.
\end{rem}

When $X$ is a space, further information can be obtained by exploiting 
multiplicative structure. (Henceforth, cohomology is taken to be reduced, so a 
disjoint 
basepoint is required.)

\begin{prop}
 \label{prop:mult_quotient}
 For $X$ a space and $n \in \nat$,  
 \begin{enumerate}
  \item 
  the cup product on $H\field_p^* (X_+) $ induces a graded commutative algebra
structure on $\hq^*(X_+, n)$; 
  \item
  the reduction map $\bpn^* (X_+) \rightarrow \hq^* (X_+,n)$ is a morphism of
$\bpn^*$-algebras.
 \end{enumerate}
\end{prop}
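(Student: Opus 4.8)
The plan is to verify directly that the subquotient $\hq^*(X_+,n)$ inherits the multiplicative structure by checking compatibility of the cup product with the submodule $\bigcap_{i=0}^n \mathrm{Ker}(Q_i)$ and the ideal generated by $\mathrm{Im}(Q_0\ldots Q_n)$, and then transporting this through the reduction map. First I would recall that each $Q_i$ is a derivation for the cup product on $H\field_p^*(X_+)$ (of odd degree, so a graded derivation with the appropriate Koszul sign), and that the $Q_i$ pairwise anticommute; this is the standard Milnor structure. From the derivation property it is immediate that $\bigcap_{i=0}^n\mathrm{Ker}(Q_i)$ is a subalgebra of $H\field_p^*(X_+)$: if $Q_i(a)=Q_i(b)=0$ for all $i$ then $Q_i(ab) = Q_i(a)b \pm a Q_i(b) = 0$.

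Next I would show that $\mathrm{Im}(Q_0\ldots Q_n)$ is an ideal \emph{inside} this subalgebra, which is what is needed for the quotient to be an algebra. For $x = Q_0\ldots Q_n(y)$ and $a \in \bigcap_i \mathrm{Ker}(Q_i)$, I want $ax \in \mathrm{Im}(Q_0\ldots Q_n)$. Using that $a$ is annihilated by every $Q_i$ and the derivation property, one pushes $a$ successively past each $Q_i$: $a\cdot Q_0\ldots Q_n(y) = \pm Q_0\bigl(a\cdot Q_1\ldots Q_n(y)\bigr)$ since $Q_0(a)=0$, and iterating gives $ax = \pm Q_0\ldots Q_n(a y)$, which lies in the image. (The signs are harmless; as the remark after Lemma \ref{lem:ker-tor} notes, in the cases of real interest the relevant classes sit in even degrees, but the derivation computation works with signs in general.) This establishes part (1): $\hq^*(X_+,n)$ is a graded commutative algebra, with unit the class of $1 \in H\field_p^0(X_+)$ (note $1 \in \bigcap_i \mathrm{Ker}(Q_i)$ since each $Q_i$ lowers nothing to degree below $0$ and annihilates the unit as a derivation).

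For part (2), I would use Proposition \ref{prop:mult_quotient} is downstream of Proposition \ref{prop:indeterminacy} and Corollary \ref{cor:surjectivity}: the map $\bpn^*(X_+) \to \hq^*(X_+,n)$ is the composite of $\rho^n_{-1}\colon \bpn^*(X_+)\to H\field_p^*(X_+)$ (which lands in $\bigcap_i\mathrm{Ker}(Q_i)$ by Proposition \ref{prop:indeterminacy}) with the quotient projection onto $\hq^*(X_+,n)$. Since $X_+$ is a suspension spectrum of a space, $\rho^n_{-1}$ is a map of ring spectra (the $\bpn$ are $MU$-ring spectra and the reduction maps are ring maps, as recalled in Section \ref{sect:prelim}), so $\rho^n_{-1}\colon \bpn^*(X_+)\to H\field_p^*(X_+)$ is a morphism of graded-commutative $\bpn^*$-algebras. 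The projection $\bigcap_i\mathrm{Ker}(Q_i) \twoheadrightarrow \hq^*(X_+,n)$ is a morphism of algebras by part (1), hence so is the composite; it is visibly $\bpn^*$-linear because $\rho^n_{-1}$ is and the $\bpn^*$-module structure on $\hq^*(X_+,n)$ is the one induced from $H\field_p^*(X_+)$ via $\rho^n_{-1}$ of the coefficients.

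The only genuinely delicate point is bookkeeping of the Koszul signs in showing $\mathrm{Im}(Q_0\ldots Q_n)$ is an ideal in the socle-subalgebra; everything else is formal. One should check that the sign accumulated in moving $a$ (which has some fixed degree) past the chain $Q_0\ldots Q_n$ is a single global sign depending only on $\deg a$ and $n$, so that the element $ax$ is, up to that uniform sign, equal to $Q_0\ldots Q_n(ay)$ — and a global sign does not affect membership in the image. I expect this sign-chasing to be the main (if minor) obstacle; it can be streamlined by working with the single odd-degree operation $Q_0\ldots Q_n$ and noting it is itself a derivation only up to correction terms that vanish on $\bigcap_i\mathrm{Ker}(Q_i)$, or simply by direct induction on $n$.
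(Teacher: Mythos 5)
Your argument is correct and is exactly the elaboration of the paper's own (one-line) proof: part (1) follows because the $Q_i$ are derivations, making $\bigcap_i\mathrm{Ker}(Q_i)$ a subalgebra and $\mathrm{Im}(Q_0\ldots Q_n)$ an ideal therein by the push-past computation you describe, and part (2) is formal since $\rho^n_{-1}$ is a map of ring spectra landing in that subalgebra. No gaps; the sign bookkeeping you flag is harmless over $\field_p$ since a global unit does not affect membership in the image.
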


\begin{proof}
 The first statement is an immediate consequence of the fact that the operations
$Q_i$ are derivations and the second is a formal consequence of the construction
of the reduction. 
\end{proof}

\section{Injectivity and surjectivity for generalized reduction maps}
\label{sect:general}

Fix $n\in \nat$;  for a spectrum $X$, 
$\rho^{n+1}_n : \bpn[n+1] \rightarrow \bpn$ induces a morphism of $\bpn[n+1]_*$-modules
\[
\bpn[n+1]^* (X)
\stackrel{\rho^{n+1}_n}{\rightarrow}
\bpn^* (X), 
\]
which is not surjective in general. Similarly one can consider the reduction 
\[
BP^* (X)
\stackrel{\rho_n}{\rightarrow}
\bpn^* (X). 
\]
Wilson's result, \cite[Corollary 5.6]{WilII}, gives surjectivity in low
degrees, for $X$ a suspension spectrum. 

General criteria for injectivity and surjectivity are introduced in this
section.

\subsection{Surjecting to $\bpn$-cohomology}

The short exact sequence 
\begin{eqnarray}
 \label{eqn:ses_fund}
 0 
 \rightarrow
 \bpn[n+1] ^* (X) /v_{n+1}
 \rightarrow
\bpn^* (X)
\rightarrow 
\mathrm{Ker}(v_{n+1})^{*+|Q_{n+1}|}
\rightarrow
0
\end{eqnarray}
is induced by the cofibre sequence 
$\bpn[n+1]
\stackrel{\rho^{n+1}_n } {\rightarrow}
\bpn
\stackrel{q_{n+1}}{\rightarrow} 
\Sigma^{|Q_{n+1}|} 
\bpn[n+1].
$

\begin{rem}
Identifying $\mathrm{Ker}(v_{n+1}) $ as $\mathrm{Tor}_1^{\zed_{(p)}[v_{n+1}]} (
\zed_{(p)} ,\bpn[n+1]^* (X))$, the sequence (\ref{eqn:ses_fund}) can be viewed
as a universal coefficient short exact sequence; cf. \cite[Proposition
5.7]{JW73}, where homology is considered. 
\end{rem}

By restriction  to  $\vtor \subset \bpn^*
(X)$, $q_{n+1}$ gives a natural map 
\[
 \kappa_n : \vtor \rightarrow \Sigma^{|Q_{n+1}|} \mathrm{Ker}(v_{n+1});
\]
and the inclusion $\vtor \subset \bpn^* (X)$ together with 
$\bpn[n+1]^* (X) \stackrel{\rho^{n+1}_n}{\rightarrow} \bpn^* (X)$ induce
\[
\sigma_n : 
 \bpn[n+1]^* (X) \oplus \vtor \rightarrow
 \bpn^* (X).
\]
Similarly, write 
$$\tilde{\sigma}_n : BP^* (X) \oplus \vtor
\rightarrow
 \bpn^* (X)$$
 for the map obtained by replacing $\rho^{n+1}_n$ with $\rho_n$.

\begin{lem}
\label{lem:equivalent_surjectivity}
 For $X$ a spectrum, the following conditions are equivalent:
 \begin{enumerate}
  \item 
  $\sigma_n : \bpn[n+1]^* (X) \oplus
\vtor \rightarrow
 \bpn^* (X)$ is surjective;
  \item
   $\kappa_n : \vtor \rightarrow
\Sigma^{|Q_{n+1}|} \mathrm{Ker}(v_{n+1})$ is surjective;
\item
 $
  \bpn[n+1]^* (X) 
  \rightarrow 
  \mathbf{cotors}_{v_{n}} \bpn^* (X), 
 $
 induced by $\rho^{n+1}_n$, is surjective.
 \end{enumerate}
\end{lem}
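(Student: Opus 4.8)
The plan is to deduce all equivalences from the short exact sequence (\ref{eqn:ses_fund}), which I rewrite as
\[
0 \rightarrow \bpn[n+1]^* (X)/v_{n+1} \stackrel{\rho^{n+1}_n}{\rightarrow} \bpn^* (X) \stackrel{q_{n+1}}{\rightarrow} \mathrm{Ker}(v_{n+1})^{*+|Q_{n+1}|} \rightarrow 0.
\]
First I would observe that the image of $\rho^{n+1}_n : \bpn[n+1]^*(X) \rightarrow \bpn^*(X)$ equals the image of $\bpn[n+1]^*(X)/v_{n+1}$ in $\bpn^*(X)$, since $v_{n+1}$ acts as zero on $\bpn^*(X)$; by exactness this image is precisely $\mathrm{Ker}(q_{n+1})$. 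Therefore the image of $\sigma_n$ is $\mathrm{Ker}(q_{n+1}) + \vtor$, and $\sigma_n$ is surjective if and only if $q_{n+1}$ maps $\vtor$ onto all of $\mathrm{Ker}(v_{n+1})^{*+|Q_{n+1}|}$, which is exactly surjectivity of $\kappa_n$ (note $\kappa_n$ is the corestriction of $q_{n+1}|_{\vtor}$, and lands in $\mathrm{Ker}(v_{n+1})$ by Lemma \ref{lem:q_vn-torsion}). This gives the equivalence of (1) and (2).

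For (2) $\Leftrightarrow$ (3), I would work with the $v_n$-torsion theory on $\bpn^*(X)$, viewing the latter as a $\zed_{(p)}[v_n]$-module (for $n=0$ read $p$ in place of $v_n$; the argument is unchanged). The key point is that $\mathrm{Im}(\rho^{n+1}_n) = \mathrm{Ker}(q_{n+1})$ contains $v_n \cdot \bpn^*(X)$: indeed $v_n$ comes from $\bpn[n+1]^*$ and acts on $\bpn^*(X)$ through the ring map $\rho^{n+1}_n$, so everything $v_n$-divisible lies in the image. Hence the composite $\bpn[n+1]^*(X) \rightarrow \bpn^*(X) \rightarrow \mathbf{cotors}_{v_n}\bpn^*(X)$ has image equal to the image of $\mathrm{Ker}(q_{n+1})$ in the cotorsion quotient. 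Now $\mathbf{cotors}_{v_n}\bpn^*(X) = \bpn^*(X)/\vtor$, so $\mathrm{Ker}(q_{n+1})$ surjects onto it if and only if $\mathrm{Ker}(q_{n+1}) + \vtor = \bpn^*(X)$, which by the discussion above is condition (1); combined with (1) $\Leftrightarrow$ (2) this closes the cycle. Alternatively, (2) $\Leftrightarrow$ (3) can be phrased directly: surjectivity of $\kappa_n$ says $q_{n+1}$ already kills nothing new on passing to $\bpn^*(X)/\vtor$, i.e. the induced $\overline{q}_{n+1} : \mathbf{cotors}_{v_n}\bpn^*(X) \rightarrow \mathrm{Ker}(v_{n+1})^{*+|Q_{n+1}|}$ is zero, which is equivalent to $\rho^{n+1}_n$ surjecting onto $\mathbf{cotors}_{v_n}\bpn^*(X)$ by exactness of (\ref{eqn:ses_fund}) after quotienting.

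The main obstacle I anticipate is bookkeeping around the torsion theory: one must be careful that $\vtor[n] \subset \bpn^*(X)$ really is the kernel of $\bpn^*(X) \rightarrow \mathbf{cotors}_{v_n}\bpn^*(X)$ and that $q_{n+1}$ restricted to $\vtor[n]$ indeed lands in $\mathrm{Ker}(v_{n+1})$ rather than a larger subgroup — the latter is Lemma \ref{lem:q_vn-torsion}, and I would cite it explicitly. A secondary subtlety is the degenerate case $n=0$, where $v_0 = p$ and $\bpn[0]^* = H\zed_{(p)}^*$; here one should check the argument is insensitive to whether the relevant coefficient ring is literally a polynomial ring on $v_0$, but since all we use is that $\bpn^*(X)$ is a module over $\zed_{(p)}[v_n]$ with $v_{n+1}$ acting through $\rho^{n+1}_n$, no change is needed. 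Modulo these checks the proof is a formal diagram chase through (\ref{eqn:ses_fund}).
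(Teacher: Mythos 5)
Your main line of argument is correct and is precisely the ``straightforward'' diagram chase the paper intends: by exactness of (\ref{eqn:ses_fund}), $\mathrm{Im}(\rho^{n+1}_n)=\mathrm{Ker}(q_{n+1})$, hence $\mathrm{Im}(\sigma_n)=\mathrm{Ker}(q_{n+1})+\vtor$, and this equals $\bpn^*(X)$ if and only if $q_{n+1}|_{\vtor}=\kappa_n$ surjects onto $\mathrm{Ker}(v_{n+1})^{*+|Q_{n+1}|}$, and also if and only if $\mathrm{Im}(\rho^{n+1}_n)$ surjects onto $\bpn^*(X)/\vtor=\mathbf{cotors}_{v_n}\bpn^*(X)$. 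However, two incidental assertions are wrong and should be deleted, even though neither is load-bearing. First, your ``key point'' that $\mathrm{Ker}(q_{n+1})$ contains $v_n\cdot\bpn^*(X)$ does not hold in general: $q_{n+1}$ is $\bpn[n+1]^*$-linear, so $q_{n+1}(v_nx)=v_n\,q_{n+1}(x)$, and there is no reason for $v_n$ to annihilate $\mathrm{Ker}(v_{n+1})\subset\bpn[n+1]^*(X)$ (equivalently, for $v_n$ to act trivially on $\mathrm{coker}(\rho^{n+1}_n)$). The sentence that follows (``Hence the composite\dots has image equal to the image of $\mathrm{Ker}(q_{n+1})$'') needs only the identity $\mathrm{Im}(\rho^{n+1}_n)=\mathrm{Ker}(q_{n+1})$, so nothing is lost by removing the claim. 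Second, in your ``alternative'' phrasing of (2)$\Leftrightarrow$(3) there is no well-defined induced map $\overline{q}_{n+1}$ on $\mathbf{cotors}_{v_n}\bpn^*(X)$ unless $q_{n+1}$ annihilates $\vtor$ --- which is essentially the opposite of what condition (2) asserts; that sentence should be dropped in favour of the direct observation that surjectivity of $\bpn[n+1]^*(X)\to\mathbf{cotors}_{v_n}\bpn^*(X)$ is literally the statement $\mathrm{Im}(\rho^{n+1}_n)+\vtor=\bpn^*(X)$, i.e.\ condition (1).
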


\begin{proof}
 Straightforward.
\end{proof}

The following result illustrates how the identification of $ \bpn[n+1]^\mathrm{odd}(X)$ leads to 
 a criterion for the surjectivity of $\sigma_n$; this is a warm-up for the proof of Theorem \ref{thm:general}.

\begin{prop}
\label{prop:sigma_surjectivity}
Let $X$ be a spectrum  such that $\bpn^\mathrm{odd} (X) =
\vtor^\mathrm{odd}$ and 
  $\rho^{n+1}_{-1}$ induces an isomorphism
 \[
  \bpn[n+1]^\mathrm{odd}(X) \stackrel{\cong}{\rightarrow} \mathrm{Im} (Q_0\ldots
Q_{n+1})^\mathrm{odd} 
  \subset H\field_p^\mathrm{odd} (X).
 \]
Then $\sigma_n : \bpn[n+1]^* (X) \oplus \vtor \twoheadrightarrow \bpn^* (X)$ is
surjective. 
\end{prop}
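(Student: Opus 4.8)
The plan is to verify condition (2) of Lemma~\ref{lem:equivalent_surjectivity}, namely that $\kappa_n : \vtor \rightarrow \Sigma^{|Q_{n+1}|}\mathrm{Ker}(v_{n+1})$ is surjective, which then yields the surjectivity of $\sigma_n$ for free. First I would observe that, by Lemma~\ref{lem:q_vn-torsion} applied with $n$ replaced by $n+1$, the operation $q_{n+1}$ lands $\bpn^*(X)$ into $\mathrm{Ker}(v_{n+1})^{*+|Q_{n+1}|}\subset \bpn[n+1]^{*+|Q_{n+1}|}(X)$; the hypothesis that $\bpn^{\mathrm{odd}}(X)=\vtor^{\mathrm{odd}}$ will be used to pin down where this map is supported.

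The key step is a degree/parity bookkeeping argument combined with the hypothesis on $\bpn[n+1]^{\mathrm{odd}}(X)$. Since $|Q_{n+1}|=2(p^{n+1}-1)+1$ is odd, the target $\Sigma^{|Q_{n+1}|}\mathrm{Ker}(v_{n+1})$ in degree $*$ corresponds to $\mathrm{Ker}(v_{n+1})$ in the opposite parity in degree $*-|Q_{n+1}|$. I would split into even and odd target degrees. In the odd target degrees, $\mathrm{Ker}(v_{n+1})^{\mathrm{odd}}\subset \bpn[n+1]^{\mathrm{odd}}(X)$, which by hypothesis is isomorphic via $\rho^{n+1}_{-1}$ to $\mathrm{Im}(Q_0\ldots Q_{n+1})^{\mathrm{odd}}\subset H\field_p^{\mathrm{odd}}(X)$; using Lemma~\ref{lem:Q_compatible}, the composite $q_{n+1}\ldots q_0$ factoring $Q_{n+1}\ldots Q_0$ shows that every element of $\mathrm{Im}(Q_0\ldots Q_{n+1})$ is hit by $q_{n+1}$ applied to something in the image of $q_n\ldots q_0$, hence coming from $\bpn^*(X)$; one then checks this source element is $v_{n+1}$-torsion (indeed the whole composite factors through $\vtor$, since $q_n\ldots q_0$ lands in $v_n$-torsion by iterating Lemma~\ref{lem:q_vn-torsion}), so it lies in $\vtor$. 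For the even target degrees, $\mathrm{Ker}(v_{n+1})^{\mathrm{even}}$ sits inside $\bpn[n+1]^{\mathrm{even}}(X)$; here I would use the cofibre sequence~(\ref{eqn:cofib_BPn}) (indexed at $n+1$) together with the hypothesis $\bpn^{\mathrm{odd}}(X)=\vtor^{\mathrm{odd}}$ to show that any $v_{n+1}$-torsion class in even degree is the $q_{n+1}$-image of a class in $\bpn^{\mathrm{odd}}(X)=\vtor^{\mathrm{odd}}$, hence in the image of $\kappa_n$.

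The main obstacle I anticipate is the even-degree case: controlling $\mathrm{Ker}(v_{n+1})^{\mathrm{even}}\subset\bpn[n+1]^{\mathrm{even}}(X)$ and showing it is entirely in the image of $q_{n+1}$ restricted to $\vtor$, rather than merely in the image of $q_{n+1}$ on all of $\bpn^*(X)$. This requires chasing the long exact sequence of~(\ref{eqn:cofib_BPn}) carefully: an element $y\in\mathrm{Ker}(v_{n+1})^{\mathrm{even}}$ lifts to some $x\in\bpn^{\mathrm{odd}}(X)$ with $q_{n+1}(x)=y$, and by hypothesis $x\in\vtor^{\mathrm{odd}}$, so $y=\kappa_n(x)$; the subtlety is making sure the lift exists, i.e. that the relevant portion of the long exact sequence forces $\bpn[n+1]^{\mathrm{even}}(X)\to\bpn^{\mathrm{even}}(X)$ to have the matching kernel, which is exactly the exactness statement at $\bpn^*(X)$ in~(\ref{eqn:cofib_BPn}). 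Once both parities are handled, surjectivity of $\kappa_n$ follows, and Lemma~\ref{lem:equivalent_surjectivity} gives the conclusion. I would close by remarking that the odd-degree argument is really just the observation, via Lemma~\ref{lem:Q_compatible} and Corollary~\ref{cor:surjectivity}, that the hypothesised isomorphism already forces all of $\mathrm{Ker}(v_{n+1})^{\mathrm{odd}}$ to be $q_{n+1}$-hit from $\vtor$.
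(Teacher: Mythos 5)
Your proposal is correct and follows essentially the same route as the paper: reduce to surjectivity of $\kappa_n$ via Lemma \ref{lem:equivalent_surjectivity}, handle the odd-degree part of $\mathrm{Ker}(v_{n+1})$ by combining the hypothesised isomorphism $\bpn[n+1]^{\mathrm{odd}}(X)\cong \mathrm{Im}(Q_0\ldots Q_{n+1})^{\mathrm{odd}}$ with the factorization of $Q_0\ldots Q_{n+1}$ through $q_{n+1}\ldots q_0$ and the fact that $q_n\ldots q_0$ lands in $\mathrm{Ker}(v_n)\subset\vtor$. Your even-target-degree case is just the paper's observation that $\sigma_n$ is automatically surjective in odd degrees (since $\bpn^{\mathrm{odd}}(X)=\vtor^{\mathrm{odd}}$), restated on the $\kappa_n$ side of the equivalence, and the lift you worry about is supplied directly by the surjectivity in the short exact sequence (\ref{eqn:ses_fund}).
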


\begin{proof}
 Since $\vtor^\mathrm{odd} \cong \bpn^\mathrm{odd} (X)$ by hypothesis, it
suffices to show that 
 $\sigma_n$ surjects in even degree. By Lemma \ref{lem:equivalent_surjectivity},
it suffices to show that 
 \[
\kappa_n :  \vtor^{2s} \rightarrow \mathrm{Ker}(v_{n+1}) ^{2s + |Q_{n+1}|}
 \]
is surjective, for all $s\in \zed$. The hypothesis on $\bpn[n+1]^\mathrm{odd}
(X)$ implies that 
$\mathrm{Ker}(v_{n+1}) ^\mathrm{odd} = \bpn[n+1]^\mathrm{odd} (X)$, 
which embeds as
$\mathrm{Im} (Q_0\ldots Q_{n+1})^\mathrm{odd}$ in $H\field_p^\mathrm{odd}(X)$.
 
Lemma \ref{lem:Q_compatible} shows that $Q_0\ldots Q_{n+1}$
factors across $q_{n+1}\ldots q_0$ and, hence, across $q_n\ldots q_0 : H\field_p
\rightarrow \Sigma^{\sum |Q_i|}
\bpn$, which maps to $ \mathrm{Ker}(v_n) \subset \vtor \subset \bpn^* (X)$ in
cohomology, by Lemma \ref{lem:q_vn-torsion}; since $\kappa_n$
is induced by $q_{n+1}$, Lemma \ref{lem:Q_compatible}  implies  surjectivity 
to $\mathrm{Ker} (v_{n+1}) $ in odd degrees, as 
required.
\end{proof}

For $X$ a spectrum and $n \in \nat$, the reduction map $\rho^n_{n-1}$ fits into
a diagram 
\[
 \xymatrix{
 \vtor
 \ar@{^(->}[r]
 &
 \bpn^* (X)
 \ar[d]^{\rho^n_{n-1}}
 \\
 \vtor[n-1]
 \ar@{^(->}[r]
 &
 \bpn[n-1]^* (X).
 }
\]
It is tempting to assert that the diagram can be completed to a commutative
square, using the structure 
theory of $BP_*BP$-comodules \cite[Theorem 0.1]{JY} and the stable comodule
structure on $BP^* (X)$ provided (after suitable completion)
by \cite[Sections 11, 15]{Board}. However, the passage to the Wilson theories
$\bpn$ is delicate.

For this reason, the hypothesis that $\rho^n_{n-1}$ sends $\vtor$ to
$\vtor[n-1]$ is 
included in the following result.

\begin{prop}
\label{prop:BP_surjectivity}
 Let $X$ be a spectrum and $n \in \nat$ such that 
\begin{enumerate}
 \item 
$\tilde{\sigma}_n : BP^* (X) \oplus \vtor \twoheadrightarrow
 \bpn^* (X)$ is surjective; 
\item
for $0 \leq j \leq n$
\begin{enumerate}
\item
$\vtor[j]\hookrightarrow \bpn[j]^* (X) \rightarrow \bpn[j-1]^* (X)$ factors
across $\vtor[j-1] \subset \bpn[j-1]^* (X)$;
\item
${\sigma}_j : \bpn[j]^* (X) \oplus \vtor[j-1] \twoheadrightarrow
 \bpn[j-1]^* (X)$ is surjective.
\end{enumerate}
\end{enumerate}
Then, for $0 \leq j \leq n $, $\tilde{\sigma}_j : BP^* (X) \oplus
\vtor[j] \twoheadrightarrow
 \bpn[j]^* (X)$ is surjective;
\end{prop}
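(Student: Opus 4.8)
The plan is to argue by descending induction on $j$, the base case $j=n$ being precisely hypothesis (1). So suppose $1 \leq j \leq n$ and that $\tilde{\sigma}_j : BP^*(X) \oplus \vtor[j] \twoheadrightarrow \bpn[j]^*(X)$ is surjective; the goal is to deduce surjectivity of $\tilde{\sigma}_{j-1}$, invoking hypotheses (2)(a) and (2)(b) at level $j$. Iterating the step from $j=n$ down to $j=1$ then gives the claim for all $0 \leq j \leq n$.

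Concretely, write $\iota_k : \vtor[k] \hookrightarrow \bpn[k]^*(X)$ for the inclusion of the $v_k$-torsion submodule. Surjectivity of $\sigma_j$ (hypothesis (2)(b)) means that every $x \in \bpn[j-1]^*(X)$ can be written as $x = \rho^j_{j-1}(y) + \iota_{j-1}(t)$ for some $y \in \bpn[j]^*(X)$ and $t \in \vtor[j-1]$. Applying the inductive hypothesis to $y$ yields $b \in BP^*(X)$ and $s \in \vtor[j]$ with $y = \rho_j(b) + \iota_j(s)$. Substituting and using the composability of the reduction maps, $\rho^j_{j-1}\rho_j = \rho_{j-1}$ (as already used, e.g.\ in the proof of Proposition \ref{prop:indeterminacy}), together with additivity of $\rho^j_{j-1}$, gives
\[
 x = \rho_{j-1}(b) + \rho^j_{j-1}(\iota_j(s)) + \iota_{j-1}(t).
\]
Hypothesis (2)(a) is exactly the assertion that $\rho^j_{j-1}\circ \iota_j$ factors as $\iota_{j-1}\circ \bar{\rho}$ for some map $\bar{\rho} : \vtor[j] \to \vtor[j-1]$; hence $\rho^j_{j-1}(\iota_j(s)) + \iota_{j-1}(t) = \iota_{j-1}\big(\bar{\rho}(s) + t\big)$ lies in the image of $\iota_{j-1}$, and so $x = \tilde{\sigma}_{j-1}(b,\, \bar{\rho}(s)+t)$. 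This proves the inductive step.

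I do not expect a serious obstacle: the argument is essentially a diagram chase. The only subtlety is that two of the ingredients genuinely must be assumed rather than deduced --- the factorization of $\rho^j_{j-1}$ through the $v$-torsion submodules (hypothesis (2)(a)), which, as noted in the discussion preceding the Proposition, is delicate for the Wilson theories and not automatically furnished by the comodule formalism, and the surjectivity of $\sigma_j$ (hypothesis (2)(b)), which fails in general. One should also note in passing that all the maps appearing (the reductions $\rho_j$, $\rho^j_{j-1}$, and the inclusions $\iota_k$) are morphisms of modules over the relevant coefficient rings, so that the displayed decomposition of $x$ is the image under $\tilde{\sigma}_{j-1}$ of an honest element of $BP^*(X)\oplus\vtor[j-1]$; but this is immediate from the constructions recalled in Section \ref{sect:prelim}.
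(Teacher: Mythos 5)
Your argument is correct and is exactly the "straightforward downward induction on $j$" that the paper's proof consists of (the paper gives no further detail); the base case from hypothesis (1), the substitution using surjectivity of the maps in (2)(b), and the use of the factorization (2)(a) to land back in $\vtor[j-1]$ are precisely what is intended. No discrepancies.
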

 
\begin{proof}
The result is proved by a straightforward downward induction on $j$.
\end{proof}

\begin{rem}
 The result will be applied in the case where $\rho_n : BP^* (X) \rightarrow
\bpn^* (X)$ is itself surjective, hence establishing the first point of the
hypotheses.
\end{rem}

\subsection{Injectivity and base change}

The reduction map $\rho^{n+1}_n$ induces a morphism of $\bpn^*$-modules:
\[
 \bpn^* \otimes _{\bpn[n+1]^*}\bpn[n+1]^* (X)
 \rightarrow 
 \bpn^* (X)
\]
and, by base change,
\[
 \field_p \otimes_{\bpn[n+1]^*} \bpn[n+1]^* (X)
 \rightarrow 
 \field_p\otimes _{\bpn^*} \bpn^* (X)
\]
which need not {\em a priori} be injective. Criteria for the injectivity of this
and related morphisms are considered in this 
section.

The following terminology is used:
 
 \begin{defn}
 A $\bpn^*$-module $M$ is trivial if it is given by restriction of a
$\field_p$-vector space structure along $\bpn^* \twoheadrightarrow \field_p$.
\end{defn}

The following basic lemma extracts the formal part of the argument employed in
Propositions \ref{prop:quotient_injectivity} and
\ref{prop:BP_quotient_injectivity} below.

\begin{lem}
 \label{lem:hom-alg}
Let $\mathscr{C}$, $\mathscr{D}$ be abelian categories and 
\[
 \xymatrix{
A \ar[r]^\alpha  
\ar[d]
&
B
\ar[d]
\\
C
\ar[r]
_\gamma
&
D
}
\]
be a cartesian square in $\mathscr{C}$. Then 
\begin{enumerate}
 \item 
$\alpha$ is injective if and only if $\gamma$ is injective;
\item
the square is also cocartesian if and only if the associated total complex 
\[
 A \rightarrow B \oplus C \rightarrow D
\]
is a short exact sequence. 
\end{enumerate}
Suppose that the square is both cartesian and cocartesian and $F : \mathscr{C}
\rightarrow \mathscr{D}$ is a right exact functor. If $F (\alpha)$ is
a monomorphism then 
\[
 \xymatrix{
F(A) \ar[r]^{F(\alpha)}  
\ar[d]
&
F(B)
\ar[d]
\\
F(C)
\ar[r]
_{F(\gamma)}
&
F(D)
}
\]
is both cartesian and cocartesian and $F(\gamma)$ is injective.
\end{lem}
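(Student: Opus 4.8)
The plan is to prove the three assertions in order, since each feeds into the next. First I would establish (1). A square is cartesian in an abelian category precisely when the sequence $0 \to A \xrightarrow{(\alpha, -)} B \oplus C \xrightarrow{(+,+)} D$ is exact at $A$ and $B \oplus C$; in particular $A \to B \oplus C$ is a monomorphism. Now $\ker \alpha = \ker(A \to B)$, and an element of $\ker \alpha$ pushed into $B \oplus C$ lies in $0 \oplus C$, where it maps to $\gamma$ applied to its image in $C$. Chasing the universal property (or the kernel description above) shows $\ker \alpha \cong \ker \gamma$ as subobjects of $A$: both are identified with the pullback of $0 \hookrightarrow B$ along the bottom-and-left composite. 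Hence $\alpha$ is injective iff $\gamma$ is. This is the easy part and is essentially the symmetry of the pullback construction.

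For (2): the total complex $A \to B \oplus C \to D$ is always a complex, and cartesianness gives exactness at $A$ and at $B \oplus C$ (as recalled above). So the complex is a short exact sequence iff additionally $B \oplus C \to D$ is an epimorphism, which is exactly the statement that the square is cocartesian (the pushout $B \sqcup_A C$ receives an epi from $B \oplus C$, and it equals $D$ iff that composite $B \oplus C \to D$ is surjective, given we already know the kernel is the image of $A$). So I would phrase (2) as: cocartesian $\iff$ $B\oplus C \to D$ is epi $\iff$ (combined with the automatic exactness from cartesianness) the total complex is short exact.

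For the final claim, assume the square is both cartesian and cocartesian, so by (2) we have a short exact sequence $0 \to A \to B \oplus C \to D \to 0$ in $\mathscr{C}$. Apply the right exact functor $F$: right exactness gives exactness of $F(A) \to F(B) \oplus F(C) \to F(D) \to 0$. The hypothesis that $F(\alpha)$ is a monomorphism is what upgrades this: I would argue that $F(A) \to F(B) \oplus F(C)$ is then injective as well. Indeed its components are $F(\alpha)$ and $-F(\beta)$ (writing $\beta : A \to C$ for the other leg), and injectivity of $F(\alpha)$ forces injectivity of the pair $(F(\alpha), -F(\beta))$. Hence $0 \to F(A) \to F(B)\oplus F(C) \to F(D) \to 0$ is short exact, which by (2) says the $F$-image square is both cartesian and cocartesian; and then by (1) applied to this square, $F(\gamma)$ is injective (equivalently, one reads injectivity of $F(\gamma)$ directly off exactness at $F(A)$ together with the kernel computation from (1)).

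The main obstacle — really the only subtle point — is the step ``$F(\alpha)$ mono $\Rightarrow$ $F(A) \to F(B) \oplus F(C)$ mono,'' since a priori a right exact functor destroys monomorphisms, so one cannot simply say $F$ preserves the short exactness. The resolution is that we do not need $F$ to preserve all of it: right exactness already hands us exactness at $F(B)\oplus F(C)$ and surjectivity at $F(D)$ for free, and the monomorphism hypothesis on $F(\alpha)$ is precisely the one remaining piece, because a map into a product is mono as soon as one of its coordinate maps is. Everything else is diagram-chasing that is routine in any abelian category, so I would keep the write-up of (1) and (2) brief and spend the detail on assembling the short exact sequence after applying $F$.
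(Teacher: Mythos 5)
Your proof is correct, and it is exactly the standard argument the paper has in mind (the lemma is stated there without proof): characterize cartesian/cocartesian via exactness of the total complex $0\rightarrow A\rightarrow B\oplus C\rightarrow D\rightarrow 0$, apply right exactness of $F$, and recover injectivity of $F(A)\rightarrow F(B)\oplus F(C)$ from the single component $F(\alpha)$ since a map into a direct sum is a monomorphism as soon as one of its components is. The only cosmetic slip is calling $\mathrm{Ker}(\alpha)\cong\mathrm{Ker}(\gamma)$ an identification ``as subobjects of $A$''; the kernel of $\gamma$ lives in $C$, and the isomorphism is induced by the left vertical map.
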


\begin{prop}
 \label{prop:quotient_injectivity}
 For $X$  a  spectrum and $n \in \nat$ such that 
 \begin{enumerate}
  \item 
  $\vtor$ is trivial as a $\bpn^*$-module;
  \item
  $\sigma_n : \bpn[n+1]^* (X) \oplus \vtor \twoheadrightarrow \bpn^* (X)$ is
surjective;
 \end{enumerate}
the morphism induced by $\rho^{n+1}_n$:
\[
 \field_p \otimes_{\bpn[n+1]^*} \bpn[n+1]^* (X)
 \rightarrow 
 \field_p\otimes _{\bpn^*} \bpn^* (X)
\]
is injective.
\end{prop}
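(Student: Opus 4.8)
The plan is to reduce everything to the formal statement of Lemma \ref{lem:hom-alg}. First I would set up the relevant commutative square. The reduction $\rho^{n+1}_n : \bpn[n+1] \to \bpn$ can be realized, at the level of $MU$-modules, as a quotient by $v_{n+1}$, so there is a cofibre sequence and in particular $\bpn^* \cong \bpn[n+1]^*/(v_{n+1})$. This gives the square
\[
 \xymatrix{
 \bpn[n+1]^* (X) \ar[r] \ar[d] & \bpn^* (X) \ar[d] \\
 \bpn[n+1]^*(X) \oplus \vtor \ar[r] & \bpn^*(X),
 }
\]
but that is not quite the shape needed; instead I would build the square whose total complex is a short exact sequence realizing the splitting. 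Concretely, from hypothesis (2) the map $\sigma_n$ is surjective, with kernel some submodule $K \subset \bpn[n+1]^*(X) \oplus \vtor$, and I would arrange a cartesian-cocartesian square with $A = K$ (or a suitable pullback) mapping into $B = \bpn[n+1]^*(X)$ and $C = \vtor$, with $D$ the common image in $\bpn^*(X)$. The point is to exhibit $\bpn[n+1]^*(X) \to \bpn^*(X)$ and its relation to $\vtor$ as the two legs of such a square so that the second and third parts of Lemma \ref{lem:hom-alg} apply with $F = \field_p \otimes_{\bpn[n+1]^*} (-)$.

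Second, I would check that $F = \field_p \otimes_{\bpn[n+1]^*}(-)$ is right exact (it is, being a tensor product) and that the hypothesis "$F(\alpha)$ is a monomorphism" holds for the appropriate $\alpha$. This is where triviality of $\vtor$ enters: since $\vtor$ is a trivial $\bpn^*$-module, hence a trivial $\bpn[n+1]^*$-module via $\bpn[n+1]^* \twoheadrightarrow \bpn^* \twoheadrightarrow \field_p$, applying $\field_p \otimes_{\bpn[n+1]^*}(-)$ to it does nothing — $F(\vtor) = \vtor$ — and in particular $F$ applied to the inclusion $\vtor \hookrightarrow \bpn[n+1]^*(X)\oplus\vtor$ (or whichever $\alpha$ sits in the square) remains injective because the $\vtor$ summand is untouched and $\field_p \otimes -$ of a split inclusion is split. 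Then Lemma \ref{lem:hom-alg} delivers that $F$ of the other leg, namely $\field_p \otimes_{\bpn[n+1]^*}\bpn[n+1]^*(X) \to \field_p \otimes_{\bpn[n+1]^*}\bpn^*(X)$, is injective.

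Third, I would identify $\field_p \otimes_{\bpn[n+1]^*}\bpn^*(X)$ with $\field_p \otimes_{\bpn^*}\bpn^*(X)$. This is immediate from associativity of tensor product and the fact that $\field_p \otimes_{\bpn[n+1]^*}\bpn^* \cong \field_p$, since $\bpn^* = \bpn[n+1]^*/(v_{n+1})$ and further killing $v_1,\dots,v_n$ and $p$ gives $\field_p$; that is, $\field_p \otimes_{\bpn[n+1]^*} \bpn^*(X) \cong \field_p \otimes_{\bpn^*} \bpn^*(X) \otimes_{\bpn[n+1]^*}\bpn^* \cong \field_p \otimes_{\bpn^*}\bpn^*(X)$. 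Combining with the previous step yields the injectivity of the stated map.

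The main obstacle I anticipate is getting the cartesian/cocartesian square to be genuinely the right one, i.e. correctly packaging the surjectivity hypothesis on $\sigma_n$ together with the inclusion $\vtor \hookrightarrow \bpn^*(X)$ into a pullback square to which Lemma \ref{lem:hom-alg} directly applies, and verifying that the relevant map $\alpha$ is exactly the one whose $F$-image is visibly a monomorphism (as opposed to merely some map that happens to be injective before applying $F$). Everything else — right-exactness of tensoring, the coefficient computation $\field_p \otimes_{\bpn[n+1]^*}\bpn^* \cong \field_p$, and the bookkeeping of which summand is trivial — is routine, and the cofibre sequence (\ref{eqn:cofib_BPn}) together with the short exact sequence (\ref{eqn:ses_fund}) should supply the algebra needed to pin down the square.
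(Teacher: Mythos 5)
Your overall strategy --- package the surjectivity of $\sigma_n$ into a cartesian--cocartesian square and apply Lemma \ref{lem:hom-alg} with $F=\field_p\otimes -$, using triviality of $\vtor$ to verify the monomorphism hypothesis --- is exactly the paper's, but the step you flag as your ``main obstacle'' is the whole content of the argument, and your proposal does not resolve it. The correct square is not built from $\bpn[n+1]^*(X)$ itself: the short exact sequence (\ref{eqn:ses_fund}) shows that $\bpn[n+1]^*(X)/v_{n+1}$ \emph{injects} into $\bpn^*(X)$, so one takes the square of submodules of $\bpn^*(X)$ with vertices $K_n:=(\bpn[n+1]^*(X)/v_{n+1})\cap\vtor$, $\vtor$, $\bpn[n+1]^*(X)/v_{n+1}$ and $\bpn^*(X)$, in which every map is a monomorphism; it is cartesian because $K_n$ is the intersection, and cocartesian precisely because $\sigma_n$ is surjective. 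Passing to the quotient by $v_{n+1}$ costs nothing for the conclusion, since $\field_p\otimes_{\bpn[n+1]^*}\bpn[n+1]^*(X)\cong\field_p\otimes_{\bpn^*}\big(\bpn[n+1]^*(X)/v_{n+1}\big)$.

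The map $\alpha$ whose $F$-image must be shown to be a monomorphism is $K_n\hookrightarrow\vtor$, and the reason $F(\alpha)$ is injective is that $K_n$, being a submodule of the trivial $\bpn^*$-module $\vtor$, is itself trivial, so $F$ acts as the identity on both source and target. Your justification addresses a different map ($\vtor\hookrightarrow\bpn[n+1]^*(X)\oplus\vtor$, which is not a leg of the square) by a different mechanism (splitness of a direct-sum inclusion); neither applies to $K_n\hookrightarrow\vtor$, which has no reason to be split. Moreover, if you keep $\bpn[n+1]^*(X)$ itself as a vertex, the leg $A\to\vtor$ of the resulting pullback square is no longer injective (its kernel is $v_{n+1}\bpn[n+1]^*(X)$) and $A$ is no longer a trivial module, so the ``$F$ does nothing'' argument breaks down there. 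Once the square and $\alpha$ are identified correctly, the rest of your outline (right exactness of the tensor product and the base-change identification of $\field_p\otimes_{\bpn[n+1]^*}(-)$ with $\field_p\otimes_{\bpn^*}(-)$) is routine and correct.
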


\begin{proof}
 The surjection $\sigma_n$ induces a short exact sequence
 \[
  0 
  \rightarrow 
  K_n 
  \rightarrow 
  (\bpn[n+1]^*(X) /v_{n+1})
  \oplus 
  \vtor
  \twoheadrightarrow 
  \bpn^* (X)
  \rightarrow
  0
 \]
of $\bpn^*$-modules, corresponding to a cartesian and cocartesian diagram (of
monomorphisms)
\[
 \xymatrix{
 K_n
 \ar@{^(->}[r]
 \ar@{^(->}[d]
 &
 \vtor
  \ar@{^(->}[d]
  \\
  \bpn[n+1]^*(X) /v_{n+1}
   \ar@{^(->}[r]
   &
   \bpn^*(X).
 }
\]
Lemma \ref{lem:hom-alg} applied to the right exact functor $\field_p
\otimes_{\bpn^*} -$ implies that
 $\field_p \otimes_{\bpn[n+1]^*}\bpn[n+1]^*(X) \rightarrow \field_p
\otimes_{\bpn^*}\bpn^* (X)$ is  injective, since $\vtor$ is a trivial
$\bpn^*$-module, so that $\field_p\otimes_{\bpn^*} (K_n
\hookrightarrow \vtor)$ identifies with the monomorphism $K_n \rightarrow
\vtor$.
\end{proof}

The method of proof can also be applied to consider the morphism
\[
 \bpn^* \otimes _{BP^*} BP^* (X) \hookrightarrow \bpn^* (X)
\]
 induced by $\rho_n : BP \rightarrow \bpn$.

\begin{prop}
 \label{prop:BP_quotient_injectivity}
 For $X$  a  spectrum and $n \in \nat$ such that 
 \begin{enumerate}
\item 
$\bpn^* \otimes _{BP^*} BP^* (X) \hookrightarrow \bpn^* (X)$ is injective; 
  \item 
 $\vtor = \mathrm{Ker} (v_n)$;
  \item
  $\tilde{\sigma}_n : BP^* (X) \oplus \vtor \twoheadrightarrow \bpn^* (X)$ is
surjective;
 \end{enumerate}
the  morphism induced by $\rho_{n-1}$:
\[
 \bpn[n-1]^* \otimes_{BP^*} BP^* (X)
 \rightarrow 
\bpn[n-1]^* (X)
\]
is injective.
\end{prop}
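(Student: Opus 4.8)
The strategy is to mimic the proof of Proposition \ref{prop:quotient_injectivity}, replacing the single mod-$v_{n+1}$ reduction with the iterated reduction from $BP$ to $\bpn[n-1]$, and feeding the hypotheses into Lemma \ref{lem:hom-alg}. The key is to factor the morphism in question through $\bpn^*(X)$ and to produce a cartesian/cocartesian square whose image under an appropriate right-exact base-change functor stays exact.

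First I would record the factorization $\rho_{n-1} = \rho^{n}_{n-1} \circ \rho_n : BP \rightarrow \bpn \rightarrow \bpn[n-1]$, which gives a commuting triangle of base-changed maps
\[
 \bpn[n-1]^* \otimes_{BP^*} BP^*(X) \rightarrow \bpn[n-1]^* \otimes_{\bpn^*} \bpn^*(X) \rightarrow \bpn[n-1]^*(X),
\]
the middle term obtained by applying $\bpn[n-1]^* \otimes_{\bpn^*} -$ to the map $\bpn^* \otimes_{BP^*} BP^*(X) \rightarrow \bpn^*(X)$. Since that map is injective by hypothesis (1), and $\bpn[n-1]^*$ is a quotient ring of $\bpn^*$ (namely $\bpn^*/v_n$), the question reduces to: (a) showing that $\bpn[n-1]^* \otimes_{BP^*} BP^*(X) \rightarrow \bpn[n-1]^* \otimes_{\bpn^*} \bpn^*(X)$ is injective, which is a base change of an injection along $\bpn^* \twoheadrightarrow \bpn[n-1]^*$ and so will follow once we know the relevant Tor-term vanishes; and (b) showing that $\bpn[n-1]^* \otimes_{\bpn^*} \bpn^*(X) \rightarrow \bpn[n-1]^*(X)$ is injective, i.e.\ that reduction mod $v_n$ of $\bpn^*(X)$ injects into $\bpn[n-1]^*(X)$.

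For part (b), I would use hypotheses (2) and (3). The surjection $\tilde{\sigma}_n : BP^*(X) \oplus \vtor \twoheadrightarrow \bpn^*(X)$ together with $\vtor = \mathrm{Ker}(v_n)$ gives, as in Proposition \ref{prop:quotient_injectivity}, a cartesian and cocartesian square of monomorphisms with corner $K_n = \ker \tilde{\sigma}_n$, one edge being $\mathrm{Ker}(v_n) \hookrightarrow \bpn^*(X)$ and the opposite edge $\bpn^* \otimes_{BP^*} BP^*(X) \hookrightarrow \bpn^*(X)$ (injective by hypothesis (1)). Applying the right-exact functor $\bpn[n-1]^* \otimes_{\bpn^*} - = (-)/v_n$ and invoking Lemma \ref{lem:hom-alg}, the conclusion that $\bpn[n-1]^* \otimes_{\bpn^*}\bpn^*(X) \rightarrow \bpn[n-1]^*(X)$ is injective will follow, \emph{provided} $(-)/v_n$ applied to the monomorphism $K_n \hookrightarrow \mathrm{Ker}(v_n)$ remains a monomorphism. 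Here I expect to use that $\mathrm{Ker}(v_n)$ is killed by $v_n$, so $\mathrm{Ker}(v_n)/v_n = \mathrm{Ker}(v_n)$ and $K_n / v_n \rightarrow \mathrm{Ker}(v_n)$ is the natural surjection followed by the inclusion; injectivity needs $v_n K_n = K_n \cap v_n(\ldots)$, which should come from the fact that $v_n$ acts freely on the relevant part of $BP^*(X) \oplus \vtor$ — this is the point that the hypothesis $\vtor = \mathrm{Ker}(v_n)$ (rather than merely $\vtor \supset \mathrm{Ker}(v_n)$) is designed to make work.

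The main obstacle I anticipate is exactly this last diagram-chase: controlling the behaviour of $K_n = \ker(\tilde\sigma_n)$ under reduction mod $v_n$, and in particular verifying that $(-)/v_n$ preserves the monomorphism $K_n \hookrightarrow \vtor$. The cleanest way around it is probably to observe that, because $\vtor$ is $v_n$-torsion, the composite $K_n \hookrightarrow BP^*(X)\oplus \vtor$ has image meeting the summand $\vtor$ only in a controlled way, so that $K_n$ is, up to the relevant torsion, a submodule of $BP^*(X)$ on which $v_n$ acts without $K_n$-torsion; then apply Lemma \ref{lem:hom-alg} to the functor $(-)/v_n$ exactly as in Proposition \ref{prop:quotient_injectivity}. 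Once (a) and (b) are in hand, composing the two injections gives the desired injectivity of $\bpn[n-1]^* \otimes_{BP^*} BP^*(X) \rightarrow \bpn[n-1]^*(X)$, completing the proof.
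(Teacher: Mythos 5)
Your decomposition of $\rho_{n-1}$ through $\bpn[n-1]^*\otimes_{\bpn^*}\bpn^*(X)=\bpn^*(X)/v_n$ is exactly the one the paper uses (it is the content of diagram (\ref{eqn:Ln_ses})), and all the right ingredients appear in your sketch; but the two halves of the argument are attached to the wrong sub-claims, and as written neither half is actually established. Your step (b) --- injectivity of $\bpn^*(X)/v_n\rightarrow\bpn[n-1]^*(X)$ --- needs no argument and none of the hypotheses: it is the left-hand map of the universal coefficient short exact sequence (\ref{eqn:ses_fund}) (with $n$ in place of $n+1$), hence a monomorphism for purely topological reasons. Moreover the cartesian/cocartesian square together with Lemma \ref{lem:hom-alg} could not prove it anyway: applying $F=\bpn[n-1]^*\otimes_{\bpn^*}-$ to that square only produces maps with target $F(\bpn^*(X))=\bpn^*(X)/v_n$, never $\bpn[n-1]^*(X)$, so the lemma says nothing about the comparison of those two objects.

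What the square \emph{does} prove is your step (a), which you leave resting on an unverified ``Tor-term vanishes''. That Tor term does not vanish: the cokernel of $\bpn^*\otimes_{BP^*}BP^*(X)\hookrightarrow\bpn^*(X)$ is $\vtor/L_n$, which is annihilated by $v_n$ by hypothesis (2), so $\mathrm{Tor}_1^{\zed_{(p)}[v_n]}(\zed_{(p)},-)$ of it is everything. The correct mechanism is Lemma \ref{lem:hom-alg}: by hypothesis (1) and cartesianness, the kernel $L_n$ of $(\bpn^*\otimes_{BP^*}BP^*(X))\oplus\vtor\twoheadrightarrow\bpn^*(X)$ has \emph{both} components injective, so $L_n$ embeds in $\vtor=\mathrm{Ker}(v_n)$ and is killed by $v_n$; hence $F(L_n\hookrightarrow\vtor)$ is the original inclusion, a monomorphism, and the lemma gives that $F$ of the opposite edge, $\bpn[n-1]^*\otimes_{BP^*}BP^*(X)\rightarrow\bpn^*(X)/v_n$, is injective. (Your closing suggestion that one needs ``$v_n$ acts freely on the relevant part of $BP^*(X)\oplus\vtor$'' is the opposite of the mechanism actually used, which is that $v_n$ annihilates $L_n$ and $\vtor$; and the corner of the square must be $\bpn^*\otimes_{BP^*}BP^*(X)$ rather than $BP^*(X)$, since $\ker\tilde{\sigma}_n\rightarrow\vtor$ is not injective.) With these reassignments your outline becomes precisely the paper's proof.
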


\begin{proof}
 The hypotheses provide a short exact sequence 
\[
 0 
\rightarrow 
L_n 
\rightarrow 
(\bpn^* \otimes _{BP^*} BP^* (X)) \oplus \vtor
\rightarrow
\bpn^* (X)
\rightarrow 
0
\]
such that the components $L_n \rightarrow \vtor$ and $L_n \rightarrow \bpn^*
\otimes _{BP^*} BP^* (X)$ are injective. As in the proof of Proposition
\ref{prop:quotient_injectivity}, applying $\bpn[n-1]^* \otimes_{\bpn^*} -$
(which identifies with  $\zed_{(p)} \otimes_{\zed_{(p)} [v_n] } - $) yields the
horizontal short exact sequence below:
\begin{eqnarray}
\label{eqn:Ln_ses}
\noindent
\\
\nonumber
 \xymatrix{
L_n 
\ar[r]
&
\bpn[n-1]^*\hspace{-3pt}\otimes _{BP^*}\hspace{-3pt} BP^* (X) \oplus \vtor 
\ar[r]
&
\bpn[n-1]^* \hspace{-3pt}\otimes _{\bpn^*}\hspace{-3pt} \bpn^* (X)
\ar@{^(.>}[d]
\\
&
&
\bpn[n-1]^* (X),
} 
\end{eqnarray}
where the additional vertical inclusion is induced by $\rho_{n-1}^n$. The  injectivity of
the left hand horizontal morphism follows from the fact that multiplication by
$v_n$ acts trivially on
$\vtor$, so that   $\zed_{(p)}
\otimes_{\zed_{(p)} [v_n] } ( L_n \hookrightarrow \vtor)$ identifies with the
inclusion $L_n \hookrightarrow \vtor$.

By Lemma \ref{lem:hom-alg}, it follows  that $$\bpn[n-1]^* \otimes _{BP^*} BP^* (X) \rightarrow
\bpn[n-1]^* \otimes _{\bpn^*} \bpn^* (X)$$
is injective.  Composing with the vertical
monomorphism completes the proof.
\end{proof}

\section{Controlling the $v_n$-torsion in odd degrees}
\label{sect:noeth}

\subsection{The bounded torsion case}

The following result shows how the $v_n$-torsion can be understood in odd
degrees under suitable hypotheses. 
This will be applied in Section \ref{sect:trivial_torsion_thm} to deduce the
main general result of the paper, Theorem \ref{thm:general}.

\begin{prop}
\label{prop:bounded_torsion_trivial}
 Let $X$ be a spectrum and  $n \in \nat$ be  such that
$H\zed_{(p)}^\mathrm{odd}(X)=\bpn[0] ^\mathrm{odd} (X) \hookrightarrow H\field_p
^\mathrm{odd}(X)$ and there exists $N \in \nat$ such that, for $0 \leq j \leq
n$:
\begin{enumerate}
\item
$v_j^{N} \bpn[j]^\mathrm{odd} (X) =0$;
 \item 
 $\mathrm{image}(\rho^j_{-1})^{\mathrm{odd}} =  \mathrm{Im}
(Q_0 \ldots Q_j) ^\mathrm{odd} \subset H\field_p ^\mathrm{odd}(X) $;
\end{enumerate}
then, for $0 \leq j \leq n$, $\rho^j_{-1}$ induces an isomorphism:
\[
 \bpn[j]^\mathrm{odd} (X) \cong \mathrm{Im} (Q_0\ldots Q_j)^\mathrm{odd}.
\]
In particular, $\bpn[j] ^\mathrm{odd} (X) $ is a trivial $\bpn^*$-module.
\end{prop}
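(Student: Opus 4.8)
The plan is to prove the statement by downward induction on $j$, from $j = n$ down to $j = 0$, using the two hypotheses together with the results of Sections \ref{sect:prelim} and \ref{sect:thom_reduct}. Throughout, one works only with the odd-degree part of the cohomology.

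\textbf{Setting up the induction.} First I would observe that, by Proposition \ref{prop:indeterminacy}, $\rho^j_{-1}$ always surjects onto $\mathrm{image}(\rho^j_{-1})^\mathrm{odd}$, which by hypothesis (2) equals $\mathrm{Im}(Q_0\ldots Q_j)^\mathrm{odd}$; so the content is that $\rho^j_{-1}$ is \emph{injective} in odd degrees, i.e. that the kernel vanishes. The kernel of $\rho^j_{-1} = \rho^{j-1}_{-1}\rho^j_{j-1}$ is controlled by the cofibre sequence (\ref{eqn:cofib_BPn}): an element of $\bpn[j]^\mathrm{odd}(X)$ killed by $\rho^j_{-1}$ maps, under $\rho^j_{j-1}$, into the kernel of $\rho^{j-1}_{-1}$ in $\bpn[j-1]^\mathrm{odd}(X)$.

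\textbf{The base case $j = 0$.} Here $\bpn[0] = H\zed_{(p)}$ and $\rho^0_{-1}: H\zed_{(p)}^*(X) \to H\field_p^*(X)$ is the mod-$p$ reduction; the hypothesis states precisely that $H\zed_{(p)}^\mathrm{odd}(X) \hookrightarrow H\field_p^\mathrm{odd}(X)$ is injective, and its image is $\mathrm{Im}(Q_0)^\mathrm{odd} = \mathrm{image}(\rho^0_{-1})^\mathrm{odd}$ by hypothesis (2). So the base case is immediate, and it also shows $H\zed_{(p)}^\mathrm{odd}(X)$ is a trivial $\bpn[0]^*$-module (it is $p$-torsion-free and $p$ acts as $0$ on it, being injected into an $\field_p$-vector space, which forces $p$ to act trivially).

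\textbf{The inductive step.} Suppose the isomorphism $\bpn[j-1]^\mathrm{odd}(X) \cong \mathrm{Im}(Q_0\ldots Q_{j-1})^\mathrm{odd}$ has been established, so $\bpn[j-1]^\mathrm{odd}(X)$ is a trivial $\bpn^*$-module, in particular $v_j$ acts as zero on it. Consider $u \in \bpn[j]^\mathrm{odd}(X)$ with $\rho^j_{-1}(u) = 0$. Then $\rho^j_{j-1}(u) \in \mathrm{Ker}(\rho^{j-1}_{-1})^\mathrm{odd} = 0$ by the inductive hypothesis, so by the long exact sequence of (\ref{eqn:cofib_BPn}) (with $n$ replaced by $j$) we have $u = v_j \cdot u'$ for some $u' \in \bpn[j]^\mathrm{odd}(X)$. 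Now I would iterate: I claim each such $u'$ can again be taken with $\rho^{j}_{j-1}(u')$ landing in something $v_j$-divisible, hence $u$ is infinitely $v_j$-divisible; more precisely, I would show directly that $u$ lies in $v_j^N\bpn[j]^\mathrm{odd}(X)$ and invoke hypothesis (1). The cleanest route: $\rho^j_{j-1}(u')\in\bpn[j-1]^\mathrm{odd}(X)$ on which $v_j$ acts as $0$, and we want to see $u'$ itself is $v_j$-divisible. This is exactly the situation of Proposition \ref{prop:v_n-torsion}(2) applied at level $j$ — but one must check its hypothesis $\bpn[j-1]^\mathrm{odd}(X) = 0$, which is \emph{not} assumed. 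So instead I would argue by exhibiting $\mathrm{Ker}(\rho^j_{-1})^\mathrm{odd}$ as $v_j$-divisible: since $\rho^{j-1}_{-1}$ is injective on odd degrees (inductive hypothesis) and $\rho^j_{-1} = \rho^{j-1}_{-1}\rho^j_{j-1}$, we get $\mathrm{Ker}(\rho^j_{-1})^\mathrm{odd} = \mathrm{Ker}(\rho^j_{j-1})^\mathrm{odd} = v_j\,\bpn[j]^\mathrm{odd}(X)$ from (\ref{eqn:cofib_BPn}); but then multiplication by $v_j$ maps $\bpn[j]^\mathrm{odd}(X)$ onto $\mathrm{Ker}(\rho^j_{-1})^\mathrm{odd}$, and since $v_j$ kills $\bpn[j-1]^\mathrm{odd}(X) \supset \mathrm{image}\,\rho^j_{j-1}$, multiplication by $v_j$ factors through $\mathrm{Ker}(\rho^j_{j-1})^\mathrm{odd} = \mathrm{Ker}(\rho^j_{-1})^\mathrm{odd}$, i.e. $v_j$ maps $\mathrm{Ker}(\rho^j_{-1})^\mathrm{odd}$ onto itself. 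Iterating $N$ times, $\mathrm{Ker}(\rho^j_{-1})^\mathrm{odd} = v_j^N \mathrm{Ker}(\rho^j_{-1})^\mathrm{odd} \subseteq v_j^N \bpn[j]^\mathrm{odd}(X) = 0$ by hypothesis (1). Hence $\rho^j_{-1}$ is injective in odd degrees, completing the step. Finally, the resulting isomorphism $\bpn[j]^\mathrm{odd}(X) \cong \mathrm{Im}(Q_0\ldots Q_j)^\mathrm{odd}$ makes $\bpn[j]^\mathrm{odd}(X)$ a submodule of the $\field_p$-vector space $H\field_p^\mathrm{odd}(X)$; since the $\bpn^*$-module structure on the latter factors through $\bpn^* \twoheadrightarrow \field_p$ (all $v_i$, $i\ge 1$, and $p$ act as $0$ on mod-$p$ cohomology in the relevant sense), and $\rho^j_{-1}$ is $\bpn^*$-linear, $\bpn[j]^\mathrm{odd}(X)$ is a trivial $\bpn^*$-module.

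\textbf{Main obstacle.} The delicate point is the $v_j$-divisibility argument in the inductive step: ensuring that $\mathrm{Ker}(\rho^j_{-1})^\mathrm{odd}$ is not merely once $v_j$-divisible but that multiplication by $v_j$ is \emph{surjective} onto it, so that the torsion bound $v_j^N\bpn[j]^\mathrm{odd}(X) = 0$ can be brought to bear. This hinges on the identification $\mathrm{Ker}(\rho^j_{-1})^\mathrm{odd} = \mathrm{Ker}(\rho^j_{j-1})^\mathrm{odd}$, which in turn requires the injectivity of $\rho^{j-1}_{-1}$ in odd degrees — precisely the inductive hypothesis — and on the vanishing of $v_j$ on $\bpn[j-1]^\mathrm{odd}(X)$, which follows since that module embeds in mod-$p$ cohomology. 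Assembling these compatibilities correctly, rather than any single computation, is where care is needed.
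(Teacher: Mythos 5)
Your overall architecture is the paper's: an upward induction on $j$ (despite announcing a ``downward'' induction, your base case is $j=0$ and the step passes from $j-1$ to $j$), the reduction of the claim to injectivity of $(\rho^j_{-1})^{\mathrm{odd}}$, and the correct identifications $\mathrm{Ker}(\rho^j_{-1})^{\mathrm{odd}}=\mathrm{Ker}(\rho^j_{j-1})^{\mathrm{odd}}=v_j\,\bpn[j]^{\mathrm{odd}}(X)$ using the inductive hypothesis and the cofibre sequence (\ref{eqn:cofib_BPn}). However, the pivotal assertion --- that multiplication by $v_j$ maps $K:=\mathrm{Ker}(\rho^j_{-1})^{\mathrm{odd}}$ \emph{onto itself} --- is a non sequitur as you have argued it. Writing $M=\bpn[j]^{\mathrm{odd}}(X)$, you have established $v_jM=K$; the claim $v_jK=K$ is the statement $v_j^2M=v_jM$, which does not follow (for an abstract $\zed_{(p)}[v]$-module with $\mathrm{Ker}(\rho)=vM$, e.g.\ $M=\zed_{(p)}[v]/(v^2)$ with $\rho$ the projection to $M/vM$, one has $v^2M=0\neq vM$). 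Your ``factors through'' sentence only re-derives the inclusion $v_jM\subseteq K$. What must be shown is that every $x_0\in K$ admits a $v_j$-preimage lying \emph{in $K$}, and this is precisely where hypothesis (2) --- which your inductive step never invokes --- is indispensable; its absence from your argument is the symptom of the gap.

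The repair is the content of the paper's proof: given $x_0\in K$ with $x_0=v_jx'_1$, hypothesis (2) gives $\rho^j_{-1}(x'_1)=Q_0\ldots Q_j\,y_1$ for some $y_1\in H\field_p^*(X)$ (here one needs the containment $\mathrm{image}(\rho^j_{-1})^{\mathrm{odd}}\subseteq\mathrm{Im}(Q_0\ldots Q_j)^{\mathrm{odd}}$, the non-automatic half of (2)). Setting $x_1:=x'_1\mp(q_j\ldots q_0)y_1$, Lemma \ref{lem:Q_compatible} gives $\rho^j_{-1}(x_1)=0$, so $x_1\in K$, while Lemma \ref{lem:q_vn-torsion} shows $(q_j\ldots q_0)y_1\in\mathrm{Ker}(v_j)$, whence $v_jx_1=v_jx'_1=x_0$. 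Iterating produces, for a hypothetical $0\neq x_0\in K$, elements $x_s$ with $v_j^sx_s=x_0$ for all $s$, contradicting $v_j^N\bpn[j]^{\mathrm{odd}}(X)=0$. With this correction inserted at the divisibility step, the rest of your write-up (base case, identification of the image via Proposition \ref{prop:indeterminacy}, triviality of the $\bpn^*$-module structure) is sound and agrees with the paper.
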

 
 \begin{proof}
The result is proved by upward induction upon $j$, starting with
$j=0$, which forms part of the hypothesis.
For the inductive step, suppose the result established for smaller values of
$j$. We require to prove that 
the reduction $\bpn[j]^* (X)\rightarrow H\field_p^* (X)$ is injective in odd
degrees. 

By the inductive hypothesis, the kernel of  ${\rho^j_{-1}} :
\bpn[j]^\mathrm{odd}
(X) {\rightarrow} H\field_p^\mathrm{odd} (X)$ coincides with the kernel 
of  $\rho^j_{j-1} : \bpn[j]^\mathrm{odd} (X) {\rightarrow} \bpn[j-1]
^\mathrm{odd}
(X)$, which is the image of multiplication by $v_j$ (restricted to odd degrees).
Hence, if $x=x_0$ (of odd degree)  is in the kernel of
$\rho^j_{-1}$, there is an odd degree element 
$x'_1$ such that $v_j x'_1= x_0$. Now $\rho^j_{-1}(x'_1)= Q_0 \ldots Q_j
y_1$ for some $y_1
\in H\field_p^* (X)$, by
the hypothesis on the image of $(\rho^j_{-1})^\mathrm{odd}$. Thus, consider the
element $x_1:= x'_1 - (\pm)(q_j\ldots q_0)y_1$, where the sign is taken so that
$\rho^j_{-1}(\pm(q_j\ldots q_0)y_1)=  Q_0 \ldots Q_j y_1$ (using Lemma
\ref{lem:Q_compatible});
 by construction $\rho^j_{-1}  (x_1) =0$ and $v_j x_1= v_j x'_1 = x_0$.

 Suppose  $x_0 \neq 0$, then  $ x_1 \neq 0$ and the argument can be repeated 
to form a sequence of non-trivial elements $x_s \in \bpn[j] ^\mathrm{odd} (X)$
 such that $v_j ^s x_s =x_0\neq 0$, $s\in \nat$. This contradicts the hypothesis
that
the $\bpn[j]^{\mathrm{odd}} (X)$ is bounded $v_j$-torsion; hence 
$(\rho^j_{-1})^\mathrm{odd}$ is injective.
 \end{proof}

\begin{rem}
\label{rem:Q_0_hypotheses}
\ 
 \begin{enumerate}
  \item 
The hypothesis on the image of $(\rho^j _{-1}) ^\mathrm{odd}$ is implied, for
example,  by the condition
$
 \hq (X, j)^\mathrm{odd} =0.
$ 
\item
If $H\field_p^* (X)$ is $Q_0$-acyclic, then $\bpn[0]^* (X) = H\zed_{(p)}^* (X)$
embeds in 
$H\field_p^*(X)$; in particular, the required embedding hypothesis holds in odd
degrees.
 \end{enumerate}
\end{rem}

\subsection{The Noetherian case}

When $X$ is a space, the bounded torsion hypothesis required in Proposition
\ref{prop:bounded_torsion_trivial}
can sometimes be provided by exploiting the algebra structure of $\bpn^* (X_+)$,
in particular in the presence of a finiteness hypothesis.

\begin{prop}
\label{prop:noeth}
 Let $X$ be a space and $n \in \nat$ such that $BP^\mathrm{odd} (X)=0$, 
$\bpn^* (X_+)$ is a Noetherian algebra and $\bpn^\mathrm{odd} (X)=0$. 

Then, for all $j \leq n$, $\bpn[j]^\mathrm{odd} (X)$ is a Noetherian $\bpn^*
(X_+)$-module and there exists $N \in \nat$ such that 
\[
 v_i^{N} \bpn[j]^\mathrm{odd} (X) =0.
\]
for all $0 \leq i \leq j$.
\end{prop}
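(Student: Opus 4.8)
The plan is to prove the statement by downward induction on $j$, from $j = n$ down to $j = 0$, using the cofibre sequences (\ref{eqn:cofib_BPn}) to pass from $\bpn[j]$ to $\bpn[j-1]$. The base case $j = n$ is immediate: $\bpn^*(X_+)$ is Noetherian by hypothesis, it is a module over itself, and $\bpn^{\mathrm{odd}}(X) = 0$ so the vanishing of $v_i^N \bpn^{\mathrm{odd}}(X)$ holds trivially for any $N$ (take $N = 1$, say). So the content is entirely in the inductive step.

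For the inductive step, suppose the claim is known for some $j$ with $0 < j \leq n$, and consider $\bpn[j-1]$. The long exact sequence associated to (\ref{eqn:cofib_BPn}) (with $n$ replaced by $j$) reads
\[
\cdots \rightarrow \bpn[j]^{*}(X) \xrightarrow{v_j} \bpn[j]^{*}(X) \xrightarrow{\rho^j_{j-1}} \bpn[j-1]^{*}(X) \xrightarrow{q_j} \bpn[j]^{*+|v_j|+1}(X) \rightarrow \cdots
\]
so there is a short exact sequence of $\bpn^*(X_+)$-modules
\[
0 \rightarrow \bpn[j]^{*}(X)/v_j \rightarrow \bpn[j-1]^{*}(X) \rightarrow \mathrm{Ker}(v_j)^{*+|v_j|+1} \subset \bpn[j]^{*+|v_j|+1}(X) \rightarrow 0.
\]
By the inductive hypothesis $\bpn[j]^{\mathrm{odd}}(X)$ is Noetherian over $\bpn^*(X_+)$; since the latter is Noetherian, submodules and quotients of Noetherian modules are Noetherian, so both outer terms of the sequence are Noetherian $\bpn^*(X_+)$-modules in odd degrees, and hence so is $\bpn[j-1]^{\mathrm{odd}}(X)$ in the middle. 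This proves the first assertion. For the bounded-torsion statement: the submodule $\mathrm{Ker}(v_j)^{*+|v_j|+1}$ is annihilated by $v_j$ by definition, and the quotient $\bpn[j]^{\mathrm{odd}}(X)/v_j$ is a quotient of $\bpn[j]^{\mathrm{odd}}(X)$, which by the inductive hypothesis is annihilated by $v_i^N$ for all $0 \leq i \leq j$; in particular each such $v_i^N$ kills both outer terms in odd degrees. Thus $v_i^{2N}$ kills $\bpn[j-1]^{\mathrm{odd}}(X)$ for all $0 \leq i \leq j$ (and the case $i = j - 1$ is subsumed, since $j - 1 \leq j$); enlarging $N$ uniformly over the finitely many steps of the induction gives the required single $N$.

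The main obstacle, such as it is, is purely bookkeeping rather than conceptual: one must be careful that the $\bpn^*(X_+)$-module structures on $\bpn[j]^*(X)$ and $\bpn[j-1]^*(X)$ are compatible along $\rho^j_{j-1}$ (so that the short exact sequence above is genuinely one of $\bpn^*(X_+)$-modules, making the Noetherian conclusion legitimate) — this is guaranteed by the fact that all the spectra and reduction maps live in the category of $MU$-module ring spectra, so $\bpn[j-1]^*(X_+)$ is a $\bpn^*(X_+)$-algebra via $\rho^j_{j-1}$ and $\bpn[j-1]^*(X)$ is a module over it. The other point requiring a word is that the exponent $N$ can be taken uniform: since there are only finitely many values $j = 0, 1, \ldots, n$ and at each stage the exponent at worst doubles, starting from $N = 1$ at $j = n$ one may take $N = 2^{n}$ (or simply invoke finiteness of the induction). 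No use of the Thom reduction or of the $\hq$-machinery is needed here; this proposition is a soft finiteness statement that feeds the hypotheses of Proposition \ref{prop:bounded_torsion_trivial}.
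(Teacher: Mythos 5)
There is a genuine gap, and it comes from a parity slip. In the short exact sequence
\[
0 \rightarrow \bpn[j]^{*}(X)/v_j \rightarrow \bpn[j-1]^{*}(X) \rightarrow \mathrm{Ker}(v_j)^{*+|v_j|+1} \rightarrow 0
\]
the degree shift $|v_j|+1$ is \emph{odd}, so in odd degrees of $\bpn[j-1]^*(X)$ the third term is $\mathrm{Ker}(v_j)^{\mathrm{even}} \subset \bpn[j]^{\mathrm{even}}(X)$ --- it is the image under $q_j$ of $\bpn[j-1]^{\mathrm{odd}}(X)$ itself, not a subquotient of $\bpn[j]^{\mathrm{odd}}(X)$. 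Your inductive hypothesis concerns only $\bpn[j]^{\mathrm{odd}}(X)$, so it controls neither the Noetherian-ness nor the torsion of this term. For the Noetherian statement the repair is easy: run the downward induction on the whole graded module $\bpn[j]^*(X_+)$ (Noetherian for $j=n$ because $\bpn^*(X_+)$ is a Noetherian algebra over itself), and extract the odd part at the end; since $\bpn^*(X_+)$ is concentrated in even degrees, the odd part is a direct summand. For the torsion bound the problem is more serious: you assert that ``each such $v_i^N$ kills both outer terms in odd degrees'', but for $i<j$ you have only justified that $v_j$ kills $\mathrm{Ker}(v_j)$; there is no a priori reason for $v_i^N$ with $i<j$ to annihilate $\mathrm{Ker}(v_j)^{\mathrm{even}}$, and deducing it from the torsion of $\bpn[j-1]^{\mathrm{odd}}(X)$ (of which it is the image under $q_j$) is circular at this point of your induction.

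The paper's proof avoids the second issue entirely: Proposition \ref{prop:BPodd0_v_n-torsion} (via Wilson's low-degree surjectivity of $BP^*(X)\rightarrow\bpn[j]^*(X)$ and the hypothesis $BP^{\mathrm{odd}}(X)=0$) already shows that every \emph{element} of $\bpn[j]^{\mathrm{odd}}(X)$ is $v_i$-torsion for $0\leq i\leq j$; the uniform exponent $N$ then comes for free from the fact that $\bpn[j]^{\mathrm{odd}}(X)$ is finitely generated over the Noetherian algebra $\bpn^*(X_+)$ --- finitely many generators, each killed by some power of $v_i$. No exponent-doubling bookkeeping is needed, and no control of $\mathrm{Ker}(v_j)^{\mathrm{even}}$ is required. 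As written, your inductive step does not close; you should either invoke Proposition \ref{prop:BPodd0_v_n-torsion} together with finite generation, or supply a genuine argument for bounded $v_i$-torsion of $\mathrm{Ker}(v_j)$ in even degrees.
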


\begin{proof}
The fact that $\bpn[j]^\mathrm{odd}(X)$ is a Noetherian $\bpn^* (X_+)$-module is
proved by a standard downward induction 
upon $j$. Proposition \ref{prop:BPodd0_v_n-torsion} implies that, for $0 \leq i \leq j$,
$\bpn[j]^\mathrm{odd}(X)$ is  $v_i$-torsion. Since, for each $j$, 
$\bpn[j]^\mathrm{odd} (X)$ is finitely-generated over
$\bpn^* (X_+)$, there is a uniform bound on the torsion.
\end{proof}

\section{Criteria for trivial torsion}
\label{sect:trivial_torsion_thm}

The following is the main general result of the paper; it is applied in the 
following section to the case $X =BV$, for $V$ an elementary abelian $p$-group.

\begin{thm}
\label{thm:general}
 Let $X$ be a space and $n \in \nat$ for which the following hypotheses are
satisfied:
\begin{enumerate}
 \item 
$BP^\mathrm{odd}(X)= \bpn^\mathrm{odd} (X) =0$;
\item
$\bpn^* (X_+)$ is Noetherian;
\item
$\bpn[0]^* (X) \hookrightarrow H\field_p^* (X)$ is a monomorphism with image
$\mathrm{Im}(Q_0)$;
\item
$\hq (X,j)^\mathrm{odd}=0$ for $0 \leq j \leq  n$.
\end{enumerate}
Then, for $0 \leq j  \leq  n$, $\vtor[j]$ is a trivial $\bpn[j]^*$-module which
identifies as:
\begin{eqnarray*}
\vtor[j] & \cong &\mathrm{Im} (q_j \ldots
q_0) \subset \bpn[j]^* (X)
\\
&\cong &
\mathrm{Im} (Q_0 \ldots
Q_j) \subset H\field_p^* (X);
\end{eqnarray*}
 in particular $\bpn[j]^\mathrm{odd} (X) \cong \mathrm{Im}(Q_0\ldots
Q_j)^\mathrm{odd}$.

Moreover:
\begin{enumerate}
\item
the reduction map $\rho^{j}_{j-1}$ induces a monomorphism
$\vtor[j] \hookrightarrow \vtor[j-1]$, 
which corresponds to the natural inclusion 
$\mathrm{Im} (Q_0 \ldots Q_{j} ) \hookrightarrow \mathrm{Im} (Q_0\ldots
Q_{j-1})$;
\item
$\sigma_j : \bpn[j+1]^* (X) \oplus \vtor[j] \rightarrow \bpn[j]^* (X)$ is
surjective;
\item
the reduction map $\rho^j_{-1}$ induces a monomorphism 
\[
 \field_p \otimes _{\bpn[j]^*} \bpn[j]^* (X)
\hookrightarrow H\field_p^* (X).
\]
\end{enumerate}
If, furthermore, $\bpn[j]^* (X) \twoheadrightarrow \hq^* (X,j)$ is surjective
for
$0 \leq j \leq n$, then 
 the reduction map $\rho^j_{-1}$ induces an isomorphism 
\[
 \field_p \otimes _{\bpn[j]^*} \bpn[j]^* (X) \cong \bigcap _{i=0}^j
\mathrm{Ker}(Q_i) \subset H\field_p^* (X). 
\]
\end{thm}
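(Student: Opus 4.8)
The plan is to prove Theorem~\ref{thm:general} by an upward induction on $j$, using the propositions of Sections~\ref{sect:general}--\ref{sect:noeth} as building blocks, and to make the final statement a corollary of the main body of the theorem together with Corollary~\ref{cor:surjectivity}. First I would observe that the four standing hypotheses already give us a lot: hypothesis~(1) lets us apply Propositions~\ref{prop:BPodd0_v_n-torsion} and \ref{prop:noeth}, so that $\bpn[j]^{\mathrm{odd}}(X)$ is a Noetherian $\bpn^*(X_+)$-module which is $v_i$-torsion for all $0\le i\le j$; hence there is a uniform $N$ with $v_j^N\bpn[j]^{\mathrm{odd}}(X)=0$. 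Hypotheses~(3) and~(4), together with Remark~\ref{rem:Q_0_hypotheses}(1), supply exactly the two conditions needed to run Proposition~\ref{prop:bounded_torsion_trivial}, which yields the isomorphism $\bpn[j]^{\mathrm{odd}}(X)\cong \mathrm{Im}(Q_0\ldots Q_j)^{\mathrm{odd}}$ for $0\le j\le n$ and the triviality of $\bpn[j]^{\mathrm{odd}}(X)$ as a $\bpn^*$-module. Since $\bpn^{\mathrm{even}}$ of the torsion vanishes by Proposition~\ref{prop:v_n-torsion}(1) (applied with the hypothesis $\bpn[j-1]^{\mathrm{odd}}(X)=0$, which is now available inductively), we get $\vtor[j]=\vtor[j]^{\mathrm{odd}}=\bpn[j]^{\mathrm{odd}}(X)$ and so $\vtor[j]$ is trivial and identified with $\mathrm{Im}(Q_0\ldots Q_j)$; tracking the factorization of $Q_0\ldots Q_j$ through $q_j\ldots q_0$ in Lemma~\ref{lem:Q_compatible} gives the middle identification with $\mathrm{Im}(q_j\ldots q_0)$.

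Next I would establish the three numbered "Moreover" assertions. The monomorphism $\vtor[j]\hookrightarrow\vtor[j-1]$ induced by $\rho^j_{j-1}$ is immediate once both sides are identified with images of iterated Milnor operations inside $H\field_p^*(X)$, since $\rho^{j-1}_{-1}\rho^j_{j-1}=\rho^j_{-1}$ intertwines these identifications; naturality forces the map to be the evident inclusion $\mathrm{Im}(Q_0\ldots Q_j)\hookrightarrow\mathrm{Im}(Q_0\ldots Q_{j-1})$, and in particular it is injective. For surjectivity of $\sigma_j$, I would invoke Proposition~\ref{prop:sigma_surjectivity}: its hypotheses are that $\bpn[j]^{\mathrm{odd}}(X)=\vtor[j]^{\mathrm{odd}}$ (just proved) and that $\rho^{j+1}_{-1}$ restricts to an isomorphism $\bpn[j+1]^{\mathrm{odd}}(X)\xrightarrow{\cong}\mathrm{Im}(Q_0\ldots Q_{j+1})^{\mathrm{odd}}$, which is the $j{+}1$ case of the isomorphism from the first part of the theorem — so this is where the induction is genuinely used, stepping from $j+1$ down to $j$. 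The monomorphism $\field_p\otimes_{\bpn[j]^*}\bpn[j]^*(X)\hookrightarrow H\field_p^*(X)$ then follows by iterating Proposition~\ref{prop:quotient_injectivity}: triviality of $\vtor[j]$ and surjectivity of $\sigma_j$ give injectivity of $\field_p\otimes_{\bpn[j+1]^*}\bpn[j+1]^*(X)\to\field_p\otimes_{\bpn[j]^*}\bpn[j]^*(X)$, and composing these down to $j=-1$ (where $\field_p\otimes_{H\field_p^*}H\field_p^*(X)=H\field_p^*(X)$) gives the claim; hypothesis~(3) handles the bottom step $j=0$.

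For the final statement, I would argue as follows. By Proposition~\ref{prop:indeterminacy}, $\rho^j_{-1}$ lands in $\bigcap_{i=0}^j\mathrm{Ker}(Q_i)$, and it kills $v_i\bpn[j]^*(X)$ for $0\le i\le j$ since each $v_i$ maps to $0$ in $H\field_p^*$; hence the monomorphism of the previous paragraph is a monomorphism $\field_p\otimes_{\bpn[j]^*}\bpn[j]^*(X)\hookrightarrow\bigcap_{i=0}^j\mathrm{Ker}(Q_i)$. It remains to show it is onto. The added hypothesis is that $\bpn[j]^*(X)\twoheadrightarrow\hq^*(X,j)$ is surjective; by Corollary~\ref{cor:surjectivity} this is equivalent to $\rho^j_{-1}\colon\bpn[j]^*(X)\to H\field_p^*(X)$ surjecting onto $\bigcap_{i=0}^j\mathrm{Ker}(Q_i)$. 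Composing the surjection $\bpn[j]^*(X)\twoheadrightarrow\field_p\otimes_{\bpn[j]^*}\bpn[j]^*(X)$ with the monomorphism into $\bigcap_{i=0}^j\mathrm{Ker}(Q_i)$ recovers $\rho^j_{-1}$ (corestricted), which is surjective onto that target; therefore the monomorphism is also surjective, hence an isomorphism. I expect the main obstacle to be bookkeeping the induction cleanly: the surjectivity of $\sigma_j$ needs the order-$(j{+}1)$ isomorphism while the order-$j$ isomorphism feeds into the triviality and injectivity statements at level $j$, so one must be careful to run Proposition~\ref{prop:bounded_torsion_trivial} once for all $j\le n$ up front (it is itself an upward induction and only needs hypotheses~(1),(3),(4) plus the torsion bound from Proposition~\ref{prop:noeth}), and then deploy Propositions~\ref{prop:sigma_surjectivity} and \ref{prop:quotient_injectivity} afterwards; the $\bpn^*$- versus $\bpn[j]^*$-module triviality distinction and the reduced-cohomology conventions also require minor care but present no real difficulty.
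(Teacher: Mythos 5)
Your treatment of the odd-degree part (Propositions \ref{prop:noeth} and \ref{prop:bounded_torsion_trivial} via Remark \ref{rem:Q_0_hypotheses}) and of the final statement (via Corollary \ref{cor:surjectivity}) matches the paper, but there is a genuine gap at the centre of the argument: the identification of $\vtor[j]$ in \emph{even} degrees. You assert that $\vtor[j]^{\mathrm{even}}=0$ by applying Proposition \ref{prop:v_n-torsion}(1) ``with the hypothesis $\bpn[j-1]^{\mathrm{odd}}(X)=0$, which is now available inductively''. That hypothesis is not available for $j\leq n$: your own first paragraph identifies $\bpn[j-1]^{\mathrm{odd}}(X)$ with $\mathrm{Im}(Q_0\ldots Q_{j-1})^{\mathrm{odd}}$, which is nonzero in general; only the top case has vanishing odd cohomology, by hypothesis (1). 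Worse, the conclusion $\vtor[j]=\vtor[j]^{\mathrm{odd}}$ contradicts the very statement being proved, which asserts $\vtor[j]\cong\mathrm{Im}(Q_0\ldots Q_j)\subset H\field_p^*(X)$ --- a subobject with a nontrivial even part in the intended applications (e.g.\ $X=BV$ with $\mathrm{rank}(V)>j+1$, where genuine even-degree $v_j$-torsion occurs).

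The missing step is exactly where the paper does its real work. One examines $\mathrm{Ker}(v_j)$ in degree $2t+|v_j|$ via the universal-coefficient sequence (\ref{eqn:ses_fund}), which presents it as a quotient of $\bpn[j-1]^{2t-1}(X)\cong\mathrm{Im}(Q_0\ldots Q_{j-1})^{2t-1}$ by the image of $\bpn[j]^{2t-1}(X)\cong\mathrm{Im}(Q_0\ldots Q_j)^{2t-1}$; the hypothesis $\hq(X,j)^{2t-1}=0$, applied to the subquotient $\big(\mathrm{Ker}(Q_j)\cap\mathrm{Im}(Q_0\ldots Q_{j-1})\big)/\mathrm{Im}(Q_0\ldots Q_j)$, then shows that $\mathrm{Ker}(v_j)^{2t+|v_j|}$ maps isomorphically onto $\mathrm{Im}(Q_0\ldots Q_j)^{2t+|v_j|}$ in $H\field_p^*(X)$. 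In particular $\mathrm{Ker}(v_j)^{2*}$ injects into $\bpn[j]^{2*}(X)/v_j$, so Lemma \ref{lem:ker-tor} upgrades $\mathrm{Ker}(v_j)^{2*}$ to all of $\vtor[j]^{2*}$. Without this, your later steps are also undermined: the surjectivity of $\kappa_j$ (hence of $\sigma_j$, whether via Proposition \ref{prop:sigma_surjectivity} or Lemma \ref{lem:equivalent_surjectivity}) and the triviality of $\vtor[j]$ as a $\bpn[j]^*$-module both require the even part of the torsion to be identified, not declared zero.
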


\begin{proof}
 Under the given hypotheses, by Remark \ref{rem:Q_0_hypotheses}, Propositions
\ref{prop:bounded_torsion_trivial} and \ref{prop:noeth} together 
 apply to determine $\bpn[j]^\mathrm{odd} (X)$ for $0 \leq j \leq  n$. 

Consider $\mathrm{Ker} (v_j)$ in  degree $2t + |v_j|$, for $j \geq 1$; this
fits into a commutative diagram:
\[
 \xymatrix{
\bpn[j]^{2t-1} (X) 
\ar@{^(->}[r] 
&
\bpn[j-1]^{2t-1}(X)
\ar@{->>}[r]
\ar@{^(->}[d]
\ar@{.>}[rd]_{\alpha}
&
\mathrm{Ker}(v_j)^{2t+|v_j|}
\ar@{^(->}[rd]
\ar[d]^\nu
\\
&
H\field_p^{2t-1} (X)
\ar[r]_{\pm Q_j} 
&
H\field_p^{2t+|v_j|}(X)
&
\bpn[j]^{2t+|v_j|}(X),
\ar[l]^{\rho^j_{-1}}
}
\]
where the top row is the short exact sequence (\ref{eqn:ses_fund}) and the
commutative square is provided by 
Lemma \ref{lem:Q_compatible}. Here, by the odd degree case, the morphism 
$\bpn[j]^{2t-1} (X) \rightarrow
\bpn[j-1]^{2t-1} (X)$ identifies as the  monomorphism   
$$\mathrm{Im} (Q_0 \ldots Q_j) ^{2t-1} \hookrightarrow \mathrm{Im}(Q_0 \ldots
Q_{j-1})^{2t-1}.$$

The morphism $\alpha$ indicated by the dotted arrow factors as 
\[
 \mathrm{Im} (Q_0 \ldots Q_{j-1}) ^{2t-1} 
\stackrel{\pm Q_j}{\twoheadrightarrow}
 \mathrm{Im} (Q_0 \ldots Q_{j}) ^{2t+|v_j|} 
\subset H\field_p^{2t+|v_j|} (X), 
\]
hence has kernel $\big(\mathrm{Ker} (Q_j) \cap \mathrm{Im}(Q_0\ldots Q_{j-1})
\big) ^{2t-1}$, which contains 
$\mathrm{Im} (Q_0 \ldots Q_j ) ^{2t-1}$. 
 The quotient $\big(\mathrm{Ker} (Q_j) \cap \mathrm{Im}(Q_0\ldots Q_{j-1})
/ \{
\mathrm{Im} (Q_0 \ldots Q_j )
\}
 \big) ^{2t-1}$ embeds in $\hq (X, j)^ \mathrm{2t-1}$ hence is trivial, by
hypothesis. Thus the kernel of $\alpha$ coincides with the image of
$\bpn[j]^{2t-1} (X)$ in $\bpn[j-1]^{2t-1} (X)$.  It follows that the vertical
morphism $\nu$ induces an isomorphism:
\[
 \mathrm{Ker}(v_j)^{2t+|v_j|} 
\stackrel{\cong}{\rightarrow}
 \mathrm{Im} (Q_0 \ldots Q_{j}) ^{2t+|v_j|} 
\subset H\field_p^{2t+|v_j|} (X).
\]
In particular,  the composite $\mathrm{Ker} (v_j)^{2t+|v_{j}|} \subset
\bpn[j]^{2t
+|v_{j}|} (X) 
\rightarrow H\field_p ^{2t+|v_{j}|}(X)$ is a monomorphism. Hence, by Lemma
\ref{lem:ker-tor}, in even degrees
\[
 \mathrm{Ker}(v_{j}) ^{2*} = \vtor[j]^{2*}\cong \mathrm{Im} (Q_0 \ldots Q_{j})
^{2*}.
\]
This completes the proof of the main statement.

If $j \geq 0$, since $\vtor[j]$ maps injectively to $H\field_p^* (X)$ by
$\rho^{j}_{-1}$, which factorizes as $\rho^{j-1}_{-1}
\rho^{j}_{j-1}$, it is clear that $\vtor[j] \hookrightarrow \vtor[j-1]$ is
injective and is as stated. Moreover,  the morphism $\kappa_j : \vtor[j]^*
\rightarrow
\mathrm{Ker} (v_{j+1}) ^{*+|Q_{j+1}|} $ is the surjection 
\[
 \mathrm{Im} (Q_0 \ldots Q_j) ^* 
\twoheadrightarrow 
\mathrm{Im} (Q_0 \ldots Q_{j+1})^{*+|Q_{j+1}|} 
\]
induced by $\pm Q_{j+1}$. Thus, by Lemma \ref{lem:equivalent_surjectivity}, the
morphism $\sigma_{j}$ is surjective.

This allows Proposition \ref{prop:quotient_injectivity} to be applied for $0
\leq j\leq n$ to deduce, by increasing  induction on $j$,
that the reduction morphism  $\rho^j_{-1}$ induces a monomorphism
\[
 \field_p \otimes _{\bpn[j]^*} \bpn[j]^* (X)
\hookrightarrow 
H\field_p^* (X).
\]

Finally, under the additional hypothesis of surjectivity to $\hq^*(X, j)$, the
image is identified by Corollary \ref{cor:surjectivity}.
\end{proof}

\begin{cor}
\label{cor:surjectivity_sigma_tilde}
 Under the hypotheses of Theorem \ref{thm:general}, if, in addition, the
reduction morphism 
\[
 \rho_n : BP^*(X) \twoheadrightarrow 
\bpn^* (X)
\]
is surjective, then, for each $0\leq j \leq n$, the morphisms $\rho_j :
BP\rightarrow \bpn[j]$ and $
q_j \ldots q_0 : H\field _p \rightarrow \Sigma^{\sum|Q_i|} \bpn[j]$ induce a
surjection
\[
 BP^* (X) \oplus H\field_p^{*-\sum|Q_i|}(X) \twoheadrightarrow 
\bpn[j]^* (X) .
\]
\end{cor}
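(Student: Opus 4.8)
The plan is to deduce this from the structural results already assembled in Theorem \ref{thm:general} together with the surjectivity statements about the maps $\sigma_j$ and $\tilde\sigma_j$. First I would observe that under the extra hypothesis that $\rho_n : BP^* (X) \twoheadrightarrow \bpn^* (X)$ is surjective, the first point of the hypotheses of Proposition \ref{prop:BP_surjectivity} is satisfied; the conclusion of Theorem \ref{thm:general} supplies, for $0 \leq j \leq n$, both that $\rho^j_{j-1}$ sends $\vtor[j]$ into $\vtor[j-1]$ (with the stated identification) and that $\sigma_j : \bpn[j+1]^* (X) \oplus \vtor[j] \twoheadrightarrow \bpn[j]^* (X)$ is surjective. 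These are precisely hypotheses (2a) and (2b) of Proposition \ref{prop:BP_surjectivity}, so that result applies and yields, for each $0 \leq j \leq n$, that $\tilde\sigma_j : BP^* (X) \oplus \vtor[j] \twoheadrightarrow \bpn[j]^* (X)$ is surjective.

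Next I would unwind what $\tilde\sigma_j$ surjective means in terms of the two maps named in the statement. By definition $\tilde\sigma_j$ is the sum of $\rho_j : BP^* (X) \to \bpn[j]^* (X)$ and the inclusion $\vtor[j] \hookrightarrow \bpn[j]^* (X)$. So it remains to realise $\vtor[j]$ as a quotient of $H\field_p^{*-\sum|Q_i|}(X)$ via the composite $q_j \ldots q_0$. For this I would invoke the identification from the main statement of Theorem \ref{thm:general}: $\vtor[j] \cong \mathrm{Im}(q_j \ldots q_0) \subset \bpn[j]^* (X)$. In other words the map on cohomology induced by $q_j \ldots q_0 : H\field_p \to \Sigma^{\sum|Q_i|}\bpn[j]$ has image exactly $\vtor[j]$. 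Composing the surjection $H\field_p^{*-\sum|Q_i|}(X) \twoheadrightarrow \vtor[j]$ with the inclusion $\vtor[j] \hookrightarrow \bpn[j]^* (X)$ gives the second summand of our desired map, and adding it to $\rho_j$ on the $BP^* (X)$ summand recovers a map whose image contains $\mathrm{Im}(\rho_j) + \vtor[j]$, which is all of $\bpn[j]^* (X)$ by the surjectivity of $\tilde\sigma_j$.

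Concretely the proof is then: fix $0 \leq j \leq n$; apply Proposition \ref{prop:BP_surjectivity} (its first hypothesis from the surjectivity of $\rho_n$, its second from Theorem \ref{thm:general}) to get $\tilde\sigma_j$ surjective; then note that the map $BP^* (X) \oplus H\field_p^{*-\sum|Q_i|}(X) \to \bpn[j]^* (X)$ built from $\rho_j$ and $q_j \ldots q_0$ factors $\tilde\sigma_j$ through it, since the $H\field_p$-summand surjects onto $\vtor[j]$ by the identification $\vtor[j] \cong \mathrm{Im}(q_j\ldots q_0)$ in Theorem \ref{thm:general}; surjectivity of the composite follows from surjectivity of $\tilde\sigma_j$. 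I expect the only genuine point requiring care — the "main obstacle", though it is mild here — is checking that Proposition \ref{prop:BP_surjectivity} applies uniformly for all $j \leq n$, i.e.\ that the chain of hypotheses (2a), (2b) delivered by Theorem \ref{thm:general} is indexed correctly and that the downward induction in Proposition \ref{prop:BP_surjectivity} starts from $j = n$ with the global surjectivity of $\rho_n$; once the bookkeeping is set up the rest is formal.
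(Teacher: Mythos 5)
Your proposal is correct and follows essentially the same route as the paper, which proves the corollary precisely by combining Proposition \ref{prop:BP_surjectivity} (its first hypothesis supplied by the surjectivity of $\rho_n$, its second by the ``Moreover'' conclusions of Theorem \ref{thm:general}) with the identification $\vtor[j]\cong\mathrm{Im}(q_j\ldots q_0)$ to rewrite the surjection $\tilde\sigma_j$ in the stated form. The indexing point you flag at the end is real but harmless, exactly as you say.
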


\begin{proof}
The result follows by combining the conclusions of Theorem  \ref{thm:general}
with Proposition \ref{prop:BP_surjectivity}.
\end{proof}

This Corollary can be strengthened under an additional hypothesis. The statement
of the following result uses the conclusions of 
Proposition \ref{prop:BP_surjectivity}, Theorem \ref{thm:general} and the
notation introduced in Proposition \ref{prop:BP_quotient_injectivity}.

\begin{prop}
 \label{prop:BP-base-change}
Under the hypotheses of Theorem \ref{thm:general} if, in addition, the reduction
morphism $\rho_n$ induces an isomorphism
\[
\bpn^* \otimes_{BP^*}  BP^*(X) \stackrel{\cong}{\rightarrow} 
\bpn^* (X)
\]
then, for $0 \leq j \leq n$, the morphism $\rho_j$ induces a monomorphism 
\[
\bpn[j]^* \otimes_{BP^*}  BP^*(X) \hookrightarrow 
\bpn[j]^* (X)
\]
and, if $L_j$ denotes the kernel of the surjection 
\[
 (\bpn[j]^* \otimes_{BP^*} BP^* (X)) 
\oplus 
\vtor[j] 
\twoheadrightarrow 
\bpn[j]^* (X), 
\]
then $L_n= \vtor$ and, for $n \geq j\geq 0$, the inclusion $\vtor[j]
\hookrightarrow \vtor[j-1]$ induces a short
exact sequence
\[
 0\rightarrow L_j \rightarrow L_{j-1} \rightarrow \big( \mathrm{Ker}(Q_j) \cap
\mathrm{Im} (Q_0 \ldots Q_{j-1})\big)/ \mathrm{Im} (Q_0 \ldots  Q_j )
\rightarrow 0.
\]
\end{prop}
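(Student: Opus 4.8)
The plan is to run a downward induction on $j$, starting from $j = n$, using the cofibre sequence (\ref{eqn:cofib_BPn}) applied after base change along $BP^* \to BP^*(X)$. First I would establish the base case $L_n = \vtor$. By hypothesis $\bpn^* \otimes_{BP^*} BP^*(X) \xrightarrow{\cong} \bpn^* (X)$, so the surjection $(\bpn^* \otimes_{BP^*} BP^*(X)) \oplus \vtor \twoheadrightarrow \bpn^*(X)$ is just the sum of an isomorphism and the inclusion $\vtor \hookrightarrow \bpn^*(X)$; its kernel is therefore the ``anti-diagonal'' copy of $\vtor$, which gives $L_n = \vtor$ (using here that $\vtor$ really is the full $v_n$-torsion, as guaranteed by Theorem \ref{thm:general}, so that $\tilde\sigma_n$ makes sense in the form of Proposition \ref{prop:BP_quotient_injectivity}).

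For the inductive step, suppose the monomorphism $\bpn[j]^* \otimes_{BP^*} BP^*(X) \hookrightarrow \bpn[j]^*(X)$ is known (for $j = n$ this is the isomorphism; for smaller $j$ it is supplied by Proposition \ref{prop:BP_quotient_injectivity}, whose hypotheses are met by Theorem \ref{thm:general} together with the base-change isomorphism propagating downward). Apply the right exact functor $\bpn[j-1]^* \otimes_{\bpn[j]^*} - \cong \zed_{(p)} \otimes_{\zed_{(p)}[v_j]} -$ to the defining short exact sequence $0 \to L_j \to (\bpn[j]^* \otimes_{BP^*} BP^*(X)) \oplus \vtor[j] \to \bpn[j]^*(X) \to 0$. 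As in the proof of Proposition \ref{prop:BP_quotient_injectivity}, because $v_j$ acts trivially on $\vtor[j]$, the term $\vtor[j]$ survives unchanged and the connecting map forces a Tor-term; the resulting long exact Tor-sequence, combined with the cofibre sequence (\ref{eqn:cofib_BPn}) for $j$, should identify the kernel of $(\bpn[j-1]^* \otimes_{BP^*} BP^*(X)) \oplus \vtor[j-1] \twoheadrightarrow \bpn[j-1]^*(X)$ — i.e. $L_{j-1}$ — as an extension of the $v_j$-Tor contribution of $\bpn[j]^*(X)$ by $L_j$. Concretely, the inclusion $\vtor[j] \hookrightarrow \vtor[j-1]$ from Theorem \ref{thm:general} embeds $L_j$ into $L_{j-1}$, and the cokernel is $\mathrm{Ker}(v_j)$-related data.

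The key identification of the cokernel as $\big(\mathrm{Ker}(Q_j) \cap \mathrm{Im}(Q_0 \ldots Q_{j-1})\big)/\mathrm{Im}(Q_0 \ldots Q_j)$ is where I would reuse the diagram chase already carried out in the proof of Theorem \ref{thm:general}: there the map $\alpha$ was shown to have kernel exactly $\mathrm{Ker}(Q_j) \cap \mathrm{Im}(Q_0 \ldots Q_{j-1})$ in each odd degree, with $\mathrm{Im}(Q_0 \ldots Q_j)$ sitting inside it as the image of $\bpn[j]^*(X)$; the quotient is precisely the cokernel of the reduction $\bpn[j]^{\mathrm{odd}}(X) \to \bpn[j-1]^{\mathrm{odd}}(X)$, which under the base-change identification becomes the discrepancy between $L_{j-1}$ and $L_j$. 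So the plan is to match the cokernel of $L_j \hookrightarrow L_{j-1}$ with the cokernel of $\mathrm{Im}(Q_0 \ldots Q_j) \hookrightarrow \mathrm{Ker}(Q_j) \cap \mathrm{Im}(Q_0 \ldots Q_{j-1})$ via the explicit descriptions of $\bpn[j]^*(X)$ in odd degrees and of $\vtor[j]$ in even degrees furnished by Theorem \ref{thm:general}.

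The main obstacle I anticipate is bookkeeping the degree shift and the interplay between odd and even degrees: $L_{j-1}$ receives contributions from $\vtor[j-1]$ (concentrated, by Proposition \ref{prop:v_n-torsion} and Theorem \ref{thm:general}, essentially in even degrees via $\mathrm{Im}(Q_0\ldots Q_{j-1})$) and from the failure of the base-changed $BP$-part to surject, and one must check that the cokernel term lands in the right degrees — namely that the Milnor-operation quotient $\big(\mathrm{Ker}(Q_j)\cap\mathrm{Im}(Q_0\ldots Q_{j-1})\big)/\mathrm{Im}(Q_0\ldots Q_j)$, which a priori lives in odd degrees (being a sub quotient of $\bpn[j-1]^{\mathrm{odd}}(X)$), is exactly what appears. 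Verifying that no even-degree terms contaminate the sequence, and that the extension is the one induced by $\vtor[j]\hookrightarrow\vtor[j-1]$ rather than some twist, will require care but should follow from the explicit isomorphisms already in hand.
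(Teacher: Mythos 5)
Your overall architecture --- downward induction from $L_n=\vtor$, applying $\bpn[j-1]^*\otimes_{\bpn[j]^*}-$ to the defining short exact sequence for $L_j$ and comparing with the universal coefficient sequence (\ref{eqn:ses_fund}) --- is the paper's route, and the base case together with the construction of the inclusion $L_j\hookrightarrow L_{j-1}$ (via the identity on the $BP$-factor and $\iota_j$ on the torsion factor) is fine. The gap is in your identification of the cokernel $L_{j-1}/L_j$, where the degree bookkeeping is turned the wrong way round. The quotient $\big(\mathrm{Ker}(Q_j)\cap\mathrm{Im}(Q_0\ldots Q_{j-1})\big)/\mathrm{Im}(Q_0\ldots Q_j)$ does not ``a priori live in odd degrees'': it embeds in $\hq^*(X,j)$, which vanishes in odd degrees by hypothesis, so it is concentrated in \emph{even} degrees --- as is $L_j$ itself, since in odd degrees $\bpn[j]^*\otimes_{BP^*}BP^*(X)$ vanishes and the surjection reduces to the identity of $\vtor[j]^{\mathrm{odd}}=\bpn[j]^{\mathrm{odd}}(X)$. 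Consequently, reusing the map $\alpha$ from the proof of Theorem \ref{thm:general} cannot supply the identification: that diagram chase took place in odd degrees, precisely where the quotient you want to exhibit is zero. Nor is $L_{j-1}/L_j$ ``the cokernel of $\bpn[j]^{\mathrm{odd}}(X)\to\bpn[j-1]^{\mathrm{odd}}(X)$'' --- that cokernel is $\mathrm{Ker}(v_j)$, i.e.\ $\mathrm{Im}(Q_0\ldots Q_j)$ in shifted (even) degree, not the stated subquotient.

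What actually closes the induction is the bottom row of the resulting $3\times 3$ diagram: since $\bpn[j-1]^*\otimes_{BP^*}BP^*(X)$ appears identically in both middle terms, the snake lemma yields a short exact sequence
\[
0\rightarrow L_{j-1}/L_j \rightarrow \vtor[j-1]/\vtor[j] \rightarrow \mathrm{Ker}(v_j)^{*+|Q_j|}\rightarrow 0 .
\]
Theorem \ref{thm:general} identifies $\vtor[j-1]/\vtor[j]$ with $\mathrm{Im}(Q_0\ldots Q_{j-1})/\mathrm{Im}(Q_0\ldots Q_j)$ and $\mathrm{Ker}(v_j)^{*+|Q_j|}$ with $\mathrm{Im}(Q_0\ldots Q_j)$ in shifted degree, the surjection being induced by $\pm Q_j$; its kernel is exactly $\big(\mathrm{Ker}(Q_j)\cap\mathrm{Im}(Q_0\ldots Q_{j-1})\big)/\mathrm{Im}(Q_0\ldots Q_j)$, all concentrated in even degrees. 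With that correction your argument goes through and coincides with the paper's.
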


\begin{proof}
The injectivity of $\bpn[j]^* \otimes_{BP^*}  BP^*(X) \hookrightarrow \bpn[j]^*
(X)$ follows by applying Proposition \ref{prop:BP_quotient_injectivity}, using
the conclusions of Theorem \ref{thm:general}.  
The proof  of the remaining statements extends the methods of the proof of
Proposition \ref{prop:BP_quotient_injectivity}, using the fact that the
reduction $\rho^j_{j-1}$ induces 
a monomorphism $\iota_j : \vtor[j] \hookrightarrow \vtor[j-1]$. 

Since $\bpn[j]^* \otimes_{BP^*}  BP^*(X)$ is concentrated in even degrees and
$\vtor[j]^{\mathrm{odd}}$ coincides with $\bpn[j]^
{\mathrm{odd}} (X)$, $L_j$ is concentrated in even degrees. Moreover, since
$L_j$ injects to
$\vtor[j]$, $L_j$ is a trivial $\bpn[j]^*$-module. 

It is straightforward to show  that $L_n=\vtor$. Then, for $n \geq j >0$, the
diagram (\ref{eqn:Ln_ses}) of the proof of Proposition
\ref{prop:BP_quotient_injectivity} extends to a commutative diagram in which the
rows and columns are short exact sequences:
\[
 \xymatrix @C=1pc
{
L_j
\ar[r]
\ar[d]
&
\bpn[j-1]^* \hspace{-3pt} \otimes _{BP^*} \hspace{-3pt}BP^* (X)\oplus \vtor[j] 
\ar[r]
\ar[d]_{1\oplus \iota_j}
&
\bpn[j-1]^* \hspace{-3pt}\otimes _{\bpn[j]^*} \hspace{-3pt}\bpn[j]^* (X)
\ar@{^(->}[d]
\\
L_{j-1}
\ar[d]
\ar[r]
&
\bpn[j-1]^*\hspace{-3pt}\otimes _{BP^*}\hspace{-3pt} BP^* (X) \oplus \vtor[j-1] 
\ar[r]
\ar[d]
&
\bpn[j-1]^* (X)
\ar[d]\\
L_{j-1}/L_{j}
\ar[r]
&
\vtor[j-1] / \vtor[j]
\ar[r]
&
\mathrm{Ker}(v_j)^{*+|Q_j|}
}
\]
and the right hand column is the universal coefficient short exact sequence. 
This diagram
provides the natural inclusion of $L_j$ to $L_{j-1}$. (As
remarked above, the case of interest is where the degree of the middle column is
even; the odd degree case has 
already been used in the proof of Theorem \ref{thm:general}.)

Theorem \ref{thm:general} identifies the quotient $\vtor[j-1]/\vtor[j]$ as
$\mathrm{Im} ( Q_0 \ldots Q_{j-1} ) / \mathrm{Im} (Q_{0} \ldots Q_j)$ and
$\mathrm{Ker}(v_j)^{*+|Q_j|}$ as $\mathrm{Im} (Q_0 \ldots Q_j)$, in
appropriately shifted degree; the surjection is induced by the Milnor derivation
$Q_j$. The identification of the subquotient $L_{j-1}/L_j$  follows.
\end{proof}

\section{The case of elementary abelian $p$-groups}
\label{sect:eltab}

\subsection{Generalized Margolis vanishing}

The structure of $H\field_p^*( BV_+)$ is well-known; by the Künneth theorem, it 
suffices to describe the rank one case. For
$p$ odd, $H\field_p^* (B\zed/p_+) \cong \Lambda (u) \otimes \field_p [v]$,
with $|u|=1$ and $|v|=2$ with Bockstein $\beta u = v$; for $p=2$,  $H\field_2^*
(B\zed/2_+) \cong  \field_2 [u]$, where $|u|=1$. The action of the Milnor
primitives is determined as follows: for $p$ odd  $Q_i$ acts trivially on $v$ 
and $Q_i u = v^{p^i}$;  for $p=2$, $Q_i u =
u^{2^{i+1}}$.

For $p$ odd and $V$ an elementary abelian $p$-group of finite rank, the above
gives the  
isomorphism 
\[
 H\field_p^* (B V_+) \cong \Lambda^* (V^\sharp) \otimes S^*(V^\sharp),
\]
where $V^\sharp$ denotes the linear dual, $\Lambda^*$ the exterior algebra
 and $S^*$ the symmetric algebra.  This provides a bigrading which is
related to 
the standard grading by $H\field_p^n (B V_+) \cong \bigoplus _{a+2b=n} \Lambda^a
(V^\sharp) \otimes S^b(V^\sharp)$. The Milnor primitives respect the
decomposition, in the sense that 
\begin{eqnarray}\label{eqn:Milnor_bigrading}
Q_i : \Lambda ^a (V^\sharp) \otimes S^b (V^\sharp) \rightarrow  \Lambda ^{a-1}
(V^\sharp) \otimes S^{b+p^i} (V^\sharp).
\end{eqnarray}

This is a Koszul-complex type differential.

\begin{rem}
\label{rem:filter_F[u]}
Similar statements are obtained for $p=2$ by filtering, based 
on the isomorphism of $\field_2 [u^2]$-modules: $\field_2 [u] \cong \Lambda (u)
\otimes \field_2 [u^2]$.
\end{rem}

The map  $B \zed/p \rightarrow \mathbb{C}P^\infty$ induced by the inclusion 
$\zed/p \subset S^1$ of $p$th roots of unity, induces a morphism of
unstable algebras $H\field_p^*
(\mathbb{C}P^\infty_{\ +}) \cong \field_p [x] \hookrightarrow
H\field_p^*(B\zed/p_+)$, with $|x|=2$, determined  by $x \mapsto v$
(respectively $x \mapsto u^2$ for $p=2$). Since $ H\field_p^* 
((\mathbb{C}P^\infty) ^{\times d}_{\ +}) $ is concentrated in even degrees, 
as observed in Remark \ref{rem:free_trivial_hq}, it can be identified with $\hq 
((\mathbb{C}P^\infty) ^{\times d}_{\ +})$.

\begin{thm}
\label{thm:multikoszul}
 Let $V$ be an elementary abelian $p$-group of rank $d$ and $n \in \nat$. Then
the map $B \zed/p \rightarrow \mathbb{C}P^\infty$ induces a surjection 
 \[
  H\field_p^* ((\mathbb{C}P^\infty) ^{\times d}_{\ +}) 
  = 
  \hq ((\mathbb{C}P^\infty) ^{\times d}_{\ +})
  \twoheadrightarrow 
  \hq ^*(BV_+ , n).
 \]
 In particular $\hq ^* (BV_+, n)^{\mathrm{odd}} =0$ and the Thom reduction $BP
\rightarrow H\field_p$ induces a surjection 
 \[
  BP^* (BV_+) 
  \twoheadrightarrow  \hq^* (BV_+ , n).
 \]
\end{thm}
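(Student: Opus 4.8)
The plan is to reduce everything to the rank-one case and a Koszul-complex computation, then push the surjection from $\mathbb{C}P^\infty$ along the map $B\zed/p \to \mathbb{C}P^\infty$. First I would treat $p$ odd and set $W = H\field_p^*((\mathbb{C}P^\infty)^{\times d}_{\ +}) \cong S^*(V^\sharp)$, viewed inside $H\field_p^*(BV_+) \cong \Lambda^*(V^\sharp) \otimes S^*(V^\sharp)$ as the bottom exterior degree $\Lambda^0 \otimes S^*$. Since the $Q_i$ are derivations lowering exterior degree by one (see (\ref{eqn:Milnor_bigrading})), $W$ consists of permanent cocycles for every $Q_i$, so $W \subseteq \bigcap_{i=0}^n \mathrm{Ker}(Q_i)$ and the composite $W \to \hq^*(BV_+,n)$ is defined. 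The content is that this composite is \emph{surjective}, i.e.\ that $\bigcap_{i=0}^n \mathrm{Ker}(Q_i) = W + \mathrm{Im}(Q_0\cdots Q_n)$.

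The key step is therefore a multi-Koszul statement: for the $\Lambda(Q_0,\dots,Q_n)$-module $M := H\field_p^*(BV_+)$, equipped with the above bigrading, one has $\mathrm{soc}(M) = \bigcap \mathrm{Ker}(Q_i)$ equal to $\mathrm{Im}(Q_0\cdots Q_n) \oplus (\text{something landing in } \Lambda^0\otimes S^*)$. This is exactly the Margolis-type assertion alluded to in Remark \ref{rem:free_trivial_hq} and flagged in the introduction as Proposition \ref{prop:multi-koszul}: one decomposes $M \cong F \oplus \overline M$ with $F$ free over $\Lambda(Q_0,\dots,Q_n)$ and $\overline M$ containing no free submodule, and shows $\overline M$ is exactly the subalgebra generated by $S^*(V^\sharp)$ in exterior degree $0$ (on which all $Q_i$ vanish identically), while the exterior part $\Lambda^{\geq 1}$ assembles into a free complement. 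For $d=1$ this is the elementary computation that $\Lambda(u)\otimes\field_p[v]$ with $Q_i u = v^{p^i}$ splits as $\field_p[v]$ (the $\overline M$ part) plus a free $\Lambda(Q_0,\dots,Q_n)$-module; the general rank follows by the Künneth theorem, since a tensor product of modules each of the form (free) $\oplus$ (trivial concentrated in $\Lambda^0$) again has that shape, and $\mathrm{soc}$ and $\mathrm{Im}(Q_0\cdots Q_n)$ behave well under tensor products over $\field_p$. Granting this, $\hq^*(BV_+,n) \cong \mathrm{soc}(\overline M) = W / (W \cap \mathrm{Im}(Q_0\cdots Q_n))$ is a quotient of $W$, which is the asserted surjection; and since $W$ is concentrated in even degrees, $\hq^*(BV_+,n)^{\mathrm{odd}} = 0$ is immediate.

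For $p=2$ I would use Remark \ref{rem:filter_F[u]}: filter $\field_2[u] \cong \Lambda(u)\otimes\field_2[u^2]$ and run the same argument on the associated graded, then check the filtration is respected by the $Q_i$ and by $\mathrm{Im}(Q_0\cdots Q_n)$ well enough to recover the surjection onto $\hq^*(BV_+,n)$; the map $B\zed/2 \to \mathbb{C}P^\infty$ still realizes the inclusion $\field_2[u^2] \hookrightarrow \field_2[u]$ on cohomology, matching $W = H\field_2^*((\mathbb{C}P^\infty)^{\times d}_{\ +})$ with the relevant subalgebra. Finally, for the last sentence: $H\field_p^*((\mathbb{C}P^\infty)^{\times d}_{\ +})$ is concentrated in even degrees and $(\mathbb{C}P^\infty)^{\times d}$ has $BP^*$ concentrated in even degrees and generated over $BP^*$ by Chern classes, so the Thom reduction $BP^*((\mathbb{C}P^\infty)^{\times d}_{\ +}) \twoheadrightarrow H\field_p^*((\mathbb{C}P^\infty)^{\times d}_{\ +})$ is surjective (classical); composing with the naturality square for $BV \to (\mathbb{C}P^\infty)^{\times d}$ and the just-established surjection $H\field_p^*((\mathbb{C}P^\infty)^{\times d}_{\ +}) \twoheadrightarrow \hq^*(BV_+,n)$ gives the surjection $BP^*(BV_+) \twoheadrightarrow \hq^*(BV_+,n)$, using that this composite factors through $BP^*(BV_+)$ via the naturality of $\rho^n_{-1}$ and Proposition \ref{prop:indeterminacy}.

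The main obstacle is the multi-Koszul splitting in arbitrary rank: controlling the free part of $H\field_p^*(BV_+)$ over the \emph{whole} exterior algebra $\Lambda(Q_0,\dots,Q_n)$ simultaneously (not just one $Q_i$ at a time, where Margolis's criterion is standard) is exactly the delicate point, and is presumably why the paper isolates it as Proposition \ref{prop:multi-koszul}/Theorem \ref{thm:multikoszul}; the bigrading by exterior and symmetric degree is the tool that makes the simultaneous statement tractable, by turning each $Q_i$ into a Koszul differential with an explicit, compatible structure.
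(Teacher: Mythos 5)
Your overall architecture matches the paper's: reduce to the statement that $\bigcap_{i=0}^n\mathrm{Ker}(Q_i) = S^*(V^\sharp) + \mathrm{Im}(Q_0\cdots Q_n)$ inside $\Lambda^*(V^\sharp)\otimes S^*(V^\sharp)$, handle $p=2$ by the filtration of Remark \ref{rem:filter_F[u]}, and deduce the $BP$-statement from surjectivity of the Thom reduction for $(\mathbb{C}P^\infty)^{\times d}$ together with Landweber's surjection $BP^*((\mathbb{C}P^\infty)^{\times d}_{\ +})\twoheadrightarrow BP^*(BV_+)$. However, your argument for the central multi-Koszul step is wrong. For $n\geq 1$ the rank-one module $\Lambda(u)\otimes\field_p[v]$ is \emph{not} of the form $(\text{free over }\Lambda(Q_0,\dots,Q_n))\oplus(\text{trivial})$: the composite $Q_0\cdots Q_n$ vanishes identically on it (already $Q_{n-1}Q_n$ kills both $u$ and $\field_p[v]$), so it contains no free $\Lambda(Q_0,\dots,Q_n)$-submodule, yet $Q_iu=v^{p^i}\neq 0$, so it is not trivial either. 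The premise of your K\"unneth step therefore fails, and the step cannot be repaired by formal means: socles and maximal free summands do not commute with tensor products of non-free modules over the Hopf algebra $\Lambda(Q_0,\dots,Q_n)$ --- indeed the essential phenomenon is that free summands of $H\field_p^*(BV_+)$ first appear when $\mathrm{rank}(V)\geq n+1$, which is invisible in the rank-one factors. (Even where a free-plus-trivial splitting does exist, e.g.\ $n=0$, $d=1$, the trivial complement is $\field_p\{1\}$ rather than $\field_p[v]$: no nonzero free submodule meets $\field_p[v]$ trivially, since $Q_0$ of any generator involving $u$ lands in $\field_p[v]$.)

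What must replace the K\"unneth argument is precisely Proposition \ref{prop:multi-koszul}: for $a>0$, any $x\in\Lambda^a(W)\otimes S^b(W)$ killed by $Q_i$ for all $i\in\cali$ lies in the image of $\prod_{i\in\cali}Q_i$. The paper proves this by a double induction, on $|\cali|$ and on the degree of $x$, whose engine is the acyclicity of each single Koszul complex (Lemma \ref{lem:Koszul_acyclic}) together with the correction scheme producing the elements $\alpha_k$; this is the genuinely non-formal content. You correctly identified this as ``the delicate point'', but the specific mechanism you propose for it does not work, so the proof is incomplete at its crux.
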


\begin{proof}
 The case $p$ odd is treated below; the argument adapts to the case $p=2$ by
filtering  using the number of terms of odd degree in monomials (cf. Remark
\ref{rem:filter_F[u]}).
 
 Since $H\field_p^* (\mathbb{C}P^\infty_+)$ is concentrated in even degrees,
the Milnor operations act trivially, hence the Thom reduction maps to 
 $\bigcap_{i=0}^{\infty} \mathrm{Ker} (Q_i)$ and the morphism to $\hq^* (BV_+,
n)$ 
is defined. The first statement is proved using a refinement of Margolis' 
criterion for the
freeness of modules over exterior algebras 
 \cite[Theorem 8(a), Section 18.3]{margolis}, exploiting the filtration induced
by the number of exterior generators.

Namely, a straightforward reduction (using the behaviour 
(\ref{eqn:Milnor_bigrading}) of
the Milnor operations with respect to the bigrading)
implies that it is sufficient to show that an element $x \in \Lambda^a(W)
\otimes S^b(W)$ which lies in $\bigcap_{i=0}^{n} \mathrm{Ker} (Q_i)$
 is in the image of 
 \[
  (Q_0 \ldots Q_n) : \Lambda^{a+n+1}(W) \otimes S^{b- \sum |Q_i|}(W)
  \rightarrow 
  \Lambda^a(W) \otimes S^b(W),
 \]
where $W$ is written for $V^\sharp$, for notational simplicity. This is  a case
of Proposition \ref{prop:multi-koszul} below.

The Thom reduction $ BP^* ((\mathbb{C}P^\infty) ^{\times d}_{\
+})  \twoheadrightarrow H\field_p^* ((\mathbb{C}P^\infty) ^{\times d}_{\ +})$ is
surjective; moreover, Landweber showed  that $BP^*
((\mathbb{C}P^\infty) ^{\times d}_{\ +}) \twoheadrightarrow BP^* (BV_+) $ is
surjective (this is included in \cite[Proposition 2.3]{Str}). The final
statement follows. 
 \end{proof}

 The  acyclicity of the Koszul
complex is restated below; it is valid for all primes (with the appropriate 
interpretation of the 
 operation $Q_i$ at $p=2$).
 
 \begin{lem}
 \label{lem:Koszul_acyclic}
 For $i,a, n \in \nat$, the Koszul complex yields an acyclic complex:
 \[
  \ldots 
  \rightarrow 
  \Lambda^n \otimes S^{a- n p^i} 
  \stackrel{Q_i}{\rightarrow}
   \Lambda^{n-1} \otimes S^{a- (n-1)p^i} 
  \stackrel{Q_i}{\rightarrow}
 \ldots 
 \rightarrow 
 S^a 
 \rightarrow 
 \Strunc{a}{i}
 \rightarrow 
 0,
 \]
 where $\Strunc{a}{i}$ is the truncated symmetric power, imposing the relation
$w^{p^i}=0$.
\end{lem}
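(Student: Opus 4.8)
The plan is to recognize the displayed complex, for all internal degrees $a$ at once, as the Koszul complex of the polynomial ring $R := S^*(W) \cong \field_p[w_1, \ldots, w_d]$ (with $w_1, \ldots, w_d$ a basis of $W$, $d = \dim_{\field_p} W$) on the sequence $(w_1^{p^i}, \ldots, w_d^{p^i})$, and then to invoke the classical fact that the Koszul complex on a regular sequence is acyclic.

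First I would make the identification explicit. Writing also $w_1, \ldots, w_d$ for the corresponding exterior generators of $\Lambda^1(W) = W$, the bigraded $R$-module $\bigoplus_n \Lambda^n(W) \otimes S^*(W)$ is the underlying module of $K_\bullet(w_1^{p^i}, \ldots, w_d^{p^i}; R)$, and by $(\ref{eqn:Milnor_bigrading})$ together with the fact that $Q_i$ is a derivation with $Q_i(w_j) = w_j^{p^i}$ on exterior generators and $Q_i(w_j) = 0$ on symmetric generators (for $p$ odd this is the description of the $Q_i$ on $H\field_p^*(BV_+)$ recalled above; for $p = 2$ one uses the reinterpretation of Remark \ref{rem:filter_F[u]}, taking the $u_j^2$ as polynomial generators and the $u_j$ as exterior generators, so that $Q_i(u_j) = u_j^{2^{i+1}} = (u_j^2)^{2^i}$), the operator $Q_i$ is precisely the Koszul differential. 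Assigning an exterior generator the internal degree $p^i$ and a symmetric generator the internal degree $1$, the part of this Koszul complex in internal degree $a$ is exactly the complex in the statement, with $S^a \twoheadrightarrow \Strunc{a}{i}$ the degree-$a$ component of the canonical surjection $R \twoheadrightarrow R/(w_1^{p^i}, \ldots, w_d^{p^i})$; acyclicity for every $a$ is equivalent to acyclicity of the full bigraded Koszul complex.

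Next I would check that $(w_1^{p^i}, \ldots, w_d^{p^i})$ is a regular sequence in $R$. This is immediate: $R$ is free as a module over the subalgebra $R' := \field_p[w_1^{p^i}, \ldots, w_d^{p^i}] \cong \field_p[y_1, \ldots, y_d]$, with basis the monomials $w_1^{c_1} \cdots w_d^{c_d}$, $0 \leq c_j < p^i$; hence a sequence in $R'$ that is regular in $R'$ remains regular after extension to $R$, and $(y_1, \ldots, y_d)$ is regular in the polynomial ring $R'$. (Equivalently, $R$ is Cohen--Macaulay and $R/(w_1^{p^i}, \ldots, w_d^{p^i})$ is finite-dimensional, so the $d$ elements form a system of parameters and thus a regular sequence.) By the standard theorem, $K_\bullet(w_1^{p^i}, \ldots, w_d^{p^i}; R)$ is then acyclic, resolving $R/(w_1^{p^i}, \ldots, w_d^{p^i}) = \overline{S^*}$, the truncated symmetric algebra; restricting to internal degree $a$ yields the asserted exact sequence.

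No genuine obstacle is expected. The only points needing care are the verification that $Q_i$ is literally the Koszul differential (signs are irrelevant for acyclicity, and at $p = 2$ the bigrading must first be set up via Remark \ref{rem:filter_F[u]}) and the matching of the two gradings so that the symmetric degree drops by $p^i$ at each homological step. If one prefers to avoid citing the regular-sequence criterion, the same conclusion follows by induction on $d$ from the Künneth isomorphism $K_\bullet(w_1^{p^i}, \ldots, w_d^{p^i}; R) \cong K_\bullet(w_1^{p^i}, \ldots, w_{d-1}^{p^i}; R) \otimes_R K_\bullet(w_d^{p^i}; R)$ and the observation that $w_d^{p^i}$ is a non-zero-divisor on $R/(w_1^{p^i}, \ldots, w_{d-1}^{p^i})$.
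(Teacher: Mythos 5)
Your proof is correct and is exactly the argument the paper intends: the paper gives no proof, simply observing that the displayed complex \emph{is} the Koszul complex (on the regular sequence of $p^i$-th powers of a basis of $W$, whose acyclicity is standard), which is precisely the identification you carry out in detail. The points you flag for care (matching the bigrading, the $p=2$ reinterpretation via Remark \ref{rem:filter_F[u]}) are handled consistently with the paper's conventions.
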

 
 The following result can be deduced from \cite[Theorem 8(a),
Section 18.3]{margolis}; a direct proof is given here, since this indicates the
very general nature of the result.
  
\begin{prop}
\label{prop:multi-koszul}
Let $W$ be an elementary abelian $p$-group of finite rank, $\emptyset \neq \cali
\subset \nat$ be a non-empty, finite indexing set and $0
< a \in \nat$.  If  $x \in \Lambda^a(W) \otimes S^b(W)$ is an element such that
$Q_i x = 0$, $\forall i \in \cali$, then there exists an element $y \in
\Lambda^{a+ |\cali|} (W) \otimes S^{b - \sum_{i \in \cali} |Q_i|}(W)$ such that 
\[
 x = (\prod_{i \in \cali} Q_i ) y.
\]
In particular, $x=0$ if either $a+ |\cali| > \dim W$ or $b< \sum_{i \in \cali} 
|Q_i|$.
\end{prop}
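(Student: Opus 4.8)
The plan is to argue by induction on $|\cali|$, peeling off one Milnor operation at a time and using the single-variable Koszul acyclicity of Lemma \ref{lem:Koszul_acyclic} at each stage. For the base case $|\cali| = 1$, say $\cali = \{i\}$, the claim is exactly the statement that the complex $(\Lambda^\bullet(W) \otimes S^\bullet(W), Q_i)$ has no homology in the relevant internal bidegree: if $Q_i x = 0$ with $x \in \Lambda^a(W) \otimes S^b(W)$ and $a > 0$, then $x = Q_i y$ for a suitable $y \in \Lambda^{a+1}(W) \otimes S^{b - |Q_i|}(W)$. This follows from Lemma \ref{lem:Koszul_acyclic} applied in each homological degree (with the bigrading bookkeeping as in (\ref{eqn:Milnor_bigrading})); note that the hypothesis $a>0$ is what places us strictly inside the acyclic range rather than at the cokernel term $\Strunc{b}{i}$.

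For the inductive step, suppose the result is known for index sets of size $k-1$ and let $|\cali| = k \geq 2$; fix $i_0 \in \cali$ and set $\cali' = \cali \setminus \{i_0\}$. The first key observation is that the operations $Q_i$ anticommute (they are odd derivations with $Q_i^2 = 0$), so $Q_j \big((\prod_{i \in \cali'} Q_i) x\big) = \pm (\prod_{i \in \cali'} Q_i)(Q_j x) = 0$ for every $j \in \cali'$ as well; more to the point, I want to run the argument so as to produce the claimed $y$. Concretely: given $x$ killed by all $Q_i$, $i \in \cali$, consider first $Q_{i_0} x = 0$; by the base case there is $z \in \Lambda^{a+1}(W) \otimes S^{b-|Q_{i_0}|}(W)$ with $x = Q_{i_0} z$. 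The element $z$ need not be annihilated by the remaining operations, so this alone is not enough — this is the crux of the matter and the step I expect to be the main obstacle. The resolution is Margolis-style: one must correct $z$ by an element in the image of $Q_{i_0}$ (which does not change $Q_{i_0} z = x$) so that the corrected element lies in $\bigcap_{i \in \cali'} \mathrm{Ker}(Q_i)$; then the inductive hypothesis applied to the corrected element finishes the proof.

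To carry out the correction, I would proceed by a secondary induction on the number of operations in $\cali'$ that fail to annihilate $z$, or equivalently set up the standard spectral-sequence/filtration argument underlying Margolis's criterion: filter $\Lambda^\bullet(W)\otimes S^\bullet(W)$ and observe that, since the $Q_i$ all commute up to sign and each individual Koszul complex is acyclic (away from the cokernel term, which is avoided here because $a > 0$ forces us into the acyclic range), the iterated complex $\big(\Lambda^\bullet(W) \otimes S^\bullet(W), Q_{i_1}, \ldots, Q_{i_k}\big)$ is a tensor product (over the relevant graded pieces) of acyclic complexes and hence itself acyclic in the range under consideration. Thus the homology class of $x$ with respect to the total differential vanishes, which is precisely the assertion that $x = (\prod_{i \in \cali} Q_i) y$ for some $y$ of the stated bidegree. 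Finally, the "in particular" clause is immediate from the bidegree constraints on $y$: if $a + |\cali| > \dim W$ then $\Lambda^{a+|\cali|}(W) = 0$, and if $b < \sum_{i \in \cali} |Q_i|$ then $S^{b - \sum_{i\in\cali}|Q_i|}(W) = 0$, so in either case $y = 0$ and hence $x = 0$.
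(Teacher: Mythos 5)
Your base case and the overall shape of the argument (induction on $|\cali|$; produce $z$ with $x = Q_{i_0}z$; correct $z$ so that the remaining operations kill it) agree with the paper, and you correctly flag the correction step as the crux. But that is exactly where your proof has a genuine gap, and the justification you offer does not hold up. The ``tensor product of acyclic complexes'' claim fails on two counts: the individual Koszul complexes $(\Lambda^\bullet(W)\otimes S^\bullet(W), Q_i)$ are \emph{not} acyclic --- each has Margolis homology $\Strunc{*}{i}$ concentrated in exterior degree $0$, which is why $H\field_p^*(BV_+)$ is not free over $\Lambda(Q_i \mid i \in \cali)$ --- and there is no tensor decomposition indexed by $i \in \cali$, since every $Q_i$ lowers the \emph{same} exterior grading, so no K\"unneth argument is available. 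Moreover, even granting acyclicity of a total complex with differential $\sum_i Q_i$, that is not ``precisely the assertion that $x = (\prod_{i\in\cali}Q_i)y$'': the statement to be proved is that the socle $\bigcap_i \mathrm{Ker}(Q_i)$ coincides with $\mathrm{Im}(\prod_i Q_i)$ in exterior degrees $a>0$, a freeness-type (Margolis) statement, which is a different condition from the vanishing of total-complex homology. Your fallback, a ``secondary induction on the number of operations in $\cali'$ that fail to annihilate $z$,'' is not set up to terminate either: correcting $z$ so that one further $Q_j$ kills it can destroy the property for operations already handled, and the natural correction term is itself an instance of the proposition for a two-element index set, so without an auxiliary induction the argument is circular.

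What is missing is the double induction the paper runs: induct simultaneously on $|\cali|$ \emph{and} on the internal degree $|x|$, order $\cali = \{i_1 < \cdots < i_t\}$, peel off the \emph{smallest} index $i_1$, and construct by descending induction on $k$ elements $\alpha_k$ satisfying $(\prod_{i\in\cali\setminus\{i_1\}}Q_i)\alpha_k = x$ and $(\prod_{1\le s\le k}Q_{i_s})\alpha_k = 0$. The correction terms have the form $Q_{i_k}\gamma_k$ with $Q_{i_k}$ occurring in the product $\prod_{i\in\cali\setminus\{i_1\}}Q_i$, so they do not disturb the first condition; and the initial obstruction $\beta_t$ has degree $|x| - (|Q_{i_t}| - |Q_{i_1}|) < |x|$ precisely because $i_1$ is minimal, which is what allows the degree induction to close at that step. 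Without this bookkeeping the correction you describe does not go through.
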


\begin{proof}
 The proof is by  induction on $|\cali|$, with an internal induction on
$|x|$, where the degree of an element of $\Lambda^a(W) \otimes S^b(W)$ is
$a+2b$. For $|\cali|=1$, the result holds by  Lemma \ref{lem:Koszul_acyclic};
the initial step of the $|x|$ induction is a straightforward consequence of
connectivity, since the degree of elements is non-negative. 

For the inductive step of the degree induction,  consider $x \in \Lambda^a(W)
\otimes S^b(W)$ and $\cali= \{ i_1 < i_2 <
\ldots < i_t \}$ as in the statement, supposing that the result holds for all
indexing sets $\mathscr{J}$ with $|\mathscr{J}|< |\cali|=t$ and for such
elements of degree  $< |x|$. 

To prove the result, it is sufficient to construct elements $\alpha_k \in
\Lambda^{a + t-1}(W) \otimes S^{b-\sum_{i \in
\cali\backslash\{i_1\} } |Q_i|} (W)$, for $t \geq k \geq 1$ which satisfy the
following properties
 \begin{eqnarray}
 \label{eqn:cond1}
(\prod_{i \in \cali\backslash\{i_1\} } Q_i ) \alpha_k &=& x\\
\label{eqn:cond2}
(\prod_{1 \leq s \leq k } Q_{i_s}) \alpha_k &=&0. 
 \end{eqnarray}

Indeed,  the element $\alpha_1$ then satisfies $Q_{i_1} \alpha_1=0$, so that
acyclicity of the complex of Lemma \ref{lem:Koszul_acyclic} implies the
existence of  $y$ such that $Q_{i_1} y =\alpha_1$. By condition
(\ref{eqn:cond1}) for $\alpha_1$, this satisfies  $ x = (\prod_{i \in \cali} Q_i
) y$, as required.
 
The construction of the $\alpha_k$ is by descending induction on $k$; by
induction upon $|\cali|$, there exists an element $\alpha_t$ such that $x=
(\prod_{i \in \cali\backslash\{i_1\} } Q_i ) \alpha_t$. Since $Q_{i_1} x=0$, by
hypothesis, the second condition required of $\alpha_t$ also holds, so this
forms the initial step of the descending induction.

For the inductive step ($t \geq k >1)$, consider $\alpha_k$ and form the element
$ \beta_k :=
(\prod_{1 \leq s < k } Q_{i_s}) \alpha_k$, which is the obstruction to
$\alpha_k$ being taken for $\alpha_{k-1}$. 
In the case $k=t$,  $|x|- |\beta_t|=|Q_{i_t}| - |Q_{i_1}|>0$.

Condition (\ref{eqn:cond2}) for  $\alpha_k$ implies that $Q_{i_s} \beta_k=0$, 
for
$1 \leq s \leq k$. Hence, the global inductive hypothesis in the proof of the
theorem yields an element $\gamma_k$
such that $(\prod_{1 \leq s \leq k} Q_{i_s} ) \gamma_k = \beta_k$ (for $k=t$,
this is induction on the degree, using the fact that $|\beta_t|<|x|$, and, for
$k<t$, induction on $|\cali|$).

Taking $\alpha_{k-1} := \alpha_k - Q_{i_k} \gamma_k$, the required conditions
are satisfied, completing the inductive step. 
\end{proof}

\begin{rem}
As suggested by the referee, it is interesting no observe the following consequence: 
  $ H\field_p ^* (BV_+) \cong 
\Lambda^*(V^\sharp) \otimes S^*(V^\sharp )$ contains no free $\Lambda (Q_i|i \in 
\cali)$-submodule  if $|\cali| > \dim V$. In the case $ |\cali| = \dim V$, $\Lambda^{ |\cali|} (V^\sharp)$ 
is one dimensional and the free $\Lambda (Q_i|i \in 
\cali)$-module  summand of  $ H\field_p ^* (BV_+)$ is generated by $\Lambda^{ |\cali|} (V^\sharp) \otimes S^* (V^\sharp)$.
 
 The reader may wish to compare Proposition \ref{prop:multi-koszul} with the analysis of the stable 
summands of $\Sigma^\infty BV_+$ which have cohomology that is free over 
$\Lambda (Q_i|i \in \cali)$; here  Margolis's criterion can be applied directly, as in \cite{CK}.
\end{rem}

\subsection{The structure of the $\bpn$-cohomology of elementary abelian
$p$-groups}

The description of $\bpn^* (BV)$ is obtained by applying Theorem
\ref{thm:general}.
 The required property of  the torsion of $\bpn^{\mathrm{odd}} (BV)$ is provided
by the
following:
 
 \begin{prop}
 \label{prop:noeth_strickland}
\cite[Proposition 2.3]{Str}
For $V$ an elementary abelian $p$-group of rank $d\leq n+1$, 
$
 \bpn^* (BV_+) 
$
is a Noetherian algebra concentrated in even degrees, which has no $p$-torsion
if $d < n+1$.
 \end{prop}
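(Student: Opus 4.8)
The plan is to reduce everything to Landweber's description of $BP^*(BV_+)$, together with the observation that for $\mathrm{rank}\,V \le n+1$ the passage to $\bpn$ is a naive quotient. Write $d$ for the rank of $V$. I would isolate two claims: \textbf{(A)} the elements $v_{n+1}, v_{n+2}, \ldots$ act as a regular sequence on $BP^*(BV_+)$; and \textbf{(B)} multiplication by $p$ is injective on $\bpn^*(BV_+)$ when $d < n+1$.

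Granting \textbf{(A)}: from the long exact sequences of the cofibre sequences (\ref{eqn:cofib_BPn}) (for $m > n$), Lemma \ref{lem:q_vn-torsion} shows that the failure of $\rho^m_{m-1}$ to be surjective on $(-)^*(BV_+)$ is measured by $\mathrm{Ker}(v_m)$, which vanishes by \textbf{(A)}; hence $\rho_n$ is surjective and moreover $\bpn^*(BV_+) \cong BP^*(BV_+)/(v_{n+1}, v_{n+2}, \ldots)$ (iterating the tower degree by degree, which is legitimate since $|v_m| \to \infty$). By Landweber, $BP^*(BV_+) \cong BP^*[[x_1, \ldots, x_d]]/([p](x_1), \ldots, [p](x_d))$ with $|x_i| = 2$ and $[p](x)$ the $p$-series of the universal $p$-typical formal group law; therefore $\bpn^*(BV_+) \cong \bpn^*[[x_1, \ldots, x_d]]/([p]_n(x_1), \ldots, [p]_n(x_d))$, where $[p]_n$ is the reduction of $[p]$ to $\bpn^*$. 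Since $\bpn^*$ is Noetherian, so is this power-series quotient; and it is concentrated in even degrees, all of $x_1, \ldots, x_d, v_1, \ldots, v_n$ lying in even degree.

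The heart of the matter is \textbf{(A)}, and this is where the hypothesis $d \le n+1$ is used. I would appeal to Landweber's filtration theorem for the $BP^*BP$-comodule $BP^*(BV_+)$ (taken in the completed framework of \cite{Board}, with the care signalled before Proposition \ref{prop:BP_surjectivity}), which yields a finite filtration with subquotients of the form $BP^*/(p, v_1, \ldots, v_{k-1})$, $k \le d$. On each such subquotient $BP^*/(p, v_1, \ldots, v_{k-1}) \cong \field_p[v_k, v_{k+1}, \ldots]$ the elements $v_{n+1}, v_{n+2}, \ldots$ are a regular sequence, since $k \le d \le n+1$ means they are among the polynomial generators; and regularity of a sequence passes up a finite filtration with regular subquotients. (An alternative, comodule-free route verifies \textbf{(A)} directly from the presentation by induction on $d$ via the Künneth isomorphism $BP^*(BV_+) \cong BP^*(B\zed/p_+) \,\widehat{\otimes}_{BP^*}\, BP^*(BV'_+)$ for $V = \zed/p \times V'$, using the explicit shape $[p](x) = px +_F v_1 x^p +_F v_2 x^{p^2} +_F \cdots$ of the $p$-series.) Making either argument rigorous — in particular controlling the completions inherent to $BP^*(BV_+)$, which is not finitely generated over $BP^*$ — is the main obstacle.

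Finally, for \textbf{(B)} with $d \le n$: using the presentation from \textbf{(A)}, $p$ acts injectively on $\bpn^*(BV_+) = \bpn^*[[x_1, \ldots, x_d]]/([p]_n(x_i))$ if and only if the reductions $\overline{[p]_n(x_i)} \equiv v_1 x_i^p +_F v_2 x_i^{p^2} +_F \cdots +_F v_n x_i^{p^n}$ form a regular sequence in $\field_p[v_1, \ldots, v_n][[x_1, \ldots, x_d]]$ — indeed, $([p]_n(x_i))$ is generated by a regular sequence over $\bpn^*[[x_1, \ldots, x_d]]$, so the Koszul complex computes $\mathrm{Tor}$, and $\mathrm{Ker}(p \mid \bpn^*(BV_+))$ is the first Koszul homology of $\overline{[p]_n(x_1)}, \ldots, \overline{[p]_n(x_d)}$. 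This regularity I would verify by a dimension count: the coefficients of $\overline{[p]_n(x_i)}$ lie in $(v_1, \ldots, v_n)$, and after successively killing $v_1, \ldots, v_{k}$ the $i$-th relation acquires leading term $v_{k+1} x_i^{p^{k+1}}$ with $v_{k+1}$ a fresh generator, so that for $d \le n$ the Krull dimension drops by the full $d$. At $d = n+1$ this fails — the fibre over $v_1 = \cdots = v_n = 0$ is all of $\field_p[[x_1, \ldots, x_{n+1}]]$, of dimension $n+1 > n$ — which is why the strict inequality is genuinely needed.
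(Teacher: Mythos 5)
The paper offers no proof of this statement: it is imported verbatim from Strickland \cite[Proposition 2.3]{Str}, so there is nothing internal to compare against. Your sketch is essentially a reconstruction of Strickland's own argument — Landweber's presentation $BP^*(BV_+)\cong BP^*[[x_1,\dots,x_d]]/([p](x_i))$, regularity of the $v_m$ for $m>n$ to descend the tower via the sequences (\ref{eqn:ses_fund}), and a Koszul/regular-sequence computation of $\mathrm{Ker}(p)$ — and the overall architecture, including the identification of $d=n+1$ as the exact boundary for $p$-torsion, is sound.

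Two points deserve flagging. First, the route to \textbf{(A)} via the Landweber filtration theorem does not go through as stated: that theorem applies to finitely presented $BP_*BP$-comodules, and $BP^*(BV_+)$ is not finitely generated over $BP^*$ (the filtration results of Johnson--Wilson concern the \emph{homology} $BP_*(BV)$, and dualizing is not formal). You acknowledge this, but it means your ``alternative, comodule-free route'' — regularity of $[p](x_1),\dots,[p](x_d),v_{n+1},v_{n+2},\dots$ verified directly from the presentation, using that modulo $(p,v_1,\dots,v_{k-1})$ the $p$-series has Weierstrass leading term $v_kx^{p^k}$ — is not an alternative but the only viable path, and it is the one Strickland takes. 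Second, for the descent down the tower you need $v_m$ to act injectively on $BP\langle m\rangle^*(BV_+)\cong BP^*(BV_+)/(v_{m+1},\dots,v_M)$ in each fixed degree, i.e.\ the sequence taken in \emph{descending} order $v_M,v_{M-1},\dots,v_{n+1}$; this is harmless (finite regular sequences in this graded-complete Noetherian setting may be permuted), but it should be said, since ``$v_{n+1},v_{n+2},\dots$ is a regular sequence'' in the ascending order is not literally the hypothesis the long exact sequences consume. With those repairs the argument is complete and matches the cited source.
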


\begin{nota}
Write $\double H\field_p^* (BV) \subset
H\field_p^* (BV)$ for the augmentation ideal of the polynomial subalgebra if $p$
is odd and for the double $\Phi H\field_2^* (BV)$ if $p=2$. Thus, $\double H
\field_p^* (BV)$ coincides with the image of $\rho_{-1}$. 
\end{nota}

\begin{thm}
\label{thm:eltab}
 For $V$  an elementary abelian $p$-group of finite rank and $j \in \nat$, 
 the following statements hold:
\begin{enumerate}
 \item 
$\vtor[j]$ is a trivial $\bpn[j]^*$-module which identifies as:
 \begin{eqnarray*}
\vtor[j] & \cong &\mathrm{Im} (q_j \ldots
q_0) \subset \bpn[j]^* (BV)
\\
&\cong &
\mathrm{Im} (Q_0 \ldots
Q_j) \subset H\field_p^* (BV)
\end{eqnarray*}
and, in particular, $\bpn[j]^\mathrm{odd} (BV) \cong \mathrm{Im}(Q_0\ldots
Q_j)^\mathrm{odd}$.
\item
The reduction map $\rho^j_{-1}$ induces an isomorphism 
\[
 \field_p \otimes _{\bpn[j]^*} \bpn[j]^* (BV) \cong \bigcap _{i=0}^j
\mathrm{Ker}(Q_i) \subset H\field_p^* (BV). 
\]
\item
\label{item:reduction1}
 The reduction map $\rho_j$ induces a monomorphism 
  \[
   \bpn[j]^* \otimes_{BP^*} BP^* (BV) \hookrightarrow \bpn[j]^* (BV) 
  \]
which is an isomorphism modulo $v_j$-torsion and, is an isomorphism
for $j> \mathrm{rank}(V)$; in particular, 
$$\bpn[j] ^* (BV) \Big[\frac{1}{v_j}\Big] \cong
\bpn[j]^*\Big[\frac{1}{v_j}\Big]  \otimes _{BP^* } BP^* (BV).$$
\item
\label{item:reduction2}
The reduction map $\rho_j$ and localization induces a monomorphism 
\[
 \bpn[j]^* (BV) \hookrightarrow H\field_p^* (BV) \oplus 
\big(\bpn[j]^*\Big[\frac{1}{v_j}\Big]  \otimes
_{BP^*
} BP^* (BV) \big). 
\]
\item
The  morphism $\tilde{\sigma}_j$ induces a short exact sequence
\[
0\rightarrow
L_j \hookrightarrow  \big(\bpn[j]^* \otimes_{BP^*} BP^* (BV)\big) \oplus
\vtor[j] \twoheadrightarrow \bpn[j]^*
(BV) \rightarrow 0
\]
where $L_j$  is isomorphic to $\double H \field_p^* (BV) \cap \mathrm{Im} (Q_0
\ldots Q_j) \subset H\field_p^* (BV)$.
\end{enumerate}
\end{thm}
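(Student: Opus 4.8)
The plan is to obtain the short exact sequence from Corollary \ref{cor:surjectivity_sigma_tilde} and Proposition \ref{prop:BP-base-change} applied to $X=BV$, and then to pin down the kernel $L_j$ by transporting it through the reduction maps into $H\field_p^*(BV)$. Since the statement concerns one $j$, I would fix $V$ of rank $d$ and work at an auxiliary level $n\ge\max(j,d+1)$, at which the hypotheses of Theorem \ref{thm:general} hold for $BV$: $BP^{\mathrm{odd}}(BV)=0$ (Landweber) and $\bpn^*(BV_+)$ is Noetherian, concentrated in even degrees, by Proposition \ref{prop:noeth_strickland} (as $d\le n+1$); $\bpn[0]^*(BV)=H\zed_{(p)}^*(BV)$ embeds in $H\field_p^*(BV)$ with image $\mathrm{Im}(Q_0)$ since $H\field_p^*(BV)$ is $Q_0$-acyclic (the case $\cali=\{0\}$ of Proposition \ref{prop:multi-koszul}), via Remark \ref{rem:Q_0_hypotheses}(2) and Proposition \ref{prop:indeterminacy}; and $\hq^*(BV_+,j')^{\mathrm{odd}}=0$ for $0\le j'\le n$ by Theorem \ref{thm:multikoszul}, which also yields $\bpn[j']^*(BV)\twoheadrightarrow\hq^*(BV_+,j')$. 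Recording in addition the classical fact that for $n$ large $\rho_n$ induces an isomorphism $\bpn^*\otimes_{BP^*}BP^*(BV)\cong\bpn^*(BV)$ (Landweber; Johnson--Wilson; cf.\ \cite{Str}), Corollary \ref{cor:surjectivity_sigma_tilde} and Proposition \ref{prop:BP-base-change} apply. These already give items (1)--(4) of the theorem and, for $0\le j\le n$: the short exact sequence asserted in (5); the monomorphism $\phi_j\colon\bpn[j]^*\otimes_{BP^*}BP^*(BV)\hookrightarrow\bpn[j]^*(BV)$ induced by $\rho_j$; and the identification (Theorem \ref{thm:general}) of $\vtor[j]$ as a trivial $\bpn[j]^*$-module carried by $\rho^j_{-1}$ isomorphically onto $\mathrm{Im}(Q_0\ldots Q_j)\subset H\field_p^*(BV)$.

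It then remains to identify $L_j$, and here is the core. Because $\phi_j$ is injective, projection to the torsion summand restricts to an isomorphism $L_j\xrightarrow{\cong}\vtor[j]\cap\mathrm{Im}(\rho_j)\subseteq\bpn[j]^*(BV)$ (the composite $BP^*(BV)\twoheadrightarrow\bpn[j]^*\otimes_{BP^*}BP^*(BV)\xrightarrow{\phi_j}\bpn[j]^*(BV)$ being $\rho_j$). I would then apply $\rho^j_{-1}$, which is injective on $\vtor[j]$, sends $\vtor[j]$ onto $\mathrm{Im}(Q_0\ldots Q_j)$ and $\mathrm{Im}(\rho_j)$ onto $\mathrm{Im}(\rho^j_{-1}\rho_j)=\mathrm{Im}(\rho_{-1})=\double H\field_p^*(BV)$; thus it embeds $\vtor[j]\cap\mathrm{Im}(\rho_j)$ into $\double H\field_p^*(BV)\cap\mathrm{Im}(Q_0\ldots Q_j)$, and the only thing to check is surjectivity here. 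Given $y$ in the target, there are a unique $a\in\vtor[j]$ with $\rho^j_{-1}(a)=y$ and some $b\in BP^*(BV)$ with $\rho_{-1}(b)=y$, so $\rho_j(b)-a\in\mathrm{Ker}(\rho^j_{-1})$; but $\mathrm{Ker}(\rho^j_{-1})=I\cdot\bpn[j]^*(BV)$ with $I$ the augmentation ideal of $\bpn[j]^*$ (since $\bpn[j]^*(BV)/I\cdot\bpn[j]^*(BV)=\field_p\otimes_{\bpn[j]^*}\bpn[j]^*(BV)$ embeds in $H\field_p^*(BV)$ by Theorem \ref{thm:general}), and $I\cdot\bpn[j]^*(BV)\subseteq\mathrm{Im}(\rho_j)$ because the cokernel of $\rho_j$ is a quotient of the trivial $\bpn[j]^*$-module $\vtor[j]$ via $\tilde\sigma_j$, hence trivial, so killed by $I$. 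Therefore $a\in\mathrm{Im}(\rho_j)\cap\vtor[j]$ and $y$ lies in the image, giving $L_j\cong\vtor[j]\cap\mathrm{Im}(\rho_j)\cong\double H\field_p^*(BV)\cap\mathrm{Im}(Q_0\ldots Q_j)$. The prime $2$ is handled identically, with $\double H\field_2^*(BV)=\Phi H\field_2^*(BV)$ the subalgebra of squares, again annihilated by every $Q_i$ and equal to $\mathrm{Im}(\rho_{-1})$. As $j$ and $V$ were arbitrary, this proves (5).

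I expect the main obstacle to be organisational rather than structural: marshalling and verifying all the hypotheses of Theorem \ref{thm:general}, Corollary \ref{cor:surjectivity_sigma_tilde} and Proposition \ref{prop:BP-base-change} for $BV$ — in particular the base-change isomorphism at large $n$ and the $Q_0$-acyclicity input — after which the identification of $L_j$ is the short argument above, whose single delicate ingredient is the containment $\mathrm{Ker}(\rho^j_{-1})\subseteq\mathrm{Im}(\rho_j)$, resting on the triviality of the $v_j$-torsion. If that step is awkward to phrase, an alternative is to use the recursive description from Proposition \ref{prop:BP-base-change} — $L_n=\vtor$ and $0\to L_j\to L_{j-1}\to\bigl(\mathrm{Ker}(Q_j)\cap\mathrm{Im}(Q_0\ldots Q_{j-1})\bigr)/\mathrm{Im}(Q_0\ldots Q_j)\to 0$ — noting $\mathrm{Im}(Q_0\ldots Q_n)=0$ for $n>d$ (Proposition \ref{prop:multi-koszul}), so $L_n=0$, and checking that $M_j:=\double H\field_p^*(BV)\cap\mathrm{Im}(Q_0\ldots Q_j)$ obeys the same recursion with the same base value: $M_j=M_{j-1}\cap\mathrm{Im}(Q_0\ldots Q_j)$, $M_{j-1}\subseteq\mathrm{Ker}(Q_j)$ since $Q_j$ kills $\double H\field_p^*(BV)$, and $M_{j-1}+\mathrm{Im}(Q_0\ldots Q_j)=\mathrm{Ker}(Q_j)\cap\mathrm{Im}(Q_0\ldots Q_{j-1})$ by Theorem \ref{thm:multikoszul}, since any element of the right side lies in $\bigcap_{i\le j}\mathrm{Ker}(Q_i)$ and so is congruent modulo $\mathrm{Im}(Q_0\ldots Q_j)$ to a class from the polynomial subalgebra, i.e.\ from $\double H\field_p^*(BV)$. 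A downward induction on $j$ comparing graded dimensions (all modules being trivial $\bpn[j]^*$-modules of finite type) then gives $L_j\cong M_j$.
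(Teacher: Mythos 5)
Your proposal is correct, and for items (1)--(4) it takes the same route as the paper: verify the hypotheses of Theorem \ref{thm:general} for $BV$ via Theorem \ref{thm:multikoszul} and Proposition \ref{prop:noeth_strickland} (plus the classical inputs on $BP^*(BV)$ and the base-change isomorphism for $n\gg 0$), then invoke Theorem \ref{thm:general}, Corollary \ref{cor:surjectivity_sigma_tilde} and Proposition \ref{prop:BP-base-change}. Where you genuinely diverge is the identification of $L_j$ in item (5). The paper argues by downward induction on $j$, starting from $L_j=0$ for $j>\mathrm{rank}(V)$ and matching the recursion $0\to L_j\to L_{j-1}\to\big(\mathrm{Ker}(Q_j)\cap\mathrm{Im}(Q_0\ldots Q_{j-1})\big)/\mathrm{Im}(Q_0\ldots Q_j)\to 0$ of Proposition \ref{prop:BP-base-change} against the analogous recursion for $M_j:=\double H\field_p^*(BV)\cap\mathrm{Im}(Q_0\ldots Q_j)$, using a cartesian square built from the surjection $\double H\field_p^*(BV)\twoheadrightarrow\hq^*(BV,j)$ supplied by Theorem \ref{thm:multikoszul} --- this is exactly your fallback argument. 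Your primary argument is instead direct and non-inductive: since the base-change map $\bpn[j]^*\otimes_{BP^*}BP^*(BV)\to\bpn[j]^*(BV)$ is injective, $L_j\cong\vtor[j]\cap\mathrm{Im}(\rho_j)$, and $\rho^j_{-1}$ carries this intersection isomorphically onto $\double H\field_p^*(BV)\cap\mathrm{Im}(Q_0\ldots Q_j)$; the only nontrivial point is surjectivity, which you reduce to the containments $\mathrm{Ker}(\rho^j_{-1})=I\cdot\bpn[j]^*(BV)\subseteq\mathrm{Im}(\rho_j)$, both of which follow from conclusions of Theorem \ref{thm:general} (injectivity of $\field_p\otimes_{\bpn[j]^*}\bpn[j]^*(BV)\to H\field_p^*(BV)$ gives the equality, and triviality of $\vtor[j]$ forces the cokernel of $\rho_j$ to be a trivial module, giving the inclusion). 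I checked this and it is sound; it buys a canonical, one-step identification of $L_j$ as a subobject of $H\field_p^*(BV)$ rather than an isomorphism produced by comparing two filtrations, at the cost of leaning more directly on the fact that $\mathrm{Im}(\rho_{-1})=\double H\field_p^*(BV)$, which both you and the paper take as known from the Notation preceding the theorem.
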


\begin{proof}
 The first two statements follow from  Theorem \ref{thm:general}, using Theorem
\ref{thm:multikoszul} and Proposition \ref{prop:noeth_strickland} to show that
the hypotheses are satisfied. Statements (\ref{item:reduction1}) and
(\ref{item:reduction2}) follow from Theorem \ref{thm:general} and Proposition
\ref{prop:BP-base-change}.

Finally, the identification of $L_j$ follows by analysing the information
furnished in Proposition \ref{prop:BP-base-change}, using the fact that the
image of $L_j$ in
$\vtor[j] \cong \mathrm{Im}(Q_0 \ldots Q_j) \subset H\field_p^* (BV)$ also lies
in 
$\double := \double H \field_p^* (BV)$. Namely, the proof is by downward 
induction on $j$, starting from $j>
\mathrm{rank}(V)$, for which the result is clear.  Theorem \ref{thm:multikoszul}
implies that 
$\double \hookrightarrow H \field_p^* (BV)$ induces a surjection $\double
\twoheadrightarrow \hq^* (BV, j)$, with kernel $\double \cap \mathrm{Im} (Q_0
\ldots  Q_j)$. 
The inductive step follows from the observation that the  square below is
cartesian:
\[
 \xymatrix{
 & \double \cap \mathrm{Im} (Q_0 \ldots  Q_{j-1})
 \ar@{^(->}[r]
 \ar@{->>}[d]
 &
 \double 
 \ar@{->>}[d]
 \\
 L_{j-1}/ L_j 
 \ar@{}[r]|(.25)\cong 
 &
 \big( \mathrm{Ker}(Q_j) \cap \mathrm{Im} (Q_0\ldots Q_{j-1})\big) /
\mathrm{Im}(Q_0 \ldots Q_j) 
  \ar@{^(->}[r]
  &
  \hq^* (BV, j),
 }
\]
where the isomorphism is given by Proposition \ref{prop:BP-base-change}.
\end{proof}

\begin{rem}
The method of proof applies {\em mutatis mutandis} to any spectrum 
constructed from $BP$ by forming the quotient by a cofinite subset of a 
suitable 
set of generators  $\{ v_i
| i \geq 0 \}$ of $BP_*$. 
\end{rem}

Theorem \ref{thm:eltab} yields the following precise description of the failure 
of 
surjectivity of the reduction map $\rho_j$ for the cohomology of $BV$, 
a far-reaching generalization of the result of Strickland \cite{Str}.

\begin{cor}
\label{cor:spectra_detection}
For $V$ an elementary abelian $p$-group of finite rank and $j\in \nat$, the
morphism 
$
 \xymatrix{
 BP \amalg \Sigma^{\sum |Q_i|} H \field_p 
 \ar[rr]^(.6){(\rho_j, q_j\ldots q_0)} 
 &
\  &
\bpn[j]
 }
$
induces a surjection 
\[
 BP^* (BV) \oplus H\field_p^{* -\sum |Q_i|}(BV)
 \twoheadrightarrow 
\bpn[j]^* (BV).
 \]
\end{cor}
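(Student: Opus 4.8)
The plan is to derive Corollary~\ref{cor:spectra_detection} directly from Theorem~\ref{thm:eltab} together with Corollary~\ref{cor:surjectivity_sigma_tilde}. First I would verify that the hypotheses of Theorem~\ref{thm:general} are satisfied for $X = BV$ (more precisely, for a rank $\leq n+1$ quotient situation, but the statement here is for fixed $j$, so one works with $\bpn[j]$ in the role of $\bpn$): the conditions $BP^{\mathrm{odd}}(BV)= \bpn[j]^{\mathrm{odd}}(BV)=0$, the Noetherian property, the monomorphism $\bpn[0]^*(BV)\hookrightarrow H\field_p^*(BV)$ with image $\mathrm{Im}(Q_0)$, and the vanishing $\hq(BV,i)^{\mathrm{odd}}=0$ for $0\leq i\leq j$. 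These are exactly what was checked in the proof of Theorem~\ref{thm:eltab}, invoking Theorem~\ref{thm:multikoszul} (for the $\hq$-vanishing and for surjectivity onto $\hq^*(BV,i)$) and Proposition~\ref{prop:noeth_strickland} (for the Noetherian and concentration-in-even-degrees statements). In particular $\bpn[j]^{\mathrm{odd}}(BV)=0$ because $H\field_p^{\mathrm{odd}}(BV)$-contributions to $\bpn[j]$ are controlled and, by Theorem~\ref{thm:eltab}(1), $\bpn[j]^{\mathrm{odd}}(BV)\cong \mathrm{Im}(Q_0\ldots Q_j)^{\mathrm{odd}}$; one must be slightly careful about what ``odd'' means here, but the point is that $\bpn[j]^*(BV)$ is accessible.

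The second step is to establish the surjectivity hypothesis needed for Corollary~\ref{cor:surjectivity_sigma_tilde}, namely that $\rho_n: BP^*(BV)\twoheadrightarrow \bpn[j]^*(BV)$ is surjective — wait, rather $\rho_j$ in our indexing — or, failing that, to use Proposition~\ref{prop:BP_surjectivity} directly. Here the key input is Theorem~\ref{thm:eltab}(5): the short exact sequence
\[
0\rightarrow L_j \hookrightarrow \big(\bpn[j]^*\otimes_{BP^*}BP^*(BV)\big)\oplus \vtor[j]\twoheadrightarrow \bpn[j]^*(BV)\rightarrow 0
\]
shows that $\tilde{\sigma}_j: BP^*(BV)\oplus\vtor[j]\twoheadrightarrow \bpn[j]^*(BV)$ is surjective, since $\bpn[j]^*\otimes_{BP^*}BP^*(BV)$ is a quotient of $BP^*(BV)$. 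Now I would combine this with the identification $\vtor[j]\cong \mathrm{Im}(q_j\ldots q_0)\subset \bpn[j]^*(BV)$ from Theorem~\ref{thm:eltab}(1): the summand $\vtor[j]$ in $\tilde{\sigma}_j$ is precisely the image of the composite $q_j\ldots q_0: H\field_p^{*-\sum|Q_i|}(BV)\rightarrow \bpn[j]^*(BV)$. Indeed by Lemma~\ref{lem:Q_compatible} the operation $q_j\ldots q_0$ on $H\field_p$ hits $\mathrm{Im}(q_j\ldots q_0)$, and surjectivity of $q_j\ldots q_0: H\field_p^*(BV)\twoheadrightarrow \mathrm{Im}(q_j\ldots q_0)$ is immediate since this is by definition the image. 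Therefore replacing the abstract $\vtor[j]$-summand of $\tilde{\sigma}_j$ by $H\field_p^{*-\sum|Q_i|}(BV)$ via $q_j\ldots q_0$ preserves surjectivity, yielding exactly the claimed surjection
\[
BP^*(BV)\oplus H\field_p^{*-\sum|Q_i|}(BV)\twoheadrightarrow \bpn[j]^*(BV)
\]
induced by $(\rho_j, q_j\ldots q_0)$.

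Finally I would note that the map $BP\amalg \Sigma^{\sum|Q_i|}H\field_p \xrightarrow{(\rho_j,q_j\ldots q_0)} \bpn[j]$ realizes this sum-of-maps on cohomology (the coproduct in spectra gives the product of cohomology groups, which for a single space is the direct sum after reduction), so the statement follows. The main obstacle, such as it is, is bookkeeping: one must check that the $\vtor[j]$-part of $\tilde\sigma_j$ genuinely factors through $q_j\ldots q_0$ on mod-$p$ cohomology rather than merely being isomorphic to a subquotient of it — this is where Theorem~\ref{thm:eltab}(1)'s identification $\vtor[j]\cong\mathrm{Im}(q_j\ldots q_0)$ (as opposed to just $\cong\mathrm{Im}(Q_0\ldots Q_j)$) is doing the real work, since it pins down the summand as literally the image of the spectrum-level operation. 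Everything else is assembling conclusions already in hand. Strictly speaking this corollary is a restatement of Corollary~\ref{cor:surjectivity_sigma_tilde} once one observes that the hypothesis ``$\rho_j$ surjective'' can be dropped — it is not needed, since $\bpn[j]^*\otimes_{BP^*}BP^*(BV)$ being a quotient of $BP^*(BV)$ already suffices for the surjectivity of $\tilde\sigma_j$ in Proposition~\ref{prop:BP_surjectivity}, given the other conclusions of Theorem~\ref{thm:eltab}.
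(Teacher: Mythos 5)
Your argument is correct and is essentially the paper's: both reduce to the surjectivity of $\tilde{\sigma}_j$ (which you extract from Theorem \ref{thm:eltab}(5), while the paper invokes Corollary \ref{cor:surjectivity_sigma_tilde} together with \cite[Proposition 2.3]{Str} to handle $n\gg 0$) combined with the identification $\vtor[j]\cong\mathrm{Im}(q_j\ldots q_0)\subset \bpn[j]^*(BV)$ from Theorem \ref{thm:eltab}(1). The only flaw is your closing remark: the hypothesis that $\rho_n$ is surjective for $n\gg 0$ is not genuinely dispensable, since the surjection in Theorem \ref{thm:eltab}(5) that you take as input is itself established (via Proposition \ref{prop:BP_surjectivity} and Proposition \ref{prop:BP-base-change}) from exactly that hypothesis, supplied by Strickland's result; your observation merely relocates where it is used rather than removing it.
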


\begin{proof}
Follows from Corollary \ref{cor:surjectivity_sigma_tilde}, using 
 \cite[Proposition 2.3]{Str} to treat the cases $n \gg 0$.
\end{proof}


\providecommand{\bysame}{\leavevmode\hbox to3em{\hrulefill}\thinspace}
\providecommand{\MR}{\relax\ifhmode\unskip\space\fi MR }
\providecommand{\MRhref}[2]{%
  \href{http://www.ams.org/mathscinet-getitem?mr=#1}{#2}
}
\providecommand{\href}[2]{#2}

\end{document}